\theoremstyle{plain}
\newtheorem{thm}{Theorem}
\newtheorem{lem}[thm]{Lemma}
\newtheorem{prop}[thm]{Proposition} 
\newtheorem{remark}[thm]{Remark}
\tikzset{->-/.style={decoration={
  markings,
  mark=at position #1 with {\arrow{>}}},postaction={decorate}}}
	\tikzset{-<-/.style={decoration={ 
  markings, 
  mark=at position #1 with {\arrow{<}}},postaction={decorate}}}
\newcommand{\RR}{\mathbb{R}}
\newcommand{\CC}{\mathbb{C}}
\newcommand{\abs}[1]{\left\vert #1 \right\vert}
\newcommand{\norm}[1]{\left\Vert #1 \right\Vert}
\newcommand{\paren}[1]{\left( #1 \right)}
\newcommand{\sign}{\operatorname{sign}}
\newcommand{\DEL}[1]{}
\providecommand{\N}{\mathbb{N}}  
\providecommand{\R}{\mathbb{R}}
\providecommand{\Z}{\mathbb{Z}}
\providecommand{\C}{\mathbb{C}}
\providecommand{\T}{\mathbb{T}}
\providecommand{\eps}{\varepsilon}
\providecommand{\ov}{\overline}
\DeclareMathOperator{\Id}{Id}
\DeclareMathOperator{\Real}{Re}
\DeclareMathOperator{\spann}{span}
\DeclareMathOperator{\rg}{Rg}
\DeclareMathOperator{\Kern}{Ker}
\providecommand{\N}{\mathbb{N}}
\providecommand{\R}{\mathbb{R}}
\providecommand{\Z}{\mathbb{Z}}
\providecommand{\C}{\mathbb{C}}
\providecommand{\eps}{\varepsilon}
\providecommand{\ov}{\overline}
\DeclareMathOperator{\sech}{sech}
\renewcommand{\qed}{\hfill $\Box$}
\def\i{\mathrm {i}}
\def\e{\mathrm {e}}
\renewcommand{\Re}{\operatorname{Re}}
\renewcommand{\Im}{\operatorname{Im}}
\def\XXint#1#2#3{{\setbox0=\hbox{$#1{#2#3}{\int}$} 
     \vcenter{\hbox{$#2#3$}}\kern-.5\wd0}}
\begin{document}

\title[The Lugiato-Lefever equation with nonlinear damping]{The Lugiato-Lefever equation with nonlinear damping caused by two photon absorption}
 
\author{Janina G\"artner, Rainer Mandel, Wolfgang Reichel}

\address{~\hfill\break 
J. G\"artner \hfill\break
Karlsruhe Institute of Technology \hfill\break
Institute for Analysis \hfill\break
Englerstra{\ss}e 2 \hfill\break
D-76131 Karlsruhe, Germany}
\email{Janina.Gaertner@kit.edu}
\address{~\hfill\break 
R. Mandel \hfill\break
Karlsruhe Institute of Technology \hfill\break
Institute for Analysis \hfill\break
Englerstra{\ss}e 2 \hfill\break
D-76131 Karlsruhe, Germany}
\email{Rainer.Mandel@kit.edu}
\address{~\hfill\break 
W. Reichel \hfill\break
Karlsruhe Institute of Technology \hfill\break
Institute for Analysis \hfill\break
Englerstra{\ss}e 2 \hfill\break
D-76131 Karlsruhe, Germany}
\email{Wolfgang.Reichel@kit.edu}

\date{}

\subjclass[2000]{Primary: 34C23, 34B15; Secondary: 35Q55, 35Q60}
\keywords{Lugiato-Lefever equation, bifurcation, continuation, solitons, frequency combs, nonlinear damping, two photon absorption}

\begin{abstract}
  In this paper we investigate the effect of nonlinear damping on the Lugiato-Lefever equation 
  $$
  \i \partial_t a = -(\i-\zeta) a - da_{xx} -(1+\i\kappa)|a|^2a +\i f
  $$ 
  on the torus or the real line. For the case of the torus it is shown that for small nonlinear damping
  $\kappa>0$ stationary spatially  periodic solutions exist on branches that bifurcate from constant
  solutions whereas all nonconstant solutions disappear when the damping parameter $\kappa$ exceeds a
  critical value. These results apply both for normal ($d<0$) and anomalous ($d>0$) dispersion. For the
  case of the real line we show  by the Implicit Function Theorem  that for small nonlinear damping $\kappa>0$
  and large detuning $\zeta\gg 1$ and large forcing $f\gg 1$ strongly localized, bright solitary stationary
  solutions exists in the case of anomalous dispersion $d>0$. These results are achieved by using techniques
  from bifurcation  and continuation theory and by proving a convergence result for solutions of the time-dependent Lugiato-Lefever equation.
\end{abstract}

\maketitle

%\allowdisplaybreaks

\section{Introduction}

%\begin{itemize}
%  \item \c{Fehlerhaft:}  Stability of a stationary solution for the {L}ugiato-{L}efever
%              equation \cite{Miyaji_Ohnishi_Tsutsumi2011}                       
%\end{itemize}

\medskip

The Lugiato-Lefever equation
\begin{equation} \label{eq:LLE_timedependent_standard} 
   \i \partial_t a = -(\i-\zeta) a - da_{xx} -|a|^2a +\i f 
\end{equation}
was proposed in 1987 by Lugiato and Lefever~\cite{Lugiato_Lefever1987}
as an approximative model for the electric field inside an optical cavity excited by a laser pump of strength $f$.  
Since then many authors have derived~\eqref{eq:LLE_timedependent_standard} as a model, e.g., for the field
$a(x,t)=\sum_{k\in\Z} \hat a_k(t) \e^{\i kx}$ inside a continous wave(cw)-pumped ring resonator, cf. \cite{Chembo_Menyuk, chembo_2010,
herr_2012}. Here $\hat a_k(t)$ denotes the complex amplitude of the $k$-th excited mode in the ring
resonator. The cw-laser frequency has a detuning offset $\zeta$ relative to the primarily excited $0$-mode of
the ring resonator, and the second-order linear dispersion coefficient $d$ of the ring resonator may be
normal ($d<0$) or anomalous ($d>0$). Nonlinear interaction of the strongly enhanced field due to the Kerr
effect in the microresonator eventually leads to modulation instability. Consequently, a cascaded transfer of
power from the primarily excited mode to a multitude of neighbouring modes takes place. A resulting stable
stationary pattern of spectrally equidistant excited modes is called a frequency comb. Spectrally broad
octave spanning frequency combs have turned out to be extremely attractive sources for a variety of
applications including time and frequency metrology \cite{Diddams1999,Udem2002}, high-speed optical data
communications \cite{Pfeifle2015,Pfeifle_Koos3,Marin-Palomo2016}, and ultrafast optical ranging
\cite{Suh2017,Trocha2017}.

\medskip

Recently, semiconductors exhibiting two-photon-absorption (TPA) at telecommunication wavelengths such as silicon have been considered as waveguide materials for microresonators. TPA causes an electron from the valence band to be excited to the conduction band. There, free-carrier absorption (FCA) of additional photons leads to a further excitement to other states within the conduction band. While these nonlinear losses hinder the generation of frequency combs in microresonators, at the same time comb formation benefits from a higher Kerr nonlinearity that comes along with TPA. Furthermore, especially silicon is highly relevant from a practical point of view, since it is an established material used for photonic integrated circuits.

\medskip

We are not aware of mathematically rigorous studies on the Lugiato-Lefever equation with TPA or FCA. 
In this paper we want to start the analysis of the effect of TPA on the formation of frequency combs. For
mathematical reasons the effect of FCA will be neglected in this paper, since the full model is currently out
of reach for our analysis. TPA modifies the Kerr effect by adding an imaginary component $\i \kappa$,
$\kappa>0$ to the coefficient of the cubic nonlinear susceptibility. Following \cite{Hansson2016, Lau:15} the
model equation \eqref{eq:LLE_timedependent_standard} is therefore modified as follows
\begin{equation} \label{eq:LLE_timedependent} 
   \i \partial_t a = -(\i-\zeta) a - da_{xx} -(1+\i\kappa)|a|^2a +\i f.
\end{equation}
Since FCA will not be considered we have set the free carried density to $0$ so that the ODE for the free carrier density, 
which is coupled to \eqref{eq:LLE_timedependent}, cf. \cite{Hansson2016, Lau:15}, is not present.
Stationary solutions of \eqref{eq:LLE_timedependent} satisfy
\begin{equation} \label{eq:LL_complex} 
   -da''-(\i-\zeta)a -(1+\i\kappa)|a|^2a +\i f=0, \qquad a(\cdot)=a(\cdot+2\pi) 
\end{equation}
where the spatial period given by the circular nature of resonators is normalized to $2\pi$. Due to
the nonlinear damping effect of TPA in addition to the linear damping, TPA is unfavorable for comb formation.
However, in this paper we prove the converse: Kerr comb formation in silicon based microresonators is  still
possible if  the TPA coefficient $\kappa$ is sufficiently small. For large
$\kappa$ above a certain threshold, for which we provide lower bounds, Kerr comb formation is prohibited. Our
results apply both for normal and anomalous dispersion. Since soliton-like stationary solutions
of~\eqref{eq:LLE_timedependent} are of utmost importance in applications, we also consider the formation of
bright solitary combs for anomalous dispersion in the presence of small $\kappa$.

\medskip

Before describing our results for \eqref{eq:LLE_timedependent} and \eqref{eq:LL_complex} in
more detail, we first present the mathematical results which deal with the special case
$\kappa=0$ of purely linear damping. 
One important fact about~\eqref{eq:LL_complex} for $\kappa=0$ and any fixed $f\neq 0$ is that there is a
uniquely determined curve parameterized by $\zeta$ consisting of constant solutions, see for instance
Lemma~2.1~(a)~\cite{MaRe_aprioribounds} for an explicit parametrization. With $\zeta$ as a bifurcation
parameter bifurcation theory is a convenient tool for proving the existence of nonconstant solutions. A number of existence results
for~\eqref{eq:LL_complex}  with $\kappa=0$  were found using bifurcation results for dynamical systems via
the spatial dynamics approach
\cite{Godey_BifurcationAnalysis,Godey_et_al2014,DelHara_periodic,Parra-Rivas2014,Parra-Rivas2016,ParRivGomKnob_Bifurcation}.
Here the requirement of $2\pi$-periodicity is dropped and one is interested in nonconstant solutions of the
four-dimensional (real) dynamical system that corresponds to the second order ODE from~\eqref{eq:LL_complex}
for the complex-valued function $a$. A detailed analysis of the normal forms of this system around the
constant equilibria reveals which types of solutions exist in a neighbourhood.  
In \cite{Godey_BifurcationAnalysis} (Theorem~2.1--2.6) periodic, quasiperiodic and homoclinic orbits
were proved to exist near the curve of constant solutions both in the case of normal dispersion $(d<0)$  and 
anomalous dispersion $(d>0)$.  Since solutions corresponding to these orbits necessarily resemble constant
functions on $[0,2\pi]$, soliton-like solutions with a strong spatial profile can not be analytically
described by local bifurcation methods. Therefore, in order to see interesting spatial profiles, local
bifurcations have to be continued, e.g., by numerical methods, cf.
\cite{MaRe_aprioribounds,Parra-Rivas2014,Parra-Rivas2016,ParRivGomKnob_Bifurcation}, far away from the curve
of constant equilibria.
          
\medskip               
              
Proving local bifurcations of exactly $2\pi$-periodic solutions 
requires a different approach. A first local bifurcation bifurcation result from a specific constant
solution was proved in~\cite{Miyaji_Ohnishi_Tsutsumi2010} (Theorem~3.1). This
study was extended in \cite{MaRe_aprioribounds} using local and global bifurcation results due to
Crandall-Rabinowitz and Krasnoselski-Rabinowitz. All (finitely
many) bifurcation points on the curve of constant solutions were identified and the bifurcating solutions
were shown to lie on bounded solution continua that return to another bifurcation point. Some of these
continua even undergo  period-doubling, period-tripling, etc. secondary bifurcations as was shown in
Section~4 in~\cite{Mandel_secondary}.
The theoretical results  from \cite{MaRe_aprioribounds,Mandel_secondary} were accompanied by numerically
computed bifurcation diagrams indicating that the most localized and thus soliton-like solutions can be found
at those turning points of the branches that are the farthest away from the curve of trivial solutions. We remark that a two-dimensional version of the Lugiato-Lefever equation posed on the unit disk was recently
discussed in~\cite{MiyTsu_Steadystate}.

\medskip

Finally, still in the case $\kappa=0$ we mention some results about the time-dependent
equation~\eqref{eq:LLE_timedependent_standard}.
In~\cite{jami:14} it was proved that the initial value problem is globally well-posed in 
$a\in C(\R_+,H^4(\T))\cap C^1(\R_+,H^2(\T))\cap C^2(\R_+,L^2(\T))$ for initial data
in $H^4(\T)$. Here, $\T$ is the one-dimensional torus, i.e., the interval $[0,2\pi]$ with both ends
identified, and $\R_+=[0,\infty)$ is the temporal half-line. Additionally, it was shown that all solutions of
the initial value problem remain bounded in $L^2$ while the $H^1$-norm is proved to grow at most like
$\sqrt{t}$ as $t\to\infty$. In the corresponding model with an additional third order dispersion effect
well-posedness results and even the existence of a global attractor were proved
in~\cite{MiyTsu_existence}. Convergence results for the numerical Strang-splitting scheme can be found 
in~\cite{jami:14}. Finally, the orbital asymptotic stability of $2\pi$-periodic solutions was investigated
in~\cite{StanStef_asymptotic} (Theorem~1) with the aid
of the Gearhart-Pr\"uss-Theorem, see
also~\cite{Miyaji_Ohnishi_Tsutsumi2011,Miyaji_Ohnishi_Tsutsumi2011_Erratum}.
 Notice that the linearized operators (i.e. the generators of the
semigroup) are not selfadjoint, which makes this result particularly interesting. Using the center manifold approach, spectral stability and instability results as well as nonlinear stability with respect to co-periodic or subharmonic perturbations were obtained in \cite{DelHara_periodic}.

\medskip

Let us now describe the results of our paper. We consider \eqref{eq:LL_complex} with $f\not =0$, $\kappa\geq 0$ and $d\not =0$ fixed. Our first theorem
contains three results on the structure of solutions of \eqref{eq:LL_complex}.  Notice that for every
$\zeta\in\R$ \eqref{eq:LL_complex} has either one, two or three different constant solutions $a_0\in \C$ lying
on a smooth curve.  Theorem~\ref{thm:bifurcations} addresses the question of bifurcation from the curve of
trivial solutions.   We show that for sufficiently small $\kappa\in (0,1/\sqrt{3})$ bifurcation from the
curve of trivial solutions happens, whereas  for sufficiently large $\kappa>\kappa_\ast$ the trivial curve
has no bifurcation points at all. In case of small $\kappa$ we give sufficient conditions \eqref{integer},
\eqref{transversality} for bifurcation based on the Crandall-Rabinowitz theorem on bifurcation from simple
eigenvalues \cite{CrRab_bifurcation}. They correspond to simple kernels of the linearization around a given
point of the trivial curve and to transversality, respectively.

\medskip
 
The notion of bifurcation may depend on spaces and norms. In our context we use the following set-up. Let $\T$ be the one-dimensional torus, 
i.e., the interval $(0,2\pi]$ with end-points $0$ and $2\pi$ identified. We consider solutions $a=\Re a+\i
\Im a\in H^2(\T)$ of \eqref{eq:LL_complex}.

 \begin{thm}\label{thm:bifurcations} 
    For $f\neq 0, \kappa>0$ the following holds:
    \begin{itemize}
      \item[(i)] All constant solutions of \eqref{eq:LL_complex} form a smooth unbounded curve in $H^2(\T)\times\R$.
      \item[(ii)] A point $(\zeta,a_0)$ on the curve of constant solutions is a bifurcation point provided exactly one of the two numbers
      \begin{equation}\label{integer}
	k_{1,2} := \sqrt{\frac{2|a_0|^2-\zeta \pm \sqrt{(1-3\kappa^2)|a_0|^4-4\kappa |a_0|^2-1}}{d}}
    \end{equation}
    is in $\N$ and 
    \begin{multline} \label{transversality}
    2(3\kappa^2-|a_0|^4)(|a_0|^2-\zeta)-4\kappa|a_0|^2(3|a_0|^2-\zeta) \\
    \pm \sqrt{(1-3\kappa^2)|a_0|^4-4\kappa |a_0|^2-1}\Bigl(1+\zeta^2-|a_0|^4-4\kappa|a_0|^2+3\kappa^2\Bigr)
    \not =0
    \end{multline} 
    with ``$+$'' if $k_1\in\N$ and ``$-$'' if $k_2\in\N$.
    \item[(iii)] The curve of constant solutions does not contain bifurcation points
      provided $\kappa>\kappa_*$ where 
      \begin{align*}
        \kappa_* & := \max\left\{\kappa\in (0,\frac{1}{\sqrt{3}}):
        \frac{2\kappa+\sqrt{1+\kappa^2}}{(1-3\kappa^2)^3} (1-\kappa^2+\kappa\sqrt{1+\kappa^2})^2 \leq   f^2\right\} \mbox{ if } f^2>1, \\
        \kappa_* & :=0 \mbox{ if } f^2\leq 1.
      \end{align*}
    \end{itemize} 
  \end{thm}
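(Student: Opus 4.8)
\emph{Part~(i).} Inserting a constant $a_0\in\C$ into \eqref{eq:LL_complex} gives the scalar identity $\bigl((|a_0|^2-\zeta)+\i(1+\kappa|a_0|^2)\bigr)a_0=\i f$. Since $f\neq0$ forces $a_0\neq0$, I would put $\rho:=|a_0|^2>0$ and $\eta:=|a_0|^2-\zeta$; taking moduli gives $\rho\bigl(\eta^2+(1+\kappa\rho)^2\bigr)=f^2$, and then $a_0=\rho f^{-1}\bigl((1+\kappa\rho)+\i\eta\bigr)$. Hence the constant solutions correspond bijectively to $\{(\rho,\eta):\rho\in(0,\rho_{\max}],\ \eta^2=q(\rho)\}$, where $q(\rho):=f^2/\rho-(1+\kappa\rho)^2$ and $\rho_{\max}$ is the unique positive root of $\rho(1+\kappa\rho)^2=f^2$ (unique because $\rho\mapsto\rho(1+\kappa\rho)^2$ is strictly increasing for $\kappa>0$). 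Since $q>0$ on $(0,\rho_{\max})$, $q(\rho_{\max})=0$ and $q'(\rho_{\max})=-f^2\rho_{\max}^{-2}-2\kappa(1+\kappa\rho_{\max})<0$, this set is a smooth embedded $1$-manifold (a graph $\rho=\rho(\eta)$ near $\rho=\rho_{\max}$), and its image in $H^2(\T)\times\R$ under $(\rho,\eta)\mapsto(a_0,\rho-\eta)$ is the claimed curve; it is unbounded because $\zeta=\rho-\eta\to\mp\infty$ as $\rho\to0^+$.

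\emph{The linearisation.} For (ii) and (iii) the central object is the linearisation of \eqref{eq:LL_complex} at a constant solution $a_0=a_0(\zeta)$ on this curve,
\[
L(\zeta)\phi:=-d\phi''+(\zeta-\i)\phi-(1+\i\kappa)\bigl(2|a_0|^2\phi+a_0^2\overline{\phi}\bigr).
\]
Having constant coefficients, $L(\zeta)$ leaves each block $\spann_{\C}\{e^{\i kx},e^{-\i kx}\}$, $k\ge0$, invariant, and being a lower-order (hence relatively compact) perturbation of $-d\partial_{xx}$ it is Fredholm of index $0$. I would then check that $L(\zeta)$ fails to be injective exactly when $|A_k|^2=|B|^2$ for some $k$, with $A_k:=dk^2+\zeta-\i-2(1+\i\kappa)|a_0|^2$ and $B:=(1+\i\kappa)a_0^2$; rewriting this as $(dk^2+\zeta-2|a_0|^2)^2=(1-3\kappa^2)|a_0|^4-4\kappa|a_0|^2-1$ shows that it occurs precisely when $k\in\{k_1,k_2\}$, with $k_{1,2}$ as in \eqref{integer}. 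On the full space the associated kernel is at least two-dimensional — translation invariance of \eqref{eq:LL_complex} makes $\ker L(\zeta)$ invariant under $\partial_x$ — so to obtain a simple kernel I would restrict to the reflection-invariant subspace $H^2_{\mathrm{even}}(\T;\C)$, on which the $k$-th block is one complex dimension (spanned by $\cos kx$) and $L(\zeta)$ acts there, after identifying the block with $\C$, as the real $2\times2$ map $M_k(\zeta):p\mapsto A_kp-B\overline{p}$ of determinant $|A_k|^2-|B|^2$, whose kernel is one-dimensional when this vanishes.

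\emph{Part~(ii).} If exactly one of $k_1,k_2$ lies in $\N$ — and, excluding the non-generic case that the point is simultaneously a turning point of the constant-solution curve (equivalently $0\in\{k_1,k_2\}$), so that the curve is locally a graph over $\zeta$ and $0$ is not a kernel mode — then $\dim\ker L(\zeta_0)=1$, and Fredholmness gives $\codim\ran L(\zeta_0)=1$ as well. I would then apply the Crandall--Rabinowitz theorem \cite{CrRab_bifurcation} for bifurcation from the branch of constant solutions. Its transversality hypothesis is equivalent to $\frac{d}{d\zeta}\det M_m(\zeta)\big|_{\zeta_0}\neq0$, where $m\in\{k_1,k_2\}\cap\N$ and the derivative is taken along the constant-solution curve, so that $|a_0|^2=\rho(\zeta)$ varies with $\rho'(\zeta)$ found by implicitly differentiating $\rho\bigl((\rho-\zeta)^2+(1+\kappa\rho)^2\bigr)=f^2$. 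Performing this differentiation at $\zeta_0$, substituting $dm^2+\zeta_0-2|a_0|^2=\pm\sqrt{(1-3\kappa^2)|a_0|^4-4\kappa|a_0|^2-1}$ and using the constant-solution relation to eliminate $d$, $m$ and $f$, I expect the condition to reduce, after simplification, to the non-vanishing of the left-hand side of \eqref{transversality}, with the sign determined by whether $m=k_1$ or $m=k_2$. This algebraic simplification is the step I expect to be the main obstacle; the remainder is standard Fredholm theory and bookkeeping.

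\emph{Part~(iii).} A point of the constant-solution curve at which $L(\zeta_0)$ is injective on all of $H^2(\T;\C)$ is not a bifurcation point, since the implicit function theorem then yields local uniqueness and hence no nearby non-constant solutions. From the computation above, $|A_k|^2-|B|^2=(dk^2+\zeta-2|a_0|^2)^2-\bigl((1-3\kappa^2)|a_0|^4-4\kappa|a_0|^2-1\bigr)>0$ for every $k$ — so $L(\zeta_0)$ is injective — as soon as $(1-3\kappa^2)|a_0|^4-4\kappa|a_0|^2-1<0$. This holds for all admissible $a_0$ when $\kappa\ge1/\sqrt{3}$, and for $\kappa\in(0,1/\sqrt{3})$ it holds whenever $\rho:=|a_0|^2<\rho_+:=\frac{2\kappa+\sqrt{1+\kappa^2}}{1-3\kappa^2}$, the larger root of $(1-3\kappa^2)\rho^2-4\kappa\rho-1=0$. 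Since, by Part~(i), every constant solution has $\rho\le\rho_{\max}$, there is no bifurcation point at all once $\rho_+>\rho_{\max}$, equivalently (by monotonicity of $\rho\mapsto\rho(1+\kappa\rho)^2$) once $\rho_+(1+\kappa\rho_+)^2>f^2$. A direct computation gives $1+\kappa\rho_+=\frac{1-\kappa^2+\kappa\sqrt{1+\kappa^2}}{1-3\kappa^2}$, hence $\rho_+(1+\kappa\rho_+)^2=\frac{2\kappa+\sqrt{1+\kappa^2}}{(1-3\kappa^2)^3}\bigl(1-\kappa^2+\kappa\sqrt{1+\kappa^2}\bigr)^2=:h(\kappa)$. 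Finally I would check that $h$ is strictly increasing on $(0,1/\sqrt{3})$ with $h(0^+)=1$ and $h(\kappa)\to\infty$ as $\kappa\to(1/\sqrt{3})^-$: if $f^2\le1$ then $h(\kappa)>1\ge f^2$ for every $\kappa>0$, so no bifurcation point ever exists and $\kappa_*=0$ works; if $f^2>1$ then $\{\kappa\in(0,1/\sqrt{3}):h(\kappa)\le f^2\}=(0,\kappa_*]$ with $\kappa_*<1/\sqrt{3}$, and $h(\kappa)>f^2$ — hence no bifurcation point — for every $\kappa>\kappa_*$ (the range $\kappa\ge1/\sqrt{3}$ having already been handled). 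This is exactly the assertion.
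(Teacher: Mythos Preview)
Your proposal is correct and follows essentially the same architecture as the paper: parametrise the constant-solution curve, Fourier-decompose the linearisation, restrict to even functions to get simple kernels, apply Crandall--Rabinowitz for (ii), and for (iii) combine the discriminant bound $|a_0|^2\ge\rho_+$ with the a~priori bound $|a_0|^2\le\rho_{\max}$ coming from the algebraic relation. Parts~(i) and~(iii) match the paper almost line by line (the paper uses a slightly different parametrisation in (i), via $|a_0|^2=f^2(\tau-t^2)$, but the content is identical).

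The one genuine difference is your formulation of transversality in~(ii) as $\frac{d}{d\zeta}\det M_m(\zeta)\big|_{\zeta_0}\neq0$. The paper instead computes the kernel and cokernel vectors $\alpha,\alpha^*$ of the block matrix explicitly and evaluates $\langle D^2_{b,t}F(t_0,0)\phi,\phi^*\rangle$ directly. Your packaging is cleaner---it avoids case distinctions on $\Im(a_0^2)$ and the bookkeeping with $\alpha,\tilde\alpha,\alpha^*,\tilde\alpha^*$---and is equivalent via standard simple-eigenvalue perturbation (the zero eigenvalue of $M_m(\zeta_0)$ is algebraically simple because $\tr M_m(\zeta_0)=2(dk^2+\zeta-2|a_0|^2)\neq0$ whenever $k_1\neq k_2$). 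What you have not done, and honestly flag as the main obstacle, is the algebraic reduction to~\eqref{transversality}; the paper carries this out over roughly a page, using the tangent formula $\dot a_0=\tau\dot\zeta a_0$ with $\tau$ explicit, and it is indeed the bulk of the work.
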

  
  \begin{remark} 
	\begin{itemize}
		\item[(i)] Necessarily, we have $\kappa <\sqrt{3}$ in case (ii) since otherwise the values $k_{1,2}$
    in~\eqref{integer} will not be real. Moreover, in case (ii) we may apply Rabinowitz' global bifurcation
    theorem from \cite{Rab_some_global}. It says not only that $(\zeta,a_0)$ is a bifurcation point, but
    that there is a global branch of non-trivial solutions that either returns to the trivial
    branch at some other bifurcation point or becomes unbounded in the $\zeta$-direction or in the
    $H^2(\T)$-direction.
		\item[(ii)] Notice that by strict monotonicity, the value $\kappa_*$ is the uniquely determined solution of 
	\begin{align}\label{eq:zero_1}
     \frac{2\kappa+\sqrt{1+\kappa^2}}{(1-3\kappa^2)^3} (1-\kappa^2+\kappa\sqrt{1+\kappa^2})^2 = f^2,
   \end{align}
	cf. Figure~\ref{curve}.
	\item[(iii)] For $|f|\searrow 1$, we have $\kappa_*\to 0$. This is consistent with \cite{MaRe_aprioribounds}, where for $\kappa=0$ it was shown that no bifurcations occur in the case $|f|\leq 1$. 
	\item[(iv)] By running \texttt{pde2path} for increasing values of $\kappa>0$ we can determine numerically when bifurcations cease to exist. The values for $\kappa_\star$ from Theorem~\ref{thm:bifurcations} and these numerically determined values from \texttt{pde2path} are very similar, cf Table\ref{crit_kappa}.
	\end{itemize}
  \end{remark}
   
\begin{table}[ht]
	\begin{minipage}[t]{0.44\textwidth}
		\begin{center} 
			\begin{figure}[H]\vspace{-2.5cm}
				\begin{tikzpicture}[xscale=10,yscale=0.02]
					\draw[->] (-0.05,0) -- (0.6,0) node[right] {$\kappa$};
					\draw[->] (0,-25) -- (0,120);
					\draw[-](0.55,-10)--(0.55,10) node[below,yshift=-0.3cm](label) {\small{$1/\sqrt{3}$}};
					\draw[scale=1,domain=-0.:0.502,smooth,variable=\kappa,blue] plot ({\kappa},{2*\kappa+sqrt(1+\kappa*\kappa)/((1-3*\kappa*\kappa)*(1-3*\kappa*\kappa)*(1-3*\kappa*\kappa))*(1-\kappa*\kappa+\kappa*sqrt(1+\kappa*\kappa))*(1-\kappa*\kappa+\kappa*sqrt(1+\kappa*\kappa))});
					\draw[dashed] (0.47,50) -- (-0.005,50) node[left] {$f^2$};
					\draw[dashed] (0.47,50) -- (0.47,-5) node[below] {$\kappa_\star$};
				\end{tikzpicture}
			%\vspace{0.35cm}
			\caption{Illustration of~\eqref{eq:zero_1}.}
			\label{curve}
			\end{figure}
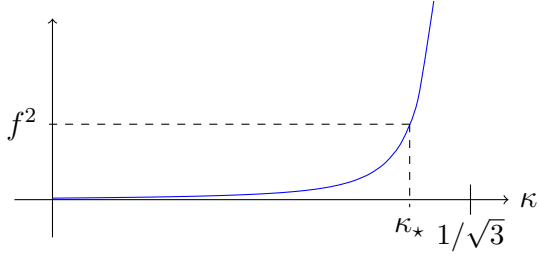
		\end{center}
	\end{minipage}
	\begin{minipage}[t]{0.55\textwidth}%\vspace{-2.25cm}	
		\begin{center}
			\begin{tabular}{|c|c|c|}\hline
				$f$ & $\kappa_\star$ & $\kappa_\star^{num}$\\ \hline
				$1.1$ & $0.045$ & $0.042$\\ 
				$1.6$ & $0.185$ & $0.185$\\ 
				$2$ & $0.248$ & $0.245$ \\
				$4$ & $0.380$ & $0.378$\\ 
				$10$ & $0.474$ & $0.473$ \\
				$20$ & $0.513$ & $0.513$\\ \hline
			\end{tabular}
		\end{center}
	%\vspace{0.5cm}
	\caption{$\kappa_\star$ from Theorem~\ref{thm:bifurcations} and numerical values from \texttt{pde2path}.}
	\label{crit_kappa}
	%\end{table}
	\end{minipage}
\end{table}

  Theorem~\ref{thm:bifurcations} provides nontrivial solutions via bifurcation theory for $\kappa\in
  (0,\kappa_*)$, i.e., the bifurcating branches described in~\cite{MaRe_aprioribounds} for $\kappa=0$ persist
  for small $\kappa>0$. The natural question, what happens to the bifurcating branches when $\kappa$ gets
  larger, is also answered in part (iii) of the theorem: bifurcation points disappear at latest when $\kappa$
  exceeds $\kappa_*$. In Figure~\ref{bif_dig} the vanishing of bifurcation points and nontrivial solutions
  for increasing $\kappa$ is illustrated. Black curves indicate the line of trivial solutions, colored curves
  show bifurcation branches. With increasing nonlinear damping, more and more bifurcation branches vanish,
  until all have disappeared when $\kappa$ exceeds the value $0.185$.
    
\begin{figure}[H]
\centering
\begin{minipage}[t]{0.31\columnwidth}
\includegraphics[width=\textwidth]{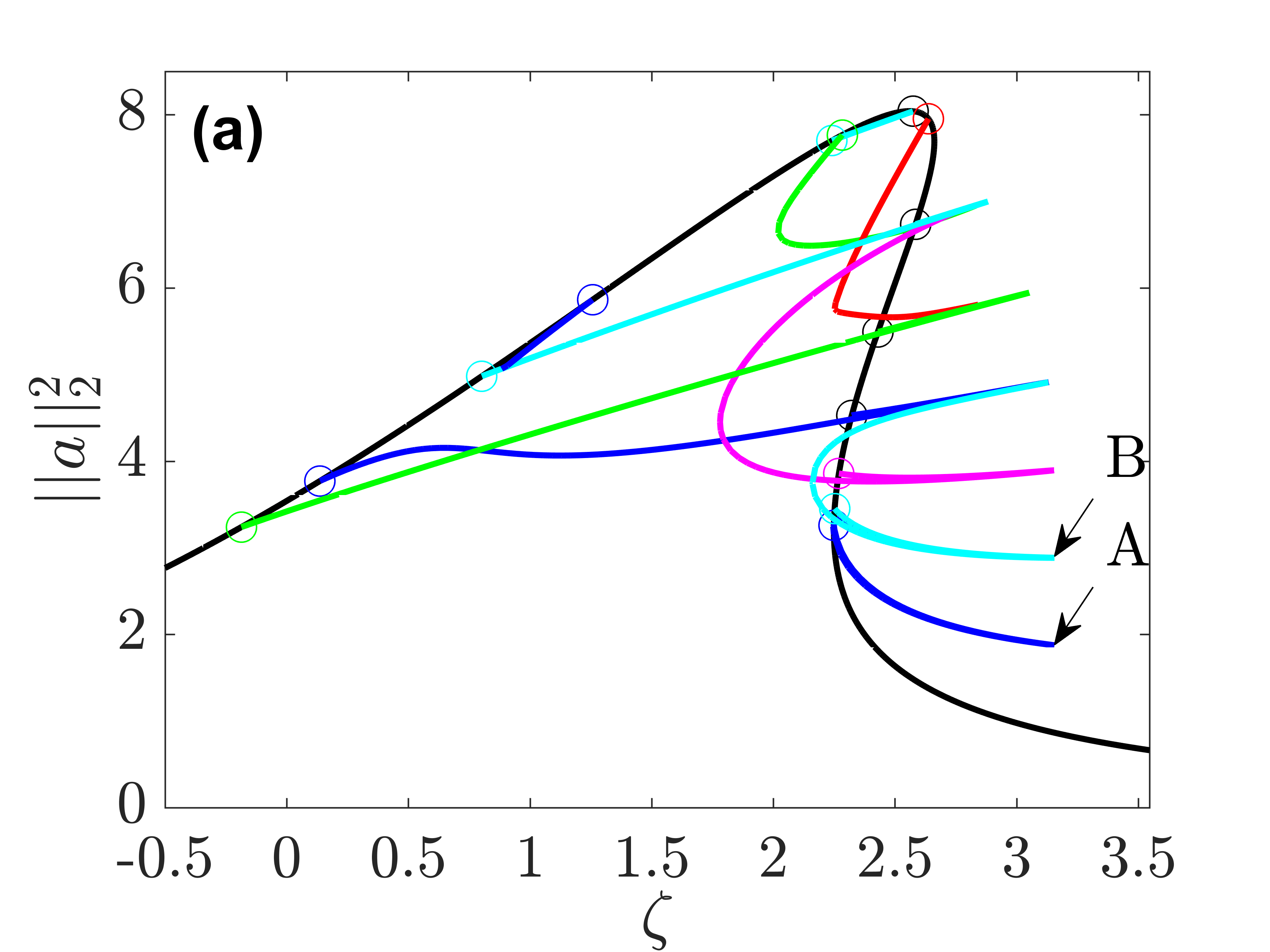}
\includegraphics[width=\textwidth]{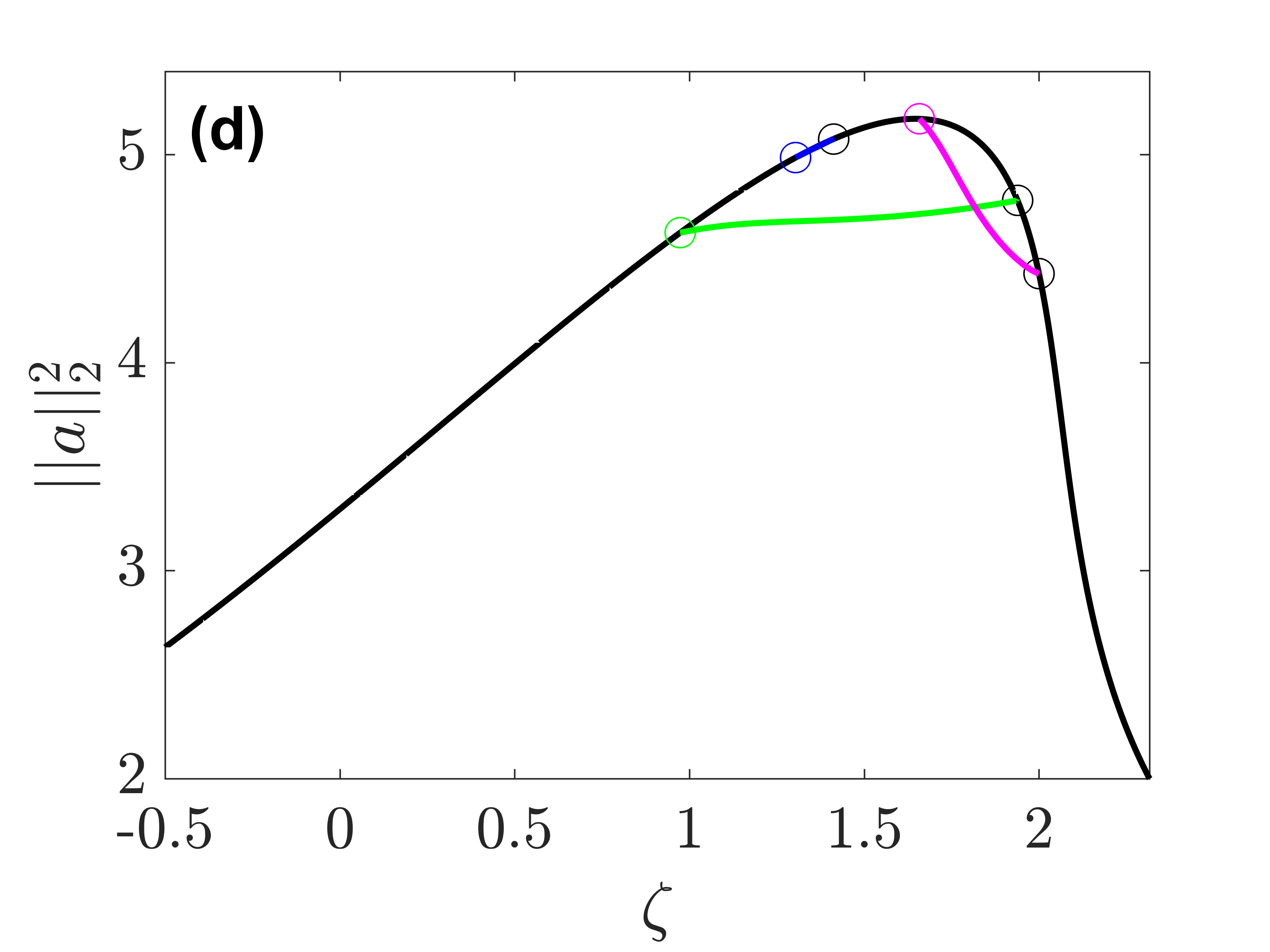}
\end{minipage} 
\begin{minipage}[t]{0.31\columnwidth}
\includegraphics[width=\textwidth]{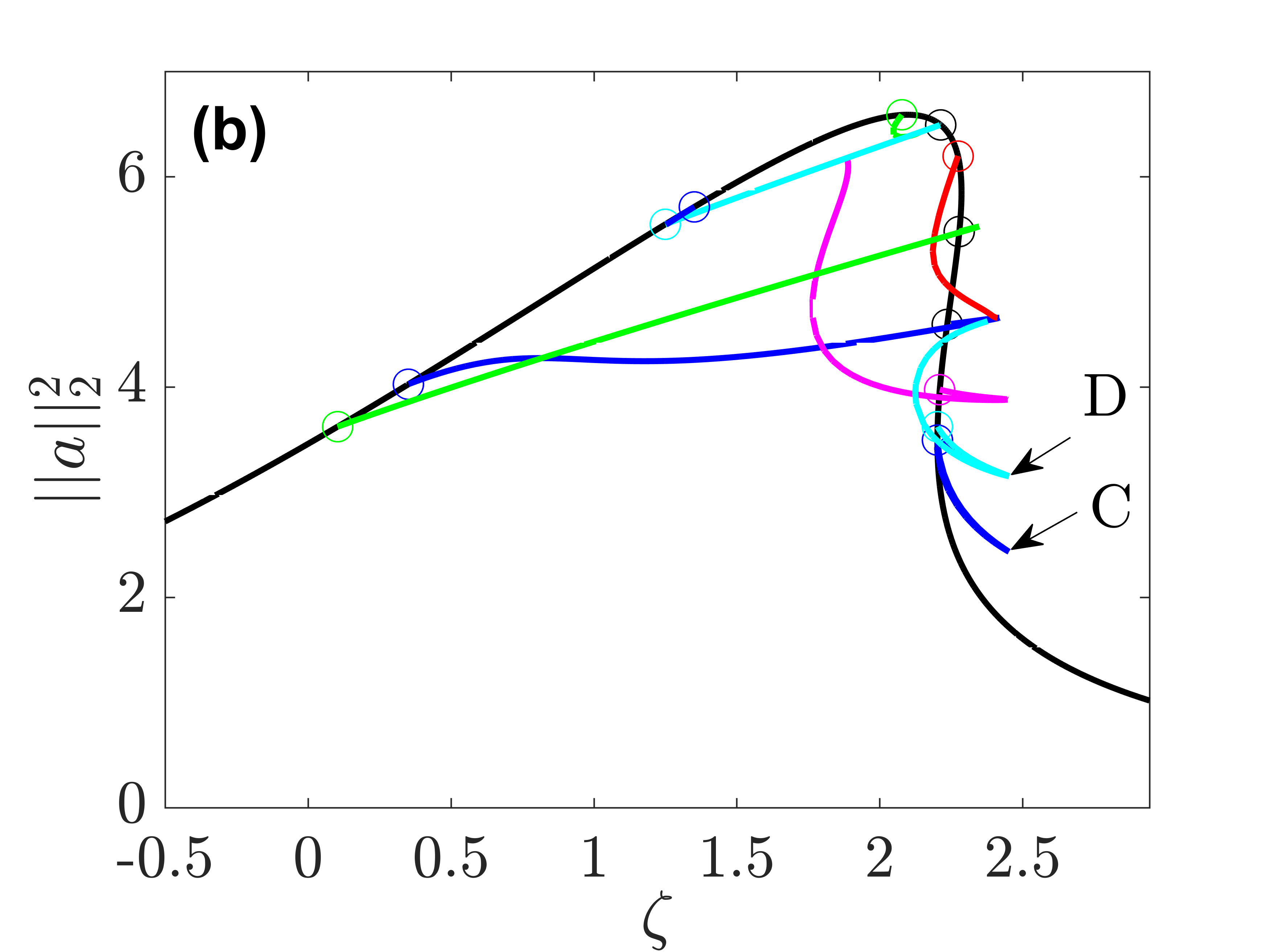}
\begin{tikzpicture}[overlay]
\node at (2.53,-1.47)
       {\includegraphics[width=\columnwidth]{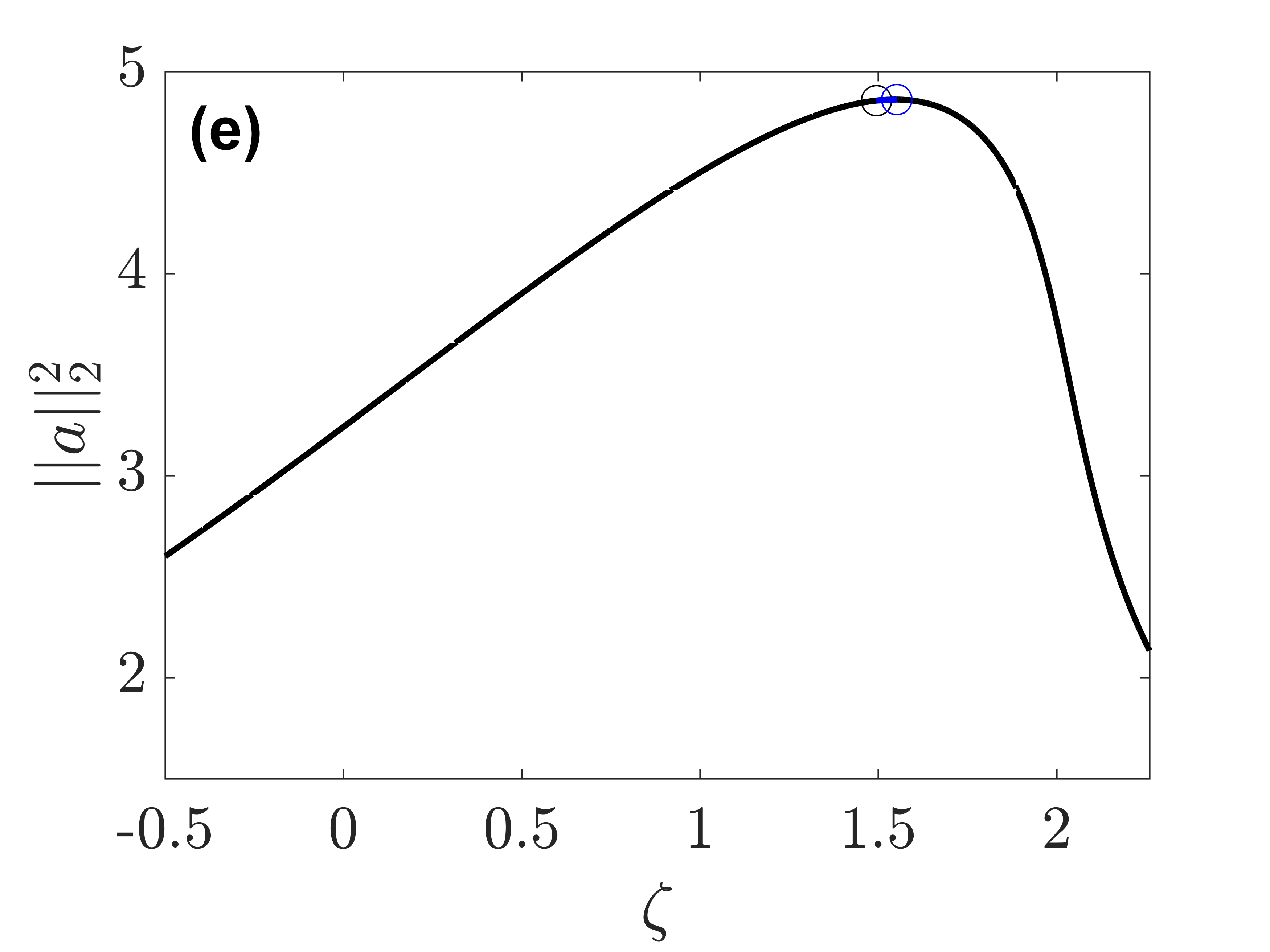}};
\node  at (3.14,-1.9) {\includegraphics[width=0.4\columnwidth]{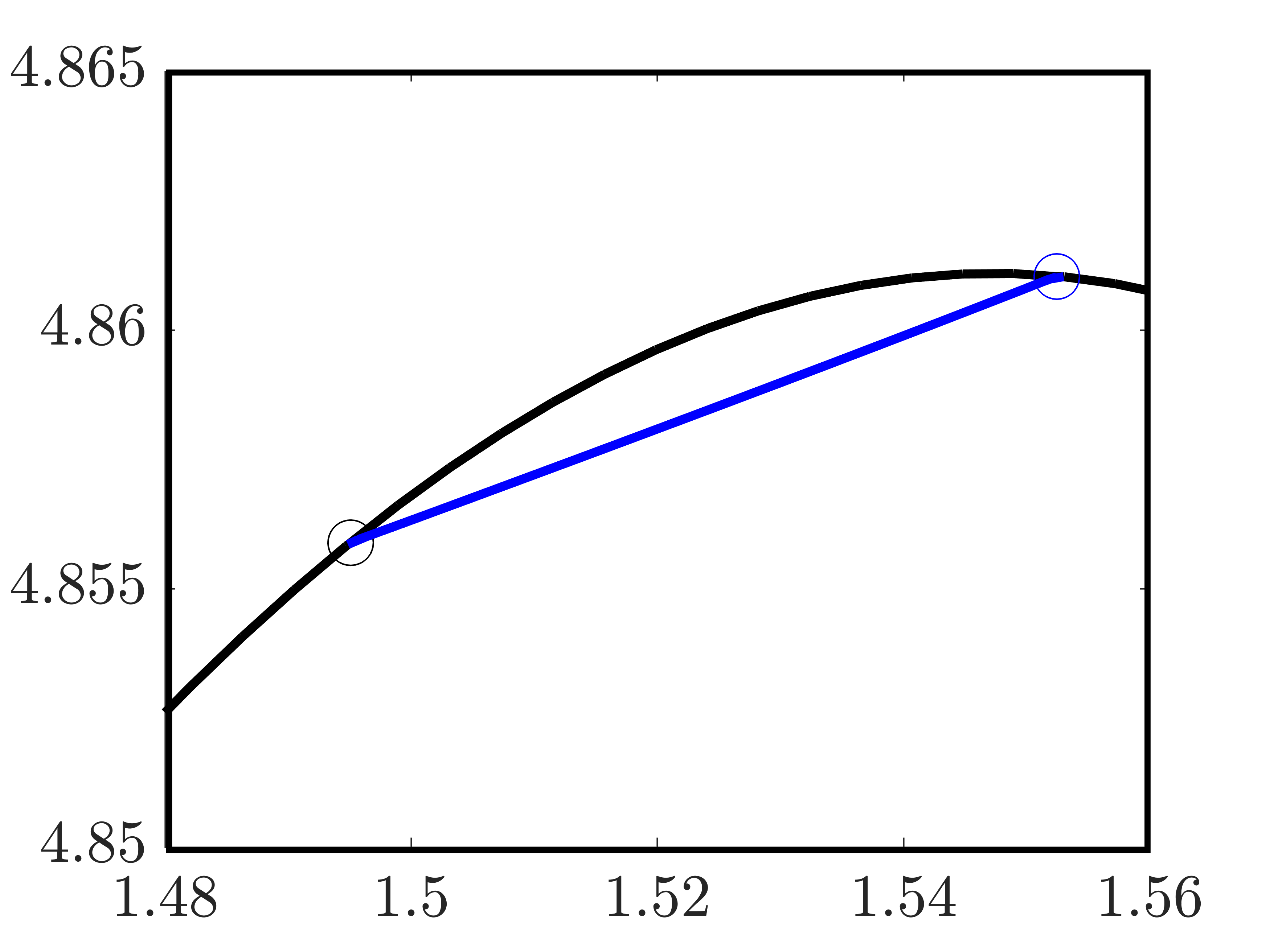}};
\draw [draw=black,thick] (3.84,0.16) rectangle (3.32,-0.1);
\draw (3.33,-0.1) -- (2.37,-1.25);
\draw (3.85,-0.1) -- (3.97,-1.25);
\end{tikzpicture}
\end{minipage}
\begin{minipage}[t]{0.31\columnwidth}
\includegraphics[width=\textwidth]{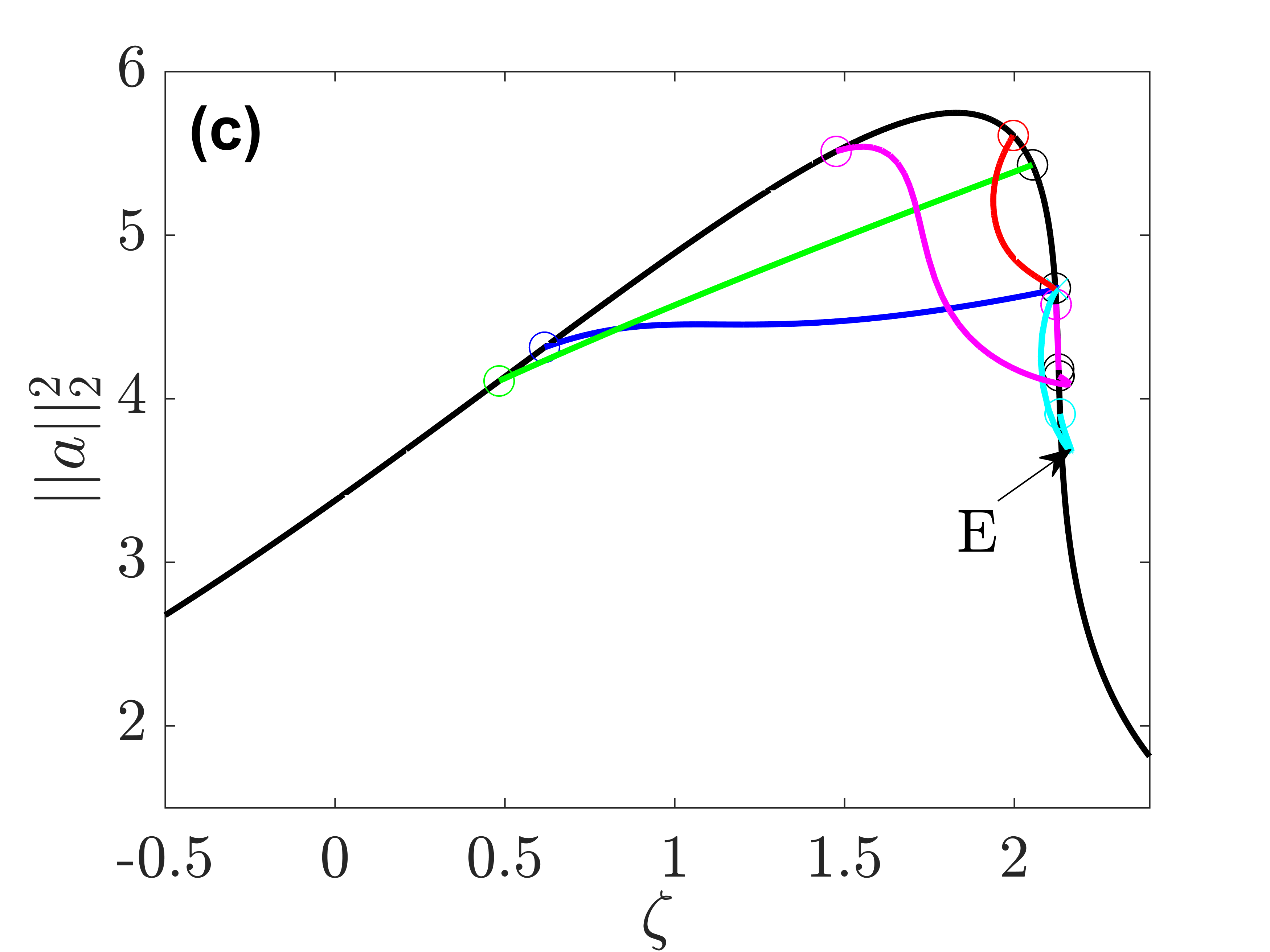}
\includegraphics[width=\textwidth]{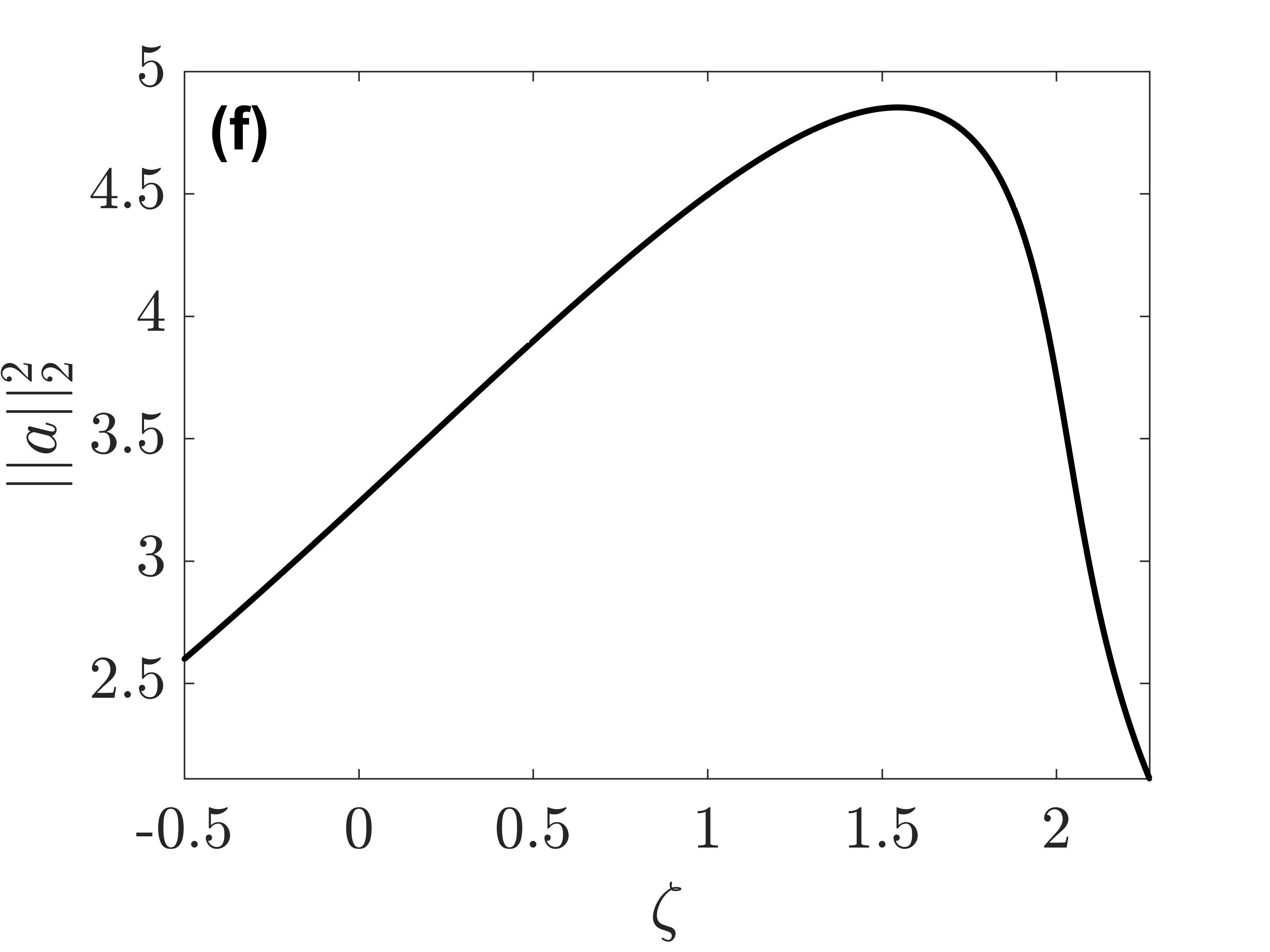}
\end{minipage}
\caption{Bifurcation diagrams for $d=0.1$, $f=1.6$. Subfigure (a) corresponds to $\kappa=0$ , (b) to
$\kappa=0.05$, (c) to $\kappa=0.1$, (d) to $\kappa=0.15$, (e) to $\kappa=0.185$, (f) to $\kappa=0.186$.
Solutions at turning points A, B in (a), C, D in (b) and E in (c) are shown in
Figure~\ref{solitons}.}\label{bif_dig}
\end{figure}

In Figure~\ref{solitons}(a), the solutions corresponding to the turning points A,C
in Figure~\ref{bif_dig} of the curve of 1-solitons are shown. Additionally, the 1-soliton at
the turning point of the corresponding branch for $\kappa=0.025$ is depicted. In
Figure~\ref{solitons}(b) the turning points B, D, E of the curve of 2-solitions are shown for different
values of the nonlinear damping coefficient. It becomes apparent that the solitons flatten as $\kappa$ increases.

\begin{figure}[H]
\centering
\begin{minipage}[t]{0.48\columnwidth}
\includegraphics[width=\textwidth]{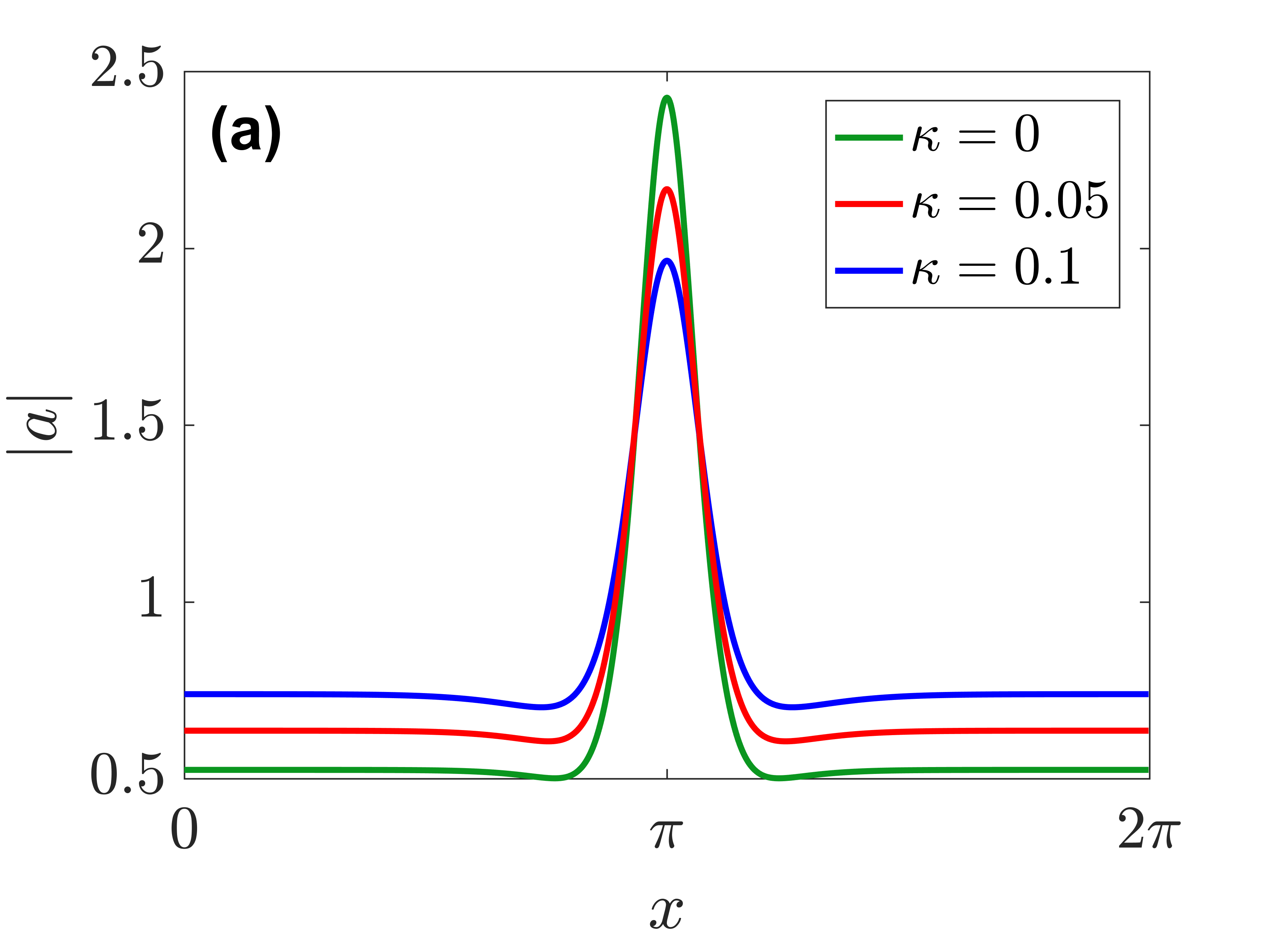}
 \end{minipage} 
 \hfill
 \begin{minipage}[t]{0.48\columnwidth}
  \includegraphics[width=\textwidth]{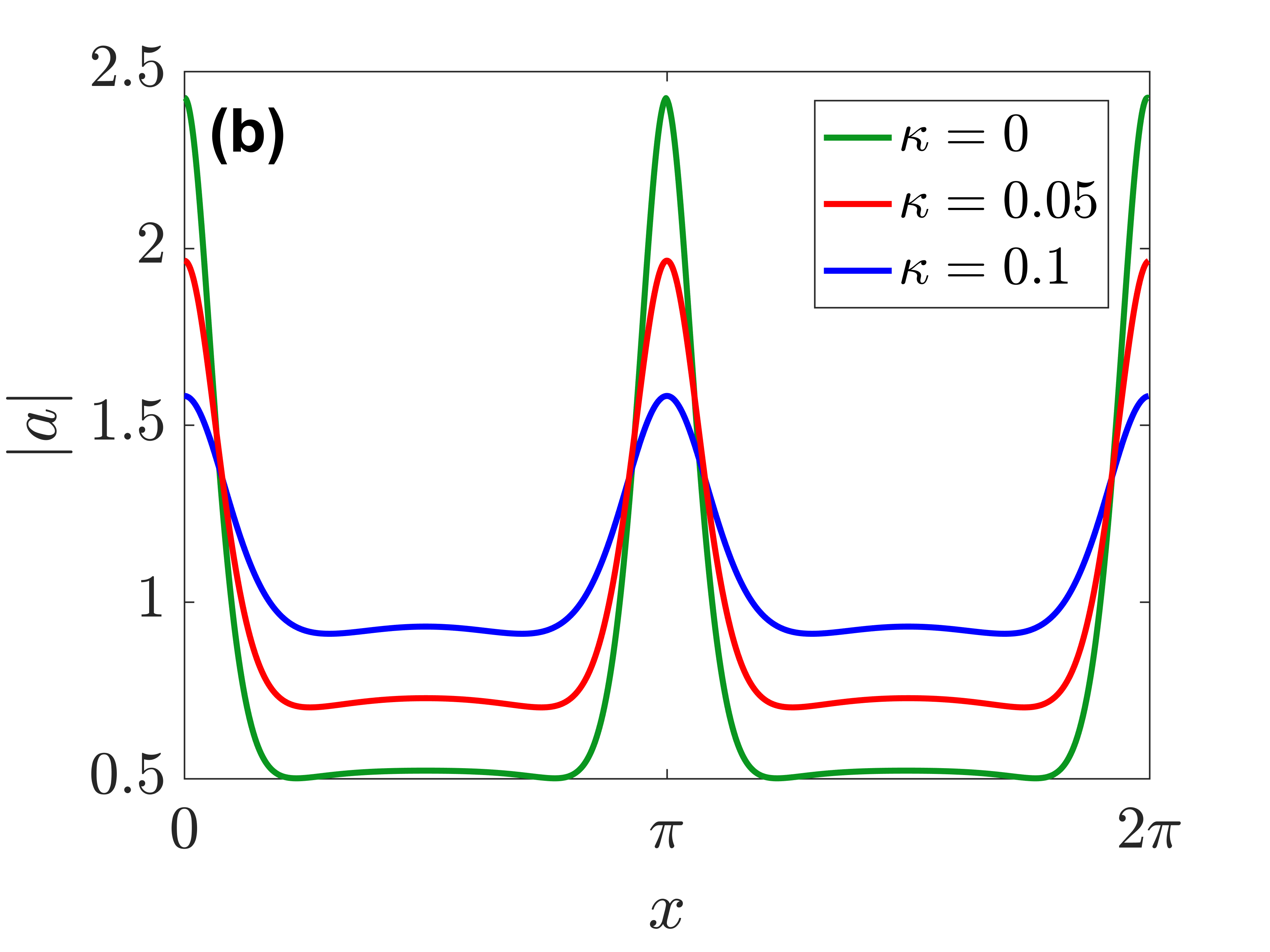} 
% \includegraphics[width=\textwidth]{bif1}
% \begin{tikzpicture}[overlay]
% \node at (2.53,-1.47)
%        {\includegraphics[width=\columnwidth]{bif4}};
% \node  at (3.14,-1.9) {\includegraphics[width=0.4\columnwidth]{zoom}};
% \draw [draw=black,thick] (3.84,0.16) rectangle (3.32,-0.1);
% \draw (3.33,-0.1) -- (2.49,-1.25);
% \draw (3.85,-0.1) -- (3.97,-1.25);
% \end{tikzpicture}
% \end{minipage}
% \begin{minipage}[t]{0.31\columnwidth}
% \includegraphics[width=\textwidth]{bif2}
% \includegraphics[width=\textwidth]{bif5}
 \end{minipage}
\caption{Subfigure (a) shows 1-solitons and subfigure (b) 2-solitons of \eqref{eq:LL_complex} for increasing values of $\kappa$.}
 \label{solitons}
\end{figure}
  
  Since Theorem~\ref{thm:bifurcations} only addresses the  
  occurence and disappearance of bifurcations, it does not answer the question what happens to the entire
  set of solutions when $\kappa$ increases. This is answered in our next two results: all nontrivial
  solutions disappear for $\kappa$ beyond a certain positive threshold. A first threshold for nonexistence of
 nontrivial solutions is given by the following result.

\begin{thm} \label{thm:constancy}
Let $d\neq 0$, $\kappa>0$, $\zeta,f\in\RR$ and let $\kappa^\star$ be given by
\begin{align*}
  \kappa^\star:= 6\sqrt{6}\left(1+2\pi^2f^2\abs{d}^{-1}\right)^3 f^2.
\end{align*}
Then all solutions of \eqref{eq:LL_complex} are constant provided $\kappa>\kappa^\star$.
\end{thm}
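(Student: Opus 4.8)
The plan is to show that, once $\kappa>\kappa^\star$, every solution $a$ of \eqref{eq:LL_complex} has $a'\equiv 0$. Since $H^2(\T)$ is a Banach algebra and $d\neq 0$, a standard elliptic bootstrap upgrades $a\in H^2(\T)$ to $a\in C^\infty(\T)$, so the manipulations below are legitimate. \textbf{Step 1 (a priori bounds, uniform in $\zeta$).} Testing \eqref{eq:LL_complex} against $\bar a$ and integrating over $\T$, the term $-d\int_0^{2\pi}a''\bar a\dx=d\int_0^{2\pi}|a'|^2\dx$ is real, so taking imaginary parts gives
\[
\int_0^{2\pi}|a|^2\dx+\kappa\int_0^{2\pi}|a|^4\dx=f\int_0^{2\pi}\Re a\dx\le|f|\sqrt{2\pi}\,\Bigl(\int_0^{2\pi}|a|^2\dx\Bigr)^{1/2},
\]
whence $\|a\|_{L^2}^2\le 2\pi f^2$ and, dropping the first term, $\kappa\|a\|_{L^4}^4\le 2\pi f^2$. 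Next one needs a $\zeta$-uniform $L^\infty$-bound of the form $\|a\|_\infty^2\le 1+2\pi^2f^2|d|^{-1}$. The subtlety is that the \emph{real} part of the identity above carries $\zeta\int|a|^2$, and $\|a'\|_{L^2}$ genuinely blows up as $\zeta\to-\infty$, so $\|a\|_\infty$ cannot be reached through $H^1$; instead one uses that $\mathcal L:=-d\partial_x^2+(\zeta-\i)$ has spectrum $\{dk^2+\zeta-\i:k\in\Z\}$ at distance $\ge 1$ from $0$ uniformly in $\zeta$, so $\|\mathcal L^{-1}\|_{L^2\to L^2}\le 1$ and its periodic Green's function is bounded by a multiple of $|d|^{-1}$ independently of $\zeta$; feeding in the integral control of the damping term from $\kappa\|a\|_{L^4}^4\le 2\pi f^2$ converts the $L^1$-size of the right-hand side of $a=\mathcal L^{-1}\bigl((1+\i\kappa)|a|^2a-\i f\bigr)$ into the pointwise bound. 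This step is where I expect the real work to lie.

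\textbf{Step 2 (differentiated equation).} Differentiating \eqref{eq:LL_complex}, $b:=a'$ solves $-db''-(\i-\zeta)b-(1+\i\kappa)\bigl(2|a|^2b+a^2\bar b\bigr)=0$. Testing against $\bar b$ and integrating, $-d\int b''\bar b=d\int|b'|^2$ is again real, so taking imaginary parts yields
\[
\int_0^{2\pi}|b|^2\dx+2\kappa\int_0^{2\pi}|a|^2|b|^2\dx=-\Im\int_0^{2\pi}a^2\bar b^2\dx-\kappa\,\Re\int_0^{2\pi}a^2\bar b^2\dx .
\]
Since $|a^2\bar b^2|=|a|^2|b|^2$ pointwise, the right-hand side is at most $\sqrt{1+\kappa^2}\int_0^{2\pi}|a|^2|b|^2$, so
\[
\int_0^{2\pi}|b|^2\dx\le\bigl(\sqrt{1+\kappa^2}-2\kappa\bigr)\int_0^{2\pi}|a|^2|b|^2\dx\le\bigl(\sqrt{1+\kappa^2}-2\kappa\bigr)\|a\|_\infty^2\int_0^{2\pi}|b|^2\dx .
\]

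\textbf{Step 3 (conclusion).} If $\kappa\ge 1/\sqrt3$ then $\sqrt{1+\kappa^2}-2\kappa\le 0$, so the last inequality already forces $\int_0^{2\pi}|b|^2\dx=0$, i.e.\ $a$ constant. If $\kappa<1/\sqrt3$ it suffices that $(\sqrt{1+\kappa^2}-2\kappa)\|a\|_\infty^2<1$; inserting the Step~1 bound $\|a\|_\infty^2\le 1+2\pi^2f^2|d|^{-1}$ and squaring the equivalent inequality $\sqrt{1+\kappa^2}<2\kappa+(1+2\pi^2f^2|d|^{-1})^{-1}$ shows this holds once $\kappa$ exceeds the positive root of a quadratic in $\kappa$ with coefficients depending only on $f$ and $d$; a crude bookkeeping of that root (using $f\neq 0$ and $\|a\|_{L^2}^2\le 2\pi f^2$) then shows that $\kappa>\kappa^\star=6\sqrt6\bigl(1+2\pi^2f^2|d|^{-1}\bigr)^3f^2$ is amply sufficient, the cube and the constant $6\sqrt6$ being artefacts of that crude estimate. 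In every case $a'\equiv 0$. Throughout, one must use only the imaginary parts of the tested identities — the real parts are where $\zeta$ would intrude — and it is worth noting that Steps~2 and 3 are insensitive to the sign of $d$, so normal and anomalous dispersion are covered simultaneously.
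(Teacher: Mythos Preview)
Your Steps~2 and~3 are correct and in fact cleaner than the paper's treatment of the differentiated equation: testing $-db''-(\i-\zeta)b-(1+\i\kappa)(2|a|^2b+a^2\bar b)=0$ with $\bar b$ and taking imaginary parts gives exactly
\[
\|b\|_2^2 \le (\sqrt{1+\kappa^2}-2\kappa)\int_0^{2\pi}|a|^2|b|^2\,dx,
\]
so for $\kappa\ge 1/\sqrt3$ you are done immediately. (This is the stationary analogue of the paper's time-dependent argument in Theorem~\ref{thm:flow}, where the same constant $\alpha_\kappa=-2\kappa+\sqrt{1+\kappa^2}$ appears.) The paper instead writes $L_\kappa A=2|a|^2A+a^2\bar A$ for a suitable $L_\kappa$ with $\|L_\kappa^{-1}\|\le 2$ and runs a contraction argument, which produces the factor $6$ in $\|K_a\|\le 6\|a\|_\infty^2$; your inequality dispenses with this.

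The genuine gap is Step~1. The bound $\|a\|_\infty^2\le 1+2\pi^2f^2|d|^{-1}$ is \emph{false} as stated: for $\kappa>0$ small there are constant solutions with $|a_0|^2$ arbitrarily close to $f^2$ (take $\zeta=|a_0|^2$ in \eqref{eq:const_solutions_II}), so for $|d|$ large and $|f|$ large the bound is violated by trivial solutions already. The Green's function sketch does not rescue this: you have not shown that the periodic Green's function of $-d\partial_x^2+\zeta-\i$ is bounded by $C|d|^{-1}$ uniformly in $\zeta$ (for $d>0$ and $\zeta\ll 0$ there is near-resonance at wavenumbers $k\approx\sqrt{-\zeta/d}$), and even granting such a bound, the $L^1$-control you have on $(1+\i\kappa)|a|^2a$ yields an $L^\infty$-bound depending on $\kappa$ in the wrong way, not the $\kappa$-independent bound you assert. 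Consequently the ``crude bookkeeping'' at the end cannot produce the stated $\kappa^\star$; indeed, if your Step~1 bound \emph{were} true, Steps~2--3 would give a threshold strictly smaller than $\kappa^\star$ (with no $6\sqrt6$), which should already signal that the bound is too strong.

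The paper obtains the $L^\infty$-estimate by a completely different, $\zeta$-free mechanism: with $G:=-d\Im(a'\bar a)$ one has $G'=|a|^2+\kappa|a|^4-f\Re a\ge -f^2/4$, so $|G-G(0)|\le \pi f^2/2$ on $[0,2\pi]$, and then
\[
|d|\,\|a'\|_2^2 \le \Big|\int_0^{2\pi}(|a|^2)'G\,dx\Big|\le \frac{\pi f^2}{2}\int_0^{2\pi}\big|(|a|^2)'\big|\,dx\le \pi f^2\|a\|_2\|a'\|_2,
\]
which together with $\|a\|_2\le\sqrt{2\pi}\,\tilde C_\kappa$, $\tilde C_\kappa=\min\{|f|,(|f|/\kappa)^{1/3}\}$, and Sobolev gives
\[
\|a\|_\infty \le \big(1+2\pi^2f^2|d|^{-1}\big)\tilde C_\kappa.
\]
It is precisely the factor $(|f|/\kappa)^{1/3}$ here that, fed into either your Step~2 inequality or the paper's contraction bound $6\|a\|_\infty^2<1$, yields $\kappa>6\sqrt6\,(1+2\pi^2f^2|d|^{-1})^3f^2=\kappa^\star$.
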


 A second threshold may be obtained by studying the time-dependent Lugiato-Lefever equation \eqref{eq:LLE_timedependent}. 
 Modifying slightly the proof by Jahnke, Mikl and Schnaubelt~\cite{jami:14} for \eqref{eq:LLE_timedependent_standard} we first derive the global
 well-posedness of the initial value problem for  \eqref{eq:LLE_timedependent}  with  initial data $a(0)= \phi\in H^4(\T)$.
 In~\cite{jami:14} the corresponding well-posedness result for $\kappa=0$ is based on the observation that
 the flow remains bounded in $L^2(\T)$ and that the $H^1(\T)$-norm grows at most like $\sqrt{t}$ as
 $t\to\infty$. It is not known whether infinite time blow-up or convergence occurs in this case.   
 We show that for sufficiently strong nonlinear damping $\kappa\geq \frac{1}{\sqrt 3}$ the
 solutions converge to a constant solution regardless of the initial datum. 
  
  \begin{thm}\label{thm:flow}
    Let $d\neq 0$ $\zeta, f\in\RR$ and $\kappa\geq \frac{1}{\sqrt 3}$. If
    $a(0)=\phi\in H^4(\T)$ then the solution of~\eqref{eq:LLE_timedependent} is in $C(\R_+;H^4(\T))$
    and converges in $H^1(\T)$ to a constant as $t\to\infty$. In particular, all solutions
    of~\eqref{eq:LL_complex} are constant.
  \end{thm}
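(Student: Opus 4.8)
The plan is to show that the damping condition $\kappa\ge 1/\sqrt3$ forces the spatial oscillations of the solution to decay to zero, so that the long-time dynamics is governed by a planar ODE for the spatial mean, and then to analyse that ODE by the Bendixson--Dulac criterion. As a preliminary step, adapting the fixed-point argument and a priori estimates of \cite{jami:14} to the additional term $\i\kappa|a|^2a$ (which does not affect the local theory and only improves the $L^2$- and $H^1$-bounds) gives global well-posedness, $a\in C(\R_+,H^4(\T))\cap C^1(\R_+,H^2(\T))$ for $\phi\in H^4(\T)$. Rewriting \eqref{eq:LLE_timedependent} as
\[
  \partial_t a = -(1+\i\zeta)a + \i d\,a_{xx} + (\i-\kappa)|a|^2a + f,
\]
the $L^2$-estimate reads $\frac{d}{dt}\|a\|_{L^2}^2 = -2\|a\|_{L^2}^2 - 2\kappa\|a\|_{L^4}^4 + 2f\,\Re\!\int_\T \bar a\dx$, whence $\sup_{t\ge 0}\|a(t)\|_{L^2}<\infty$; in particular the mean $\alpha(t):=\frac1{2\pi}\int_\T a(t,x)\dx$ stays bounded.

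The heart of the proof is the decay of $\|a_x(t)\|_{L^2}$. Differentiating the equation in $x$, testing with $\overline{a_x}$ and taking real parts, the dissipative linear term contributes $-2\|a_x\|_{L^2}^2$, while the detuning and the dispersive term contribute $0$; using $\Re(\i-\kappa)=-\kappa$ the cubic term contributes
\[
  2\,\Re\!\int_\T \overline{a_x}\,(\i-\kappa)\,\partial_x\!\left(|a|^2a\right)\dx
  = -4\kappa\!\int_\T |a|^2|a_x|^2\dx + 2\,\Re\!\left((\i-\kappa)\!\int_\T a^2\,\overline{a_x}^{\,2}\dx\right),
\]
and the last summand is bounded in modulus by $2\sqrt{1+\kappa^2}\int_\T |a|^2|a_x|^2\dx$. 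Hence
\[
  \frac{d}{dt}\|a_x\|_{L^2}^2 \le -2\|a_x\|_{L^2}^2 + 2\left(\sqrt{1+\kappa^2}-2\kappa\right)\!\int_\T |a|^2|a_x|^2\dx ,
\]
and for $\kappa\ge 1/\sqrt3$ one has $\sqrt{1+\kappa^2}\le 2\kappa$, so the last term is nonpositive and $\|a_x(t)\|_{L^2}\le \e^{-t}\|\phi_x\|_{L^2}\to 0$ as $t\to\infty$. This is exactly the point where the threshold $\kappa\ge 1/\sqrt3$ is used. By the Poincaré inequality on $\T$, the zero-mean part $b(t):=a(t)-\alpha(t)$ then satisfies $\|b(t)\|_{H^1}\le C\|a_x(t)\|_{L^2}\to 0$.

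Integrating \eqref{eq:LLE_timedependent} over $\T$ and using $\int_\T b = \int_\T \bar b = 0$ produces the asymptotically autonomous ODE
\[
  \dot\alpha = F(\alpha) + g(t),\qquad F(\alpha):=-(1+\i\zeta)\alpha + f + (\i-\kappa)|\alpha|^2\alpha ,
\]
where $g(t)=\frac{\i-\kappa}{2\pi}\int_\T\!\big(|a|^2a-|\alpha|^2\alpha\big)\dx$ only collects terms quadratic and cubic in $b$, so $|g(t)|\le C\|b(t)\|_{H^1}^2\to 0$. The equilibria of $F$ are precisely the constant solutions of \eqref{eq:LL_complex}; they correspond to the nonnegative roots of a cubic polynomial in $|\alpha|^2$, so there are at most three of them and they are isolated. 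Writing $\alpha = u+\i v$, a short computation gives $\partial_u \Re F + \partial_v \Im F = -2 - 4\kappa(u^2+v^2) < 0$ everywhere, so the Bendixson--Dulac criterion rules out periodic orbits and homoclinic or heteroclinic cycles of the autonomous limit system $\dot y = F(y)$.

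Since $\alpha(\cdot)$ is bounded, the Poincaré--Bendixson theory for asymptotically autonomous planar systems now forces the $\omega$-limit set of $\alpha$ to be a nonempty, compact, connected, invariant subset of $\dot y = F(y)$ that contains neither a periodic orbit nor a polycycle; by the Dulac computation it therefore consists only of equilibria, and being connected and made of isolated points it is a single equilibrium $\alpha_\infty$. Hence $\alpha(t)\to\alpha_\infty$ and $a(t)=\alpha(t)+b(t)\to\alpha_\infty$ in $H^1(\T)$, where the constant $\alpha_\infty$ solves \eqref{eq:LL_complex}. The final assertion follows immediately: a nonconstant stationary solution of \eqref{eq:LL_complex} is smooth, in particular in $H^4(\T)$, by elliptic regularity, so taking it as initial datum gives a constant-in-time trajectory that cannot converge to a constant --- a contradiction. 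The step I expect to be the main obstacle is the last one: passing from ``the $\omega$-limit set consists of constants'' to genuine convergence of $\alpha(t)$, which is precisely where the Dulac sign condition (excluding oscillatory or recurrent limiting behaviour) is essential.
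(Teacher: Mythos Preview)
Your proof is correct. The first half --- global well-posedness, the $L^2$-bound, and the key differential inequality for $\|a_x\|_2^2$ leading to exponential decay when $\kappa\ge 1/\sqrt3$ --- matches the paper's argument exactly; your crude bound $|\Re((\i-\kappa)\!\int a^2\overline{a_x}^2)|\le \sqrt{1+\kappa^2}\!\int|a|^2|a_x|^2$ is a tidier repackaging of the paper's pointwise scalar inequality and yields the same threshold $\alpha_\kappa=-2\kappa+\sqrt{1+\kappa^2}\le 0$.

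Where you genuinely diverge from the paper is in upgrading subsequential convergence of the mean to full convergence. The paper first extracts a subsequence $a(t_m)\to a^\ast$ with $a^\ast$ a constant stationary solution, and then runs a direct $L^2$--stability estimate for $h(t)=a(t)-a^\ast$: using the pointwise inequality $\Re\!\big((\i-\kappa)(|a^\ast+h|^2(a^\ast+h)-|a^\ast|^2a^\ast)\bar h\big)\le \delta|h|^2$ for $|h|$ small (again via $\alpha_\kappa\le 0$), one gets $\frac{d}{dt}\|h\|_2^2\le 2(-1+\delta)\|h\|_2^2$ as long as $\|h\|_\infty$ stays small, and the already-established decay of $\|a_x\|_2$ keeps $\|h\|_\infty$ under control. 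This is entirely self-contained and elementary. Your route instead reduces to the asymptotically autonomous planar ODE $\dot\alpha=F(\alpha)+g(t)$ with $g(t)\to 0$, computes the divergence $-2-4\kappa|\alpha|^2<0$, and invokes Bendixson--Dulac together with the Markus--Thieme theory for asymptotically autonomous planar flows to conclude that the $\omega$-limit set is a single equilibrium. Your approach is conceptually clean and reveals that the final convergence step does not use $\kappa\ge 1/\sqrt3$ at all (only $\kappa\ge 0$), at the cost of importing a nontrivial external theorem; the paper's approach is more hands-on but yields, as a byproduct, an explicit local asymptotic stability statement for constant equilibria in $H^1$.
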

  
  Combining Theorem~\ref{thm:constancy} and Theorem~\ref{thm:flow} we obtain that for 
  $\kappa>\min\{\kappa^*,\frac{1}{\sqrt 3}\}$ only constant solutions exist. Notice that all weak solutions
  of~\eqref{eq:LL_complex} are smooth and in particular lie in $H^4(\T)$.  
  Actually we can also prove convergence results for smaller $\kappa$ assuming that $\|\phi_x\|_2$ is not too
  big. We refer to Lemma~\ref{lem:flow} for details.
  
  \medskip 
  
  Finally we discuss the effect of nonlinear damping to the Lugiato-Lefever equation on the real line in the case of anomalous dispersion $d>0$. In this
  case the problem reads
  \begin{align}\label{eq:LL_realline}
     -da'' -(\i-\zeta) a -(1+\i\kappa|a|^2)a + \i f=0  \quad\text{on }\R,\qquad a'(0)=0  
  \end{align}
  and we are interested in even homoclinic solutions. More precisely, the solutions we will
  find have the form  $a=\tilde{a}+a^\infty$ where $a^\infty\in\C$ and $\tilde{a}\in H^2(\RR)$.  
  This is a valid approach, since highly localized solutions of \eqref{eq:LL_realline} serve as good
  approximations for solutions of \eqref{eq:LL_complex}, cf. \cite{Herr2013}.
  Using a suitable singular rescaling of the problem as well as the Implicit Function Theorem, we prove
  the existence of large solutions of~\eqref{eq:LL_realline} for large parameters $\zeta$ and $f$ and small
  nonlinear damping $\kappa$.
  
  %\begin{thm} 
    %Let $d,\tilde\zeta>0$ and $|\tilde f|<\frac{2\sqrt 3}{9}\tilde \zeta^{3/2}$. 
     %Then there exist $\kappa^\star,\eps^\star>0$ and two distinct families consisting of even homoclinic
    %solutions of the form $(\varepsilon,\kappa)\mapsto(a_{\varepsilon,\kappa},\tilde{\zeta}\varepsilon^{-1},\tilde{f}\varepsilon^{-3/2},\kappa)$ of
		%\begin{align}\label{eq:LL_real2}
		%-da''+\Bigl(\frac{\tilde{\zeta}}{\varepsilon}-\mathrm i\Bigr)a-(1+\mathrm i \kappa)|a|^2a+\mathrm i \frac{\tilde{f}}{\varepsilon^{3/2}}=0,\quad a'(0)=0
		%\end{align}
		%for $(\varepsilon,\kappa)\in(0,\varepsilon^\star)\times(0,\kappa^\star)$ with
		%$\norm{a_{\varepsilon,\kappa}-a^\infty_{\varepsilon,\kappa}}_{H^2}\to\infty$ as $\varepsilon,\kappa\to 0$. 
		%%
    %%sufficiently small there exist two solutions of~\eqref{eq:LL_realline} \r{of the form \ldots} for the parameter
    %%values  $\zeta=\tilde{\zeta}\varepsilon^{-1},f=\tilde{f}\varepsilon^{-3/2}$. These solutions form a smooth
    %%curve in \r{\ldots and they have the qualitative properties \ldots \\
    %%* Limits of these solutions as $\eps,\kappa\to 0$ (limiting profile), 
    %%* How can these solutions/curves be  distinguished from each other? $xa'(x)\geq 0$ or $\leq 0$? \
    %%* \ldots
    %%}
  %\end{thm}
  \begin{thm}  \label{thm:continuation}
    Let $d,\tilde\zeta>0$ and  $0< |\tilde f|<\frac{2\sqrt 3}{9}\tilde \zeta^{3/2}$. 
    Then for all $\varepsilon,\kappa>0$ sufficiently small there are two even homoclinic solutions $a_{\varepsilon,\kappa}$ of
    \eqref{eq:LL_realline} with $\zeta=\tilde{\zeta}\varepsilon^{-1},f=\tilde{f}\varepsilon^{-3/2}$ satisfying 
    $\norm{a_{\varepsilon,\kappa}-\lim_{\abs{x}\to\infty}a_{\varepsilon,\kappa}(x)}_{H^2}\to\infty$ as $\varepsilon\to 0$ uniformly with respect to $\kappa$. 
  \end{thm}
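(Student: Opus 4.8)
The plan is to perform a singular rescaling that turns the bright-soliton regime $\zeta\gg1$, $f\gg1$ into a regular perturbation of a known limiting equation, and then to invoke the Implicit Function Theorem around a nondegenerate solution of that limit. Concretely, write $\zeta=\tilde\zeta\varepsilon^{-1}$ and $f=\tilde f\varepsilon^{-3/2}$ as in the statement, split $a=\tilde a+a^\infty$ with $a^\infty=a^\infty(\varepsilon,\kappa)\in\C$ the (branch of) constant solution selected by $-(\i-\zeta)a^\infty-(1+\i\kappa)|a^\infty|^2a^\infty+\i f=0$, and rescale both the amplitude and the spatial variable, e.g. $\tilde a(x)=\varepsilon^{-1/2}\,u(\varepsilon^{1/2}x/\sqrt d)$ (the exact powers are dictated by matching the $da''$, $\zeta a$ and $|a|^2a$ terms at the same order). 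Under this substitution the equation for $u$ becomes, after dividing by the appropriate power of $\varepsilon$,
\begin{equation*}
  -u'' + \tilde\zeta\, u - |u|^2 u = \varepsilon\,R(u,\varepsilon,\kappa) + (\text{terms } O(\kappa)),
\end{equation*}
so that at $\varepsilon=\kappa=0$ one is left with the classical stationary focusing NLS/Lugiato-Lefever soliton equation $-u''+\tilde\zeta u=|u|^2u$ on $\R$ (with $u'(0)=0$), whose even, real, exponentially localized ground state $u_0(x)=\sqrt{2\tilde\zeta}\,\sech(\sqrt{\tilde\zeta}\,x)$ is explicit. The constraint $0<|\tilde f|<\tfrac{2\sqrt3}{9}\tilde\zeta^{3/2}$ should be exactly what guarantees that the selected constant background $a^\infty$ lies on the correct (lower) branch and is small of the right order, so that $\tilde a$ genuinely dominates and the rescaled remainder $R$ stays bounded in the relevant norms; it is the analogue of the condition ensuring two distinct homoclinics appear (the factor $2$ in "two even homoclinic solutions" presumably coming from the two admissible choices of $a^\infty$, or from a $\pm$ in the bifurcation equation for the background).

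The key analytic step is to set this up as $F(u,\varepsilon,\kappa)=0$ with $F:H^2_{\mathrm{even}}(\R;\C)\times\R^2\to L^2(\R;\C)$, check $F(u_0,0,0)=0$, and verify that the partial derivative $D_uF(u_0,0,0)$ — which is the linearization $L\xi=-\xi''+\tilde\zeta\xi-2|u_0|^2\xi-u_0^2\bar\xi$ of the NLS soliton equation, acting on the space of even functions — is an isomorphism. This is the crux: $L$ generically has a kernel, but on $H^2(\R)$ the translational zero mode $u_0'$ is odd and hence killed by the evenness restriction, and the scaling/phase modes must be handled by the fact that here we work with the equation at fixed $\tilde\zeta$ and with the real structure (no free phase, since $f$ breaks the $U(1)$ symmetry). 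One then argues that on even functions $L$ has trivial kernel and, being a relatively compact perturbation of $-\partial_{xx}+\tilde\zeta$, is Fredholm of index zero, hence invertible. Smoothness of $F$ in all three arguments (the $\varepsilon$- and $\kappa$-dependent terms are polynomial in $u$ and smooth in the parameters once $a^\infty(\varepsilon,\kappa)$ is shown to be smooth via the IFT applied to the algebraic background equation) then yields, for each small $\varepsilon,\kappa$, a unique solution $u_{\varepsilon,\kappa}$ near $u_0$; undoing the rescaling gives $a_{\varepsilon,\kappa}=\tilde a_{\varepsilon,\kappa}+a^\infty$, and $\|\tilde a_{\varepsilon,\kappa}\|_{H^2}\sim\varepsilon^{-1/2}\|u_{\varepsilon,\kappa}\|_{H^2}\to\infty$, with the blow-up rate uniform in $\kappa$ because all estimates are uniform on a neighbourhood of $(0,0)$ in the $(\varepsilon,\kappa)$-plane.

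I expect the main obstacle to be the invertibility of $D_uF(u_0,0,0)$ on the even subspace together with the bookkeeping of the singular rescaling. One must be careful that: (a) the correct powers of $\varepsilon$ are chosen so that every term that does not survive the limit is genuinely $O(\varepsilon)$ in operator norm $H^2\to L^2$ — in particular the linear terms coming from the background $a^\infty$ and from the $\i$ (loss) term must be subleading, which is where the precise scalings $\zeta\sim\varepsilon^{-1}$, $f\sim\varepsilon^{-3/2}$ earn their keep; (b) the spectral analysis of $L$ restricted to even functions is carried out correctly — this is standard for the cubic NLS soliton (the even part of $\ker L$ is spanned by $\partial_{\tilde\zeta}u_0$, which is \emph{not} in the kernel at fixed $\tilde\zeta$), but it needs a clean statement; and (c) the two-ness of the solution set is tracked honestly through the construction. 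Once these are in place the IFT does the rest, and the uniformity in $\kappa$ is automatic since $\kappa$ enters smoothly and $\kappa=0$ is an interior reference point of the neighbourhood on which the IFT is applied.
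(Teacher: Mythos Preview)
Your overall architecture (singular rescaling followed by the Implicit Function Theorem on even functions) matches the paper, but there is a genuine gap in your identification of the limiting problem. After the rescaling $a(x)=\varepsilon^{-1/2}u(\varepsilon^{-1/2}x)$ the equation for $u$ reads
\[
  -du''+(\tilde\zeta-\varepsilon\i)u-(1+\i\kappa)|u|^2u+\i\tilde f=0,
\]
so at $\varepsilon=\kappa=0$ one obtains $-du''+\tilde\zeta u-|u|^2u+\i\tilde f=0$, \emph{not} the pure NLS soliton equation. The forcing does \emph{not} become a small remainder: since $f=\tilde f\,\varepsilon^{-3/2}$ scales exactly like the cubic term, it survives at leading order. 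Equivalently, the rescaled background $c^\infty:=\varepsilon^{1/2}a^\infty$ converges to a \emph{nonzero} constant $\i v^{(2)}$ (the middle root of $-\tilde\zeta v+v^3=\tilde f$), so the nonlinearity in the equation for $\tilde u=u-c^\infty$ keeps the background and is not $|\tilde u|^2\tilde u$. The hypothesis $0<|\tilde f|<\tfrac{2\sqrt3}{9}\tilde\zeta^{3/2}$ is precisely the condition for three distinct real roots $v^{(1)}<v^{(2)}<v^{(3)}$; it is not a smallness condition on the background.

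This matters exactly at the step you flag as the crux. For the pure cubic NLS soliton $u_0=\sqrt{2\tilde\zeta}\,\sech(\sqrt{\tilde\zeta}\,\cdot)$ the linearization on $H^2(\R;\C)$ has kernel $\spann\{u_0',\i u_0\}$, and the phase mode $\i u_0$ is \emph{even}; hence $D_uF(u_0,0,0)$ is \emph{not} invertible on $H^2_{\mathrm{even}}$ and your IFT step fails as written. In the paper's setup the $U(1)$ symmetry is already broken in the limit by $\tilde f\neq 0$: the limit solutions are purely imaginary, $u_i=\i v_i$, where $v_i$ are the two homoclinic loops of the saddle $v^{(2)}$ in the phase plane of the forced Duffing equation $-dv''+\tilde\zeta v-v^3+\tilde f=0$ (this is where the ``two'' comes from, not from two choices of background). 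A separate nondegeneracy argument, which genuinely uses $\tilde f\neq 0$, shows that the kernel of the linearization is exactly $\spann\{u_i'\}$; since $u_i'$ is odd, the IFT then applies on the even subspace. Your remark that ``$f$ breaks the $U(1)$ symmetry'' is morally right, but in your scheme $f$ has been pushed into the remainder, so at the reference point $(\varepsilon,\kappa)=(0,0)$ the symmetry \emph{is} present and the obstruction is real.
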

  \begin{remark} 
    The above theorem guarantees the existence of $\kappa_0, \epsilon_0>0$ depending on $d,\tilde \zeta, \tilde f$ with the property that 
    for $0<\kappa<\kappa_0$ and $0<\varepsilon<\varepsilon_0$ the parameter triple $(\zeta,f,\kappa)$ with $\zeta=\tilde{\zeta}\varepsilon^{-1}$ and $f=\tilde{f}\varepsilon^{-3/2}$ allows for a  localized solution of \eqref{eq:LL_realline}. For fixed $\kappa\in (0,\kappa_0)$ let us take $\varepsilon_0=\varepsilon_0(d,\tilde \zeta,\tilde f,\kappa)$ to be the largest value with the above property. Then we can consider the curve $(0,\varepsilon_0)\ni \varepsilon \mapsto (\tilde{\zeta}\varepsilon^{-1}, \tilde{f}\varepsilon^{-3/2})$ in the $(\zeta,f)$-plane. By varying the parameters $\tilde\zeta$ and $\tilde f$ these curves cover regions in the $(\zeta,f)$-plane, such that above the lower envelope $(\tilde{\zeta}\varepsilon_0^{-1},\tilde{f}\varepsilon_0^{-3/2})$ localized solutions of \eqref{eq:LL_realline} exist.
  \end{remark}
  
  The practical applicability of Theorem~\ref{thm:continuation} is demonstrated in the following. We have
  used the idea of the proof of the theorem as the basis for a numerical continuation method with
  \texttt{pde2path}.   This is done by replacing the real line with the interval $[0,\pi]$ and by considering
  the rescaled version \eqref{eq:onR} of the Lugiato-Lefever equation on $[0,\pi]$ with Neumann boundary
  conditions at the endpoints. Then, for a given fixed value of $\tilde\zeta$ and  $\tilde f=\varepsilon=\kappa=0$ the approximate solution $\i\sqrt{2\tilde\zeta}\sech(x\sqrt{\tilde\zeta/d})$ is
  continued first in $\tilde f$, then in $\varepsilon$ and finally in $\kappa$. Rescaling $a(x) =\varepsilon^{-1/2}u(\varepsilon^{-1/2}x)$ we obtain a function defined on
  $[0,\sqrt\varepsilon \pi]$ that we extend as a constant to
  $[\sqrt{\varepsilon}\pi,\pi]$. The resulting function is mirrored on the vertical axis and shifted
  by $\pi$ so that an approximate $2\pi$-periodic solution of \eqref{eq:LL_complex} for parameter values 
  $(\zeta,f)=(\tilde\zeta\varepsilon^{-1},\tilde
  f\varepsilon^{-3/2})$ is found. Refining this solution with a Newton
  step yields a periodic soliton solution $a$ solving \eqref{eq:LL_complex} on $[0,2\pi]$ for the
  parameters $(\zeta,f,\kappa)$. As an example, for fixed $d=0.1,\tilde \zeta=5$ we initially set $\tilde f=\varepsilon=\kappa=0$, and first continued the $\sech$-type soliton with respect to $\tilde f\in [0,2.9]$. For fixed $\tilde f=2.9$ the continuation is then done with respect to $\varepsilon\in [0, 0.5]$. Fixing both $\tilde f=2.9$ and $\varepsilon=0.5$ the final continuation is done in $\kappa$, and for three different values of
  $\kappa$ the resulting solutions are shown in Figure~\ref{solutions_cont}. With $\varepsilon=0.5$ the corresponding detuning and forcing values are $\zeta=\tilde\zeta\varepsilon^{-1}=10$ and $f=\tilde f\varepsilon^{-3/2}=8.20$.
%   \r{This is done by replacing the real line with the interval $[\pi,2\pi]$ and by considering
%   the rescaled version \eqref{eq:onR} of the Lugiato-Lefever equation on $[\pi,2\pi]$ with Neumann boundary
%   conditions at the endpoints. Then, for a given fixed value of $\tilde\zeta$ and for $\tilde f=0$, $\varepsilon=0$,
%   $\kappa=0$ the approximate solution $\sqrt{2\tilde\zeta}\i\sech((x-\pi)\sqrt{\tilde\zeta/d})$ is
%   continued first in $\tilde f$, then in $\varepsilon$ and finally in $\kappa$. Rescaling $a(x)
%   =\varepsilon^{-1/2}u(\varepsilon^{-1/2}x)$, $\zeta=\tilde\zeta\varepsilon^{-1}$, $f=\tilde
%   f\varepsilon^{-3/2}$, we get an approximate solution of \eqref{eq:LL_complex} on $[\pi,(1+\sqrt{\varepsilon})\pi]$. Extending $a$ as a constant function on $[(1+\sqrt{\varepsilon})\pi,2\pi]$ and reflecting it
%   in an even way around $x=\pi$ together with running a Newton step on the reflected
% solution yields periodic soliton solutions $a$ solving \eqref{eq:LL_complex} on $[0,2\pi]$ for the
%   parameters $(\zeta,f,\kappa)$. The solutions are shown in Figure~\ref{solutions_cont}.}
  
  \begin{figure}[H]
%\centering
%\begin{minipage}[t]{0.45\columnwidth}
\includegraphics[width=0.5\textwidth]{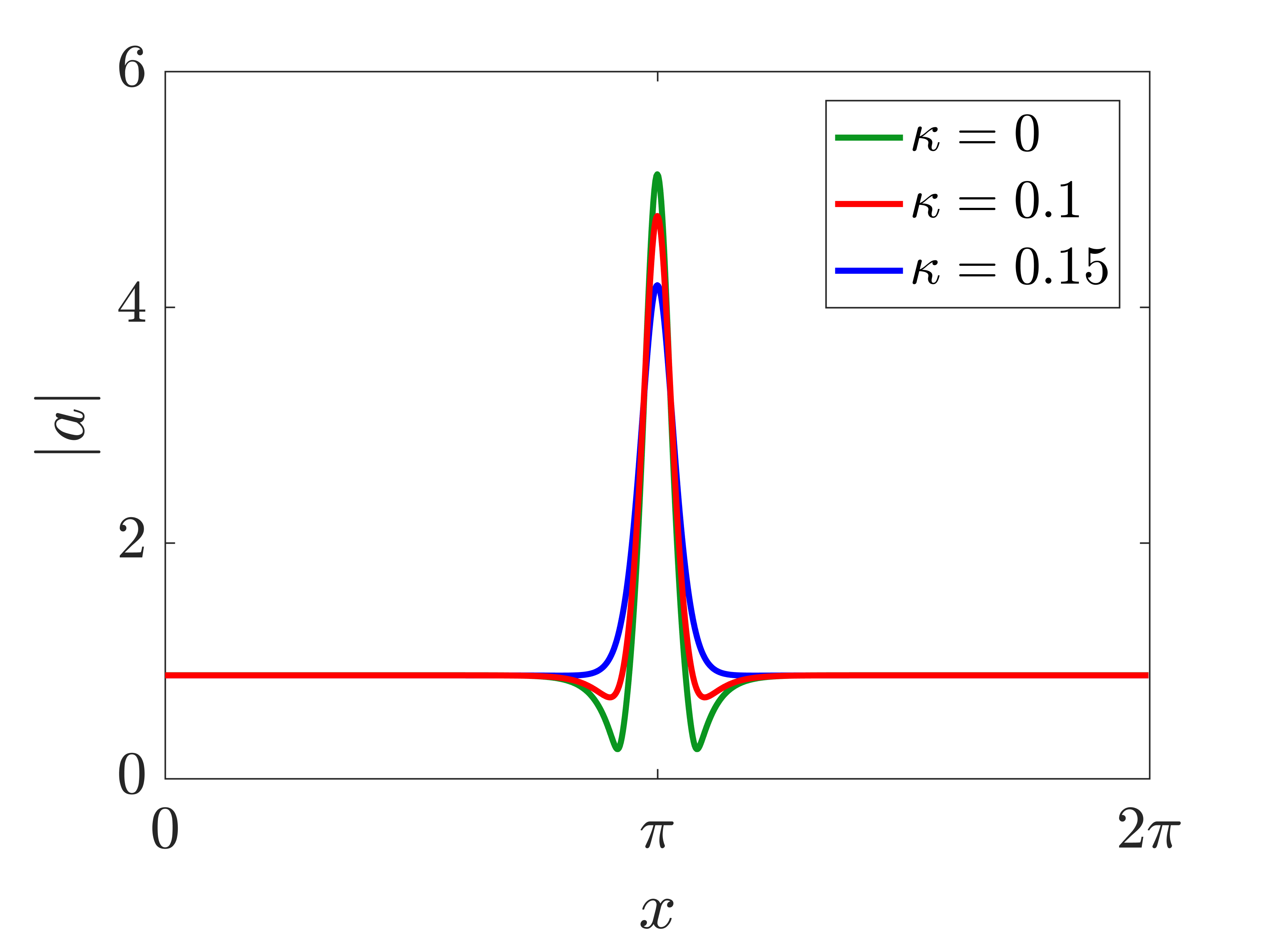}
 %\end{minipage}
 %\hfill
%\begin{minipage}[t]{0.45\columnwidth}
%\includegraphics[width=\textwidth]{solutions_cont_zoom}
 %\end{minipage}  
\caption{Solutions of \eqref{eq:LL_realline} for  $d=0.1$, $\zeta=10$, $f=8.20$, and three different values of $\kappa$.}
\label{solutions_cont}
\end{figure}
  
  One might ask if a similar result for heteroclinic solutions in the case of normal dispersion $d<0$ could
  be achieved. In Section~\ref{sec:continuation}  we will point out that this cannot be done by our
  continuation method. The above result is of perturbative nature and therefore does not reveal whether
  nontrivial solutions of~\eqref{eq:LL_realline} have to disappear for large nonlinear damping $\kappa>0$ as
  it was shown in Theorem~\ref{thm:constancy} and Theorem~\ref{thm:flow} for the case of $2\pi$-periodic
  solutions of \eqref{eq:LL_complex}. The proofs of both theorems make use of the boundedness of $[0,2\pi]$
  in an essential way. Since we do not know how to adapt these results to
  solutions on $\R$ we have to leave this question open. 
  
\section{Proof of Theorem~\ref{thm:bifurcations}} \label{sec:bifurcation}
  
%   \r{Notice that in the limiting case $\kappa=0$ we have $\tau_{\kappa,f}^*=1$ and the parametrization of the curve of trivial solutions becomes
%   the same as in Lemma~2.1 from~\cite{MaRe_aprioribounds}.}
%   

This section is structured according to the results in Theorem~\ref{thm:bifurcations}.

\subsection{Proof of (i).} Here we determine the curve of trivial solutions.

\begin{lem} \label{parametrization}
Let $\tau\in (0,1)$ be the unique value such that $\tau(1+\kappa f^2\tau)^2=1$. For $t\in (-\sqrt{\tau}, \sqrt{\tau})$ define 
$$
      A(t) := t\Big(\frac{1+4\kappa f^2\tau +3\kappa^2f^4\tau^2
        + t^2  (  -3\kappa^2f^4\tau - 2\kappa f^2) + t^4 \kappa^2 f^4}{\tau-t^2}\Big)^{1/2}.
$$
Then $t\mapsto (\zeta(t),a_0(t))$ parametrizes the curve of trivial solutions with 
\begin{align*}
\zeta(t) &:= f^2(\tau-t^2)  +  A(t), \\
a_0(t) &:= f(\tau-t^2) \big(1+\kappa f^2(\tau-t^2)  - \i A(t) \big).
\end{align*}
\end{lem}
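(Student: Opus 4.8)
The plan is to reduce \eqref{eq:LL_complex} for a constant $a_0\in\C$ to an explicit algebraic system, solve it, and verify that the stated formulas \emph{are} this solution. Writing $P:=\abs{a_0}^2$, a constant $a_0$ solves \eqref{eq:LL_complex} if and only if $\i f=\bigl((P-\zeta)+\i(1+\kappa P)\bigr)a_0$. Since $f\neq 0$ forces $a_0\neq 0$ and hence $P>0$, one may divide and also take moduli, obtaining the equivalent system
\begin{align}\label{plan:star}
  a_0=\frac{P}{f}\bigl((1+\kappa P)+\i(P-\zeta)\bigr),\qquad (P-\zeta)^2=\frac{f^2}{P}-(1+\kappa P)^2 .
\end{align}
In particular a constant solution exists for a given $P>0$ only if $h(P):=P(1+\kappa P)^2\le f^2$.

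Next I would locate $\tau$. The map $g(\sigma):=\sigma(1+\kappa f^2\sigma)^2$ is smooth and strictly increasing on $[0,\infty)$ with $g(0)=0$ and $g(\sigma)\to\infty$, so $g(\sigma)=1$ has a unique root $\tau$; since $\kappa f^2>0$ one has $g(1)=(1+\kappa f^2)^2>1$, whence $\tau\in(0,1)$. Because $h(f^2\sigma)=f^2g(\sigma)$, the admissible range of $P$ in \eqref{plan:star} is exactly $(0,f^2\tau]$, and substituting $P=f^2(\tau-t^2)$, $t\in(-\sqrt\tau,\sqrt\tau)$, is a (two-to-one, except over $t=0$) parametrization of this range. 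Writing $s:=\tau-t^2$ and $c:=\kappa f^2$, the second equation in \eqref{plan:star} becomes $(P-\zeta)^2=\tfrac1s-(1+cs)^2$, and the crux of the proof is the polynomial identity
\begin{align}\label{plan:dagger}
  t^2\cdot\frac{1+4c\tau+3c^2\tau^2+t^2(-3c^2\tau-2c)+t^4c^2}{\tau-t^2}=\frac1s-(1+cs)^2 ,
\end{align}
i.e.\ that the left-hand side equals $A(t)^2$. I expect this to be the main (and essentially only) computational obstacle: one multiplies through by $s=\tau-t^2$, replaces $t^2$ by $\tau-s$ on the left, and expands; the left side then simplifies to $\tau(1+c\tau)^2-s(1+cs)^2$, which equals $1-s(1+cs)^2$ by the defining relation $\tau(1+c\tau)^2=1$, matching the right side.

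Granting \eqref{plan:dagger} one has $(P-\zeta)^2=A(t)^2$, so $\zeta-P=\pm A(t)$; choosing $\zeta(t):=P+A(t)=f^2(\tau-t^2)+A(t)$ and inserting $P-\zeta=-A(t)$ into the first equation of \eqref{plan:star} gives precisely $a_0(t)=f(\tau-t^2)\bigl(1+\kappa f^2(\tau-t^2)-\i A(t)\bigr)$. Conversely, given any constant solution there is a unique $t\ge 0$ with $f^2(\tau-t^2)=\abs{a_0}^2$, and the sign of $\zeta-P$ selects $\pm t$; hence $t\mapsto(\zeta(t),a_0(t))$ is a bijection onto the set of constant solutions.

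Finally, for regularity I would observe that $R(t):=A(t)^2/t^2$ extends smoothly across $t=0$: by \eqref{plan:dagger} it equals $\bigl(g(\tau)-g(s)\bigr)/\bigl(s(\tau-s)\bigr)$, which is strictly positive on $(-\sqrt\tau,\sqrt\tau)$ (strict monotonicity of $g$) and tends to $g'(\tau)/\tau=(1+c\tau)(1+3c\tau)/\tau>0$ as $t\to 0$; thus $R^{1/2}$ is smooth and so is $A(t)=t\,R(t)^{1/2}$, hence $\zeta(\cdot)$ and $a_0(\cdot)$ are smooth. The parametrization is an immersion: $\tfrac{d}{dt}\abs{a_0(t)}^2=-2f^2t\neq0$ for $t\neq0$, while at $t=0$ one finds $a_0'(0)=-\i f\tau R(0)^{1/2}\neq0$ and $\zeta'(0)=R(0)^{1/2}\neq0$. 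Since $s\to0^+$ as $t\to\pm\sqrt\tau$, one has $A(t)^2=\tfrac1s-(1+cs)^2\to\infty$, so $\abs{\zeta(t)}\to\infty$ and the curve is unbounded in $H^2(\T)\times\R$; this proves the lemma and simultaneously part~(i) of Theorem~\ref{thm:bifurcations}.
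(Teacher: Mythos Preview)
Your proof is correct and follows essentially the same strategy as the paper's: parametrize $|a_0|^2=f^2(\tau-t^2)$, solve \eqref{eq:const_solutions_II} for $\zeta$, then recover $a_0$ from \eqref{eq:const_solutions_I}. The paper presents the passage from $\sqrt{\frac{1}{\tau-t^2}-(1+\kappa f^2(\tau-t^2))^2}$ to $A(t)$ without justification, whereas you supply the polynomial identity \eqref{plan:dagger} and a clean verification via the factorization $\tau(1+c\tau)^2-s(1+cs)^2$; you also treat smoothness at $t=0$, bijectivity, and unboundedness explicitly, which the paper only asserts in the remark following the lemma.
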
 

\begin{remark} 
  The curve $(\zeta,a_0):(-\sqrt{\tau},\sqrt{\tau})\to \R\times\R^2$ is smooth and unbounded in the
  $\zeta$-component. The same is true if we consider $(\zeta,a_0)$ as a map from $(-\sqrt{\tau},\sqrt{\tau})$
  into $\R\times H^2(\T)$. This is the claim of part (i) of Theorem~\ref{thm:bifurcations}.
\end{remark}

%   \r{Notice that in the limiting case $\kappa=0$ we have $\tau_{\kappa,f}^*=1$ and the parametrization of the curve of trivial solutions becomes
%   the same as in Lemma~2.1 from~\cite{MaRe_aprioribounds}.}
%   
\begin{proof}
   Constant solutions $(a_0,\zeta)$ of~\eqref{eq:LL_complex} satisfy
    \begin{equation} \label{eq:const_solutions_I}
      (\zeta-\i)a_0-(1+i\kappa)|a_0|^2a_0+\i f = 0
    \end{equation}
    and in particular 
    \begin{equation}\label{eq:const_solutions_II}
      |a_0|^2\big( (\zeta-|a_0|^2)^2+(1+\kappa|a_0|^2)^2\big) = f^2.
    \end{equation}
    Let us successively parametrize $|a_0|^2$, $\zeta$ and $a_0$. Since $(\zeta-|a_0|^2)^2\geq 0$ we obtain from \eqref{eq:const_solutions_II} that
    \begin{equation}\label{eq:const_solutions_III}
      0<|a_0|^2f^{-2}\leq \tau, 
    \end{equation} 
    for $\tau\in (0,1)$ as in the statement of the lemma. Equation~\eqref{eq:const_solutions_II} suggests the
    following parametrization of $|a_0|^2$ by $t\mapsto |a_0|^2(t) :=f^2(\tau-t^2)$ for $t\in
    (-\sqrt{\tau},\sqrt{\tau})$. The sign of $t$ is chosen according to $\sign(t)=\sign(\zeta-|a_0|^2)$. Due
    to \eqref{eq:const_solutions_II}, the value $\zeta$ can be written as follows:
    \begin{align*}
      \zeta
      &= f^2(\tau-t^2) + \zeta-|a_0|^2 \\ 
      &= f^2(\tau-t^2)  + \sign(t) |\zeta-|a_0|^2|  \\
      %&= f^2(1-t^2)  + \sign(t) \sqrt{\frac{1}{1-t^2}-(1+\kappa f^2(1-t^2))^2} \\
      &\stackrel{\eqref{eq:const_solutions_II}}{=} f^2(\tau-t^2)  + \sign(t) \sqrt{f^2|a_0|^{-2}-
      (1+\kappa|a_0|^2)^2}.
    \end{align*}
    Inserting the parametrization of $|a_0|^2(t)$ yields the following parametrization of $\zeta$
    \begin{align*} 
    \zeta(t) 
      = f^2(\tau-t^2)  + \sign(t)\sqrt{ \frac{1}{\tau-t^2} - (1+\kappa f^2(\tau-t^2))^2 } 
%        \sqrt{1-\tau^*(1+\kappa f^2\tau^*)^2 + t^2(1+4\kappa f^2 \tau^*+3\kappa^2f^4(\tau^*)^2)
%         + t^4  (  -3\kappa^2f^2\tau^* - 2\kappa f^2)
%         + t^6  \kappa^2 f^4 } \\
      = f^2(\tau-t^2)  +  A(t)
    \end{align*}
    Next we rearrange \eqref{eq:const_solutions_I} to express $a_0$ in terms of $f, \kappa, \zeta, |a_0|^2$ and use \eqref{eq:const_solutions_II} to find 
    \begin{align*}
      a_0
      &= \frac{\i f}{|a_0|^2-\zeta+i(1+\kappa|a_0|^2)}  \\
      &= \frac{-\i f(\i(1+\kappa|a_0|^2)+\zeta-|a_0|^2)}{(1+\kappa
      |a_0|^2)^2+(\zeta-|a_0|^2)^2}  \\
       &\stackrel{\eqref{eq:const_solutions_II}}{=}\frac{|a_0|^2}{f}\left(1+\kappa|a_0|^2+
       \i(|a_0|^2-\zeta)\right).
    \end{align*}
    If we insert $|a_0|^2(t)=f^2(\tau-t^2)$ and $\zeta(t)=f^2(\tau-t^2)+A(t)$ into the previous expression we
    finally arrive at 
    $$
      a_0(t) = f(\tau-t^2) \left(1+\kappa f^2(\tau-t^2)  - \i A(t) \right). 
    $$
\end{proof}
    
\subsection{Proof of (ii) -- necessary and sufficient conditions for bifurcation}
 
In order to prove (ii) we need the following preliminary result, which is a generalization of Proposition~4.3
in \cite{MaRe_aprioribounds}. It provides the necessary condition for bifurcation.

   \begin{prop} \label{prop:bifcondition}
    All bifurcation points $(\zeta,a_0)$ for \eqref{eq:LL_complex} with respect to the curve of trivial solutions satisfy 
    \begin{equation}\label{eq:nec_cond_bifurcation}
      (\zeta+dk^2)^2-4|a_0|^2(\zeta+dk^2)+3(1+\kappa^2)|a_0|^4+4\kappa|a_0|^2+1 = 0
    \end{equation}
    for some $k\in\N$. In other words, one of the two numbers $k_{1,2}$ from \eqref{integer} needs to be in $\N$. 
  \end{prop}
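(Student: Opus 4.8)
The plan is to apply the elementary necessary condition for bifurcation: at a bifurcation point the linearization of the equation with respect to $a$ cannot be invertible. Set
\[
  F(\zeta,a) := -da'' + (\zeta-\i)a - (1+\i\kappa)|a|^2a + \i f,
\]
regarded as a $C^1$ map between the \emph{real} Hilbert spaces $\R\times H^2(\T)\to L^2(\T)$. Since $|a|^2a = a^2\bar a$ involves complex conjugation, the partial derivative with respect to $a$ at a constant solution $a_0$ is the $\R$-linear (but not $\C$-linear) operator
\[
  L := D_aF(\zeta,a_0):H^2(\T)\to L^2(\T),\qquad Lb = -db'' + (\zeta-\i)b - (1+\i\kappa)\bigl(2|a_0|^2b + a_0^2\bar b\bigr).
\]
As $-d\partial_x^2+1$ is an isomorphism $H^2(\T)\to L^2(\T)$ and the remaining terms define a compact operator $H^2(\T)\to L^2(\T)$, $L$ is Fredholm of index zero, so $L$ is invertible if and only if it is injective.

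Next I would reduce ``bifurcation point'' to ``$\ker L$ contains a non-constant function.'' Decompose $H^2(\T)=\C\oplus H^2_0(\T)$ and $L^2(\T)=\C\oplus L^2_0(\T)$ into constants and zero-mean parts; $L$ respects this splitting, $L=L_c\oplus L_0$, and $L_0:H^2_0(\T)\to L^2_0(\T)$ is again Fredholm of index zero. The key observation is that for a \emph{constant} $c$ the function $F(\zeta,c)$ is itself constant, so the zero-mean component of $F(\zeta,c+b)$ vanishes at $b=0$ for every $c$ near $a_0$. Assume $L_0$ is injective, hence invertible. Applying the Implicit Function Theorem to the zero-mean component of $F(\zeta,c+b)=0$ near $(\zeta,a_0,0)$ yields a unique solution $b=b(\zeta,c)$; by the previous observation $b\equiv 0$, so every solution of $F=0$ near $(\zeta,a_0)$ has $a=c$ constant and therefore lies on the trivial curve. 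This contradicts that $(\zeta,a_0)$ is a bifurcation point with respect to the trivial curve. Hence $\ker L_0\neq\{0\}$.

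It then remains to compute when $L_0$ has a nontrivial kernel. Expanding $b=\sum_{k\neq 0}\hat b_k e^{\i kx}$ and using $\overline{\hat b_k e^{\i kx}}=\overline{\hat b_{-k}}\,e^{\i kx}$, the equation $L_0b=0$ decouples, for each $k\in\N$, into the $2\times 2$ homogeneous system in $(\hat b_k,\overline{\hat b_{-k}})$ with matrix
\[
  M_k = \begin{pmatrix} dk^2+\zeta-\i-2(1+\i\kappa)|a_0|^2 & -(1+\i\kappa)a_0^2\\[3pt] -(1-\i\kappa)\bar a_0^2 & dk^2+\zeta+\i-2(1-\i\kappa)|a_0|^2\end{pmatrix}.
\]
The two diagonal entries are complex conjugates of each other, and so are the two off-diagonal entries, so $\det M_k=|M_{11}|^2-|M_{12}|^2$, and a direct computation gives
\[
  \det M_k = (\zeta+dk^2)^2 - 4|a_0|^2(\zeta+dk^2) + 3(1+\kappa^2)|a_0|^4 + 4\kappa|a_0|^2 + 1.
\]
Thus $\ker L_0\neq\{0\}$ forces $\det M_k=0$ for some $k\in\N$, which is exactly \eqref{eq:nec_cond_bifurcation}. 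Solving this as a quadratic in $\zeta+dk^2$ gives $\zeta+dk^2 = 2|a_0|^2\pm\sqrt{(1-3\kappa^2)|a_0|^4-4\kappa|a_0|^2-1}$, equivalently $k=k_1$ or $k=k_2$ as in \eqref{integer}.

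The differentiation of the cubic term, the Fredholm property, and the determinant computation are routine. The step requiring the most care is the reduction in the second paragraph — in particular the observation that, because $a_0$ is constant, the zero-mean block of the equation is already solved by $b=0$ for all nearby constants, which is what makes the Implicit Function Theorem force $b\equiv 0$ and hence rules out bifurcation; this is the analogue of the proof of Proposition~4.3 in \cite{MaRe_aprioribounds}. One must also handle the $\R$-linearity of $L$ correctly: it is precisely the coupling of the $k$-th Fourier mode with the conjugate of the $(-k)$-th mode that produces the blocks $M_k$, and a mistake here would alter the form of the necessary condition.
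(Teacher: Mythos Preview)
Your proof is correct and follows essentially the same strategy as the paper's: show that bifurcation forces a nontrivial kernel of the linearization on some nonzero Fourier mode, and compute the resulting $2\times 2$ determinant condition.

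There are two minor organizational differences worth noting. First, the paper works with the real $2\times 2$ matrix representation of $Dg(a_0)$ acting on $(\Re z,\Im z)^T$ and computes $\det(N-dk^2\Id)$ for each mode, whereas you keep the complex Fourier expansion and couple $\hat b_k$ with $\overline{\hat b_{-k}}$ to obtain the Hermitian-structured matrix $M_k$; the two determinants agree, and your observation $\det M_k=|M_{11}|^2-|M_{12}|^2$ makes the computation particularly clean. Second, you handle the constant mode $k=0$ explicitly via the splitting $H^2(\T)=\C\oplus H^2_0(\T)$ and the Implicit Function Theorem on the zero-mean block, which rules out bifurcation whenever $L_0$ is invertible without any side discussion; the paper instead applies the Implicit Function Theorem to all of $L$ and relegates the $k=0$ case (turning points of the trivial curve) to Remark~\ref{nicht_k=0} and a reference to~\cite{MaRe_aprioribounds}. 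Your treatment of this point is self-contained and slightly more careful.
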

  
  \begin{remark} \label{nicht_k=0} 
     We exclude the case $k_1=0$ or $k_2=0$ in the bifurcation condition \eqref{eq:nec_cond_bifurcation}. 
     It happens exactly at the turning points of the curve of trivial solutions and corresponds to the
     non-injectivity of $\zeta(t)$.  Since it creates only artificial bifurcation points as explained in
     Section~4.2 in \cite{MaRe_aprioribounds}, we omit it.
  \end{remark}
  
  \begin{proof} By the implicit function theorem we know that a necessary condition for bifurcation is that the linearized operator
  \begin{equation} \label{def_L}
L = -d \frac{d^2}{dx^2} - (\i-\zeta)-Dg(a_0): H^2(\T)\to L^2(\T)
\end{equation}
has a nontrivial kernel. Here $g(a)=(1+\i\kappa)|a|^2a-\i f$ stands for the nonlinearity and $Dg(a)z := \frac{d}{dt} g(a+tz)|_{t=0}= 2(1+\i\kappa)|a|^2z+(1+\i\kappa)a^2\bar z$ with $a,z\in \C$ for the derivative of $g$ at $a$. The derivative $Dg(a)$ can also be written in the form
\begin{equation} \label{Dg}
Dg(a) z = \begin{pmatrix} \Re(a^2)+2|a|^2-\kappa \Im(a^2) & \Im(a^2)-2\kappa |a|^2+\kappa\Re (a^2) \\
\Im(a^2) +\kappa\Re(a^2)+2\kappa |a|^2 &  2|a|^2-(\Re a^2)+\kappa\Im(a^2)
\end{pmatrix} 
\begin{pmatrix} \Re z \\ \Im z \end{pmatrix}.
\end{equation} 
Since $L$ is a Fredholm operator, the space $\Kern L$ is finite dimensional, and the adjoint operator
 \begin{equation} \label{def_Lstar}
L^\ast= -d \frac{d^2}{dx^2} + (\i+\zeta)-\overline{Dg(a_0)}: H^2(\T)\to L^2(\T)
\end{equation} 
has a kernel with the same finite dimension as $\Kern L$. Any element $\phi\in\Kern L$ can be expanded in the form 
$\phi(x) = \sum_{l\in \Z} \alpha_l \e^{\i lx}$. The condition that $\phi\in\Kern L$ means that there is at
least one integer $k\in \Z$ such that $L (\alpha \e^{\i kx})=(dk^2 -\i+\zeta-Dg(a_0))\alpha \e^{\i kx} =0$
for some $\alpha\in \C\setminus\{0\}$. In other words, $dk^2$ is an eigenvalue of the matrix 
$$ 
  N= Dg(a_0) + \begin{pmatrix} -\zeta & -1 \\ 1 & -\zeta \end{pmatrix}
$$
with $Dg(a_0)$ in matrix representation given by \eqref{Dg}. Non-zero elements in $\Kern L$ exist if
$\det(-dk^2\Id+N)=0$ and computing this determinant yields \eqref{eq:nec_cond_bifurcation}. Solving for $k$ leads to $k_{1,2}$ given by \eqref{integer}. Likewise, non-zero elements in $\Kern L^\ast$ exist if $\det(-d{\tilde k}^2\Id+N^T)=0$ for some integer $\tilde k\in \N_0$.
 Solving $\det(-d{\tilde k}^2\Id+N^T)=\det(-d{\tilde k}^2\Id+N)=0$ leads to the same formula~\eqref{integer}
 as for $k$. Consequently, \eqref{integer} is equivalent to both $L$ and $L^\ast$
 having nontrivial kernels. If neither $k_1$ or $k_2$ are in $\N$ then $\Kern L=\Kern L^\ast=\{0\}$, and in
 this case the implicit function theorem, cf. \cite{Kielh_bifurcation_theory}[Theorem I.1.1], implies that
 solutions nearby the point $(\zeta,a_0)$ are unique, i.e., trivial, and hence $(\zeta_,a_0)$ cannot be a
 bifurcation point. Therefore, $k_1$ or $k_2$ in $\N$ is a necessary condition for bifurcation. 
\end{proof}

\subsection{Proof of (ii) -- simplicity of the kernel of the linearization}
Notice that $\Kern L$ is either two-dimensional or four-dimensional, since $\alpha \e^{\i kx}$ belonging to
$\Kern L$ always implies that $\alpha \e^{-\i kx}$ also belongs to $\Kern L$.  The two-dimensional case
happens if exactly one of the two numbers $k_{1,2}$ from~\eqref{integer} is an integer and the
four-dimensional case happens if both $k_1, k_2\in \N$.

\medskip

In order to achieve simple instead of multiple eigenvalues we need to change the setting
for~\eqref{eq:LL_complex} by additionally requiring $a'(0)=0$, i.e., solutions need to be even around $x=0$.
Together with $2\pi$-periodicity this implies $a'(\pi)=0$, i.e., we   consider~\eqref{eq:LL_complex}
with vanishing Neumann boundary conditions at $x=0$ and $x=\pi$. If we define $H^2_{even}(\T)$ and
$L^2_{even}(\T)$ as the subspaces of $H^2(\T)$ and $L^2(\T)$ with even symmetry around $x=0$ then $L, L^\ast:
H^2_{even}(\T) \to L^2_{even}(\T)$ are again Fredholm operators, and Propositon~\ref{prop:bifcondition} still holds. 
In this way we halve the dimension of $\Kern L$ for every $k$ satisfying~\eqref{eq:nec_cond_bifurcation} since
instead of both $\alpha \e^{\i kx}$ and $\alpha \e^{-\i kx}$ only $\alpha \cos(kx)$ remains in the kernel of
$L$. In particular, we get a one-dimensional kernel of $L$ if and only if exactly one the numbers $k_{1,2}$
from~\eqref{integer} belongs to $\N$. The same is true for the kernel of $L^\ast$.

\subsection{Proof of (ii) - computing the kernel of the linearization} Under the condition
that exactly one of the numbers $k_{1,2}$ from~\eqref{integer} belongs to $\N$ let us compute $\Kern L$ and
$\Kern L^\ast$. To describe the matrix $N-dk^2\Id$ let us introduce the real numbers $\alpha_j,
\tilde\alpha_j, \alpha_j^\ast, \tilde \alpha_j^\ast$ for $j=1,2$ as follows
\begin{multline} \label{volle_matrix}
N-dk^2\Id = \begin{pmatrix}
-\alpha_2 & \alpha_1  \\
\tilde\alpha_2 & -\tilde \alpha_1
\end{pmatrix} = \begin{pmatrix}
-\tilde\alpha_2^\ast & \alpha_2^\ast \\
\tilde \alpha_1^\ast & -\alpha_1^\ast
\end{pmatrix} \\
= \begin{pmatrix}
-\zeta-dk^2+(\Re a_0)^2+2|a_0|^2 -\kappa \Im(a_0^2) & \Im(a_0^2)-1-2\kappa|a_0|^2+\kappa\Re( a_0^2) \\
\Im(a_0^2)+1+\kappa \Re(a_0^2)+2\kappa|a_0|^2 & -\zeta-dk^2+2|a_0|^2-\Re(a_0^2)+\kappa \Im(a_0^2)
\end{pmatrix}.
\end{multline}
In the matrix $N-dk^2\Id$ the off-diagonal elements have the property that
$$
\alpha_1 <\Im(a_0^2)<\tilde\alpha_2 
$$
and hence they cannot be zero simultaneously. Therefore, if $\Im(a_0^2)\leq 0$ we can define  
\begin{equation} \label{def_alpha}
\alpha:= (\alpha_1,\alpha_2)^T, \quad 
\alpha^\ast:= (\alpha_1^\ast,\alpha_2^\ast)^T
\end{equation}
and obtain eigenvectors of $N-dk^2\Id$, $N^T-k^2\Id$, respectively, so that $\Kern L = \spann\{\alpha\e^{\i
kx}\}$, $\Kern L^\ast = \spann\{\alpha^\ast\e^{\i kx}\}$. Likewise, if $\Im(a_0^2)\geq 0$ then  
\begin{equation} \label{def_tildealpha}
\tilde\alpha:= (\tilde\alpha_1,\tilde\alpha_2)^T, \quad 
\tilde\alpha^\ast:= (\tilde\alpha_1^\ast,\tilde\alpha_2^\ast)^T
\end{equation}
are the eigenvectors of $N-dk^2\Id$, $N^T-k^2\Id$ leading to $\Kern L = \spann\{\tilde\alpha\e^{\i kx}\}$,
$\Kern L^\ast = \spann\{\tilde\alpha^\ast\e^{\i kx}\}$.

\subsection{Proof of (ii) -- tangent direction to the trivial branch of solutions} Let us assume that the
curve of trivial solutions of \eqref{eq:LL_complex} is parameterized by $t \mapsto (\zeta(t), a_0(t))$ as in Lemma~\ref{parametrization}, 
 and that $(\zeta,a_0)=(\zeta(t_0),a_0(t_0))$ is a specific bifurcation point. Let us compute the tangent
$(\dot \zeta, \dot a_0) = \frac{d}{dt} (\zeta(t), a_0(t)|_{t=t_0}$. 
As explained in Remark~\ref{nicht_k=0} we can ignore turning points where $\dot\zeta=0$.    
 Differentiating the equation $(\mathrm{i}-\zeta(t))a_0(t)+g(a_0(t))=0$ with respect to $t$ and evaluating the
derivative at $t_0$ we get 
$$ 
  \bigl(Dg(a_0)+\i-\zeta\bigr)\dot a_0 = \dot \zeta a_0.
$$
Inserting $Dg(a_0)z = (1+\i\kappa)(2|a_0|^2z+a_0^2\bar z)$ we find
$$
(2(1+\i\kappa)|a_0|^2+\i-\zeta)\dot a_0+ (1+\i\kappa)a_0^2\overline{\dot a_0} = \dot \zeta a_0
$$
and hence
\begin{equation} \label{tangent}
\dot a_0 = \tau \dot\zeta a_0 \quad\mbox{with}\quad \tau =
\frac{(1-3\i\kappa)|a_0|^2-\zeta-\i}{3(1+\kappa^2)|a_0|^4+4(\kappa-\zeta)|a_0|^2+\zeta^2+1}.
\end{equation}

\subsection{Proof of (ii) -- sufficient condition for bifurcation} 
%This section will complete the proof of part (ii) of Theorem~\ref{thm:bifurcations}. 
According to the Crandall-Rabinowitz theorem, see \cite{CrRab_bifurcation} or
\cite{Kielh_bifurcation_theory}[Theorem I.5.1], two conditions are sufficient for bifurcation. The first is that $\Kern L$ is simple, i.e. one-dimensional. Above we proved this to hold provided
$k_1\in \N$, $k_2\not \in \N$ or vice versa with $k_{1,2}$ from \eqref{integer}. 
In the following we write $k$ for the one which is the integer. In view of the statement of (ii) it
therefore remains to show that the second condition, the so-called transversality condition, is satisfied
provided~\eqref{transversality} holds. To verify this we bring our problem into the form
used in \cite{CrRab_bifurcation}. Nontrivial solutions of~\eqref{eq:LL_complex}, which are even around $x=0$
may be written as $a(\cdot)=a_0(t)+b(\cdot)$ with $b'(0)=b'(\pi)=0$. From \eqref{eq:LL_complex} we derive the
equation for the function $b$ in the form
\begin{equation} \label{ll_b}
F(t,b) := -db'' -(\i-\zeta(t))(a_0(t)+b)-g(a_0(t)+b)=0
\end{equation}
where $F: \R\times H^2_{even}(\T)\to L^2_{even}(\T)$. 
Notice that $F(t,0)=0$ for all $t$, i.e., the curve of trivial solutions $(\zeta(t),a_0(t))$ for
\eqref{eq:LL_complex} has now become the line of zero solutions $(t,0)$ for \eqref{ll_b}. Let us write
$D^2_{b,t} F(t_0,0)$ for the mixed second derivative of $F$ with respect to $(t,\zeta)$ at the point
$(t_0,0)$. According to \cite{CrRab_bifurcation}, the transversality condition is expressed by 
$$ 
  D^2_{b,t} F(t_0,0)\phi \not \in \rg D_b F(t_0,0), 
$$
with $\phi$ such that $\Kern D_b F(t_0,0)=\spann\{\phi\}$. In our case $D_b F(t_0,0)=L$, where $L$ is the linearized operator given in \eqref{def_L}. By the Fredholm alternative, $\rg L= (\Kern L^\ast)^\perp=\spann\{\phi^\ast\}^\perp$, and $\phi(x)=\alpha\cos(kx)$, $\phi^\ast(x)= \alpha^\ast\cos(kx)$ if $\Im(a_0^2)\leq 0$, cf. \eqref{def_alpha}, or 
$\phi(x)=\tilde\alpha\cos(kx)$, $\phi^\ast(x)= \tilde\alpha^\ast\cos(kx)$ with $\tilde\alpha,
\tilde\alpha^\ast$ if $\Im(a_0^2)\geq 0$, cf. \eqref{def_tildealpha}. The components of $\alpha, \alpha^\ast$
and $\tilde\alpha, \tilde\alpha^\ast$ can be read from \eqref{volle_matrix}. Since orthogonality of two
functions $u,v$ in the real Hilbert space $L^2_{even}(\T)$ means vanishing of the inner product $\langle
u,v\rangle = \Re\int_0^\pi u(x)\bar v(x)\,dx$, we find that transversality is expressed as
\begin{equation} \label{trans_expressed}
\langle D^2_{b,t} F(t_0,0)\phi,\phi^\ast\rangle= \Re \int_0^\pi \bigl(D^2_{b,t} F(t_0,0)\phi\bigr) \overline{\phi^\ast}\,dx \not =0.
\end{equation}
Using $D^2g(a_0)(z,w) = 2(1+\i\kappa)(\bar a_0 zw+a_0z\bar w+a_0\bar z w)$ we find for the second derivative
\begin{equation} \label{second_derivative}
\begin{aligned}
D^2_{b,t} F(t_0,0)\phi &= \dot\zeta\phi - D^2g(a_0) (\phi,\dot a_0) \\
&= \dot\zeta\phi - 2(1+\i\kappa) \left(\bar a_0\phi\dot a_0 +a_0\phi\overline{\dot a_0} +a_0\bar \phi\dot a_0\right)
\end{aligned}
\end{equation}
with $\dot a_0 = \tau \dot\zeta a_0$, $\tau$ from \eqref{tangent}. As explained in Remark~\eqref{nicht_k=0}
we can ignore the turning points where $\dot\zeta=0$. Hence, inserting \eqref{second_derivative} into the
transversality condition \eqref{trans_expressed} we get in case $\Im(a_0^2)\leq 0$
\begin{equation} \label{trans_expressed2}
\Re \Bigl(\alpha\overline{\alpha^\ast} -2(1+\i\kappa)(2\Re \tau |a_0|^2 \alpha\overline{\alpha^\ast}+\tau a_0^2\bar \alpha \overline{\alpha^\ast} \bigr)\Bigr)\not =0
\end{equation}
and in case  $\Im(a_0^2)\geq 0$ we replace $\alpha, \alpha^\ast$ by $\tilde\alpha, \tilde\alpha^\ast$. Let us first consider the case $\Im(a_0^2)\leq 0$. Here we obtain 
\begin{equation} \label{produkt1}
\begin{aligned}
\alpha\overline{\alpha^\ast} =&(\alpha_1+\i\alpha_2)( \alpha_1^\ast-\i\underbrace{\alpha_2^\ast}_{=\alpha_1}) = \alpha_1(\alpha_1^\ast+\alpha_2)+\i(\underbrace{\alpha_2\alpha_1^\ast}_{=\alpha_1\tilde\alpha_1^\ast}-\alpha_1^2) \\
=& \alpha_1\Bigl(2\zeta_0+2dk^2 -4|a_0|^2+ \i(2+4\kappa|a_0|^2)\Bigr),
\end{aligned}
\end{equation}
Likewise, we use \eqref{volle_matrix} and $\det(N-dk^2\Id)=0$ to compute
\begin{equation} \label{produkt2}
\begin{aligned}
\bar\alpha\overline{\alpha^\ast}=& (\alpha_1-\i \alpha_2)(\alpha_1^\ast-\i \underbrace{\alpha_2^\ast}_{=\alpha_1})
= \alpha_1(\alpha_1^\ast-\alpha_2)-\i(\alpha_1^2+\underbrace{\alpha_2\alpha_1^\ast}_{=\alpha_1\tilde\alpha_1^\ast}) \\
=& \alpha_1 2(1-\i\kappa)\bar a_0^2.
\end{aligned}
\end{equation}
Taking the expressions for 
$\alpha\overline{\alpha^\ast}$ and $a_0^2\bar\alpha \overline{\alpha^\ast}$ into the transversality condition \eqref{trans_expressed2} finally leads to 
\begin{eqnarray}
\lefteqn{\Re \Bigl(\alpha\overline{\alpha^\ast}(1-4(1+\i\kappa)\Re \tau |a_0|^2)\Bigr) 
-\Re\Bigl(\bar \alpha \overline{\alpha^\ast} 2(1+\i\kappa) \tau a_0^2\Bigr)} \nonumber\\
&=& (1-4\Re \tau |a_0|^2)\Re(\alpha\overline{\alpha^\ast})+4\kappa\Re\tau |a_0|^2 \Im(\alpha\overline{\alpha^\ast})-4\alpha_1(1+\kappa^2)|a_0|^4\Re\tau  \label{tr} \\
&=&\alpha_1 \Bigl(2\zeta_0+2dk^2-4|a_0|^2-4\Re\tau |a_0|^2( 2\zeta_0+2dk^2-3|a_0|^2(1+\kappa^2)-2\kappa)\Bigr)\not =0. \nonumber
\end{eqnarray}
Since $\Im(a_0^2)\leq 0$ implies that $\alpha_1$ is non-zero, the non-vanishing of the expression in brackets amounts to (after inserting $\Re \tau$ from \eqref{tangent}) 
$$
(\zeta-dk^2)(|a_0|^4-3\kappa^2)+(\zeta^2+1)(\zeta+dk^2-2|a_0|^2) -4\kappa |a_0|^2(|a_0|^2+dk^2) \not =0.
$$
Using \eqref{integer} we obtain the transversality condition \eqref{transversality}.

\medskip

Changes in case $\Im(a_0^2)\geq 0$ amount to replacing $\alpha_1$ in \eqref{produkt1}, \eqref{produkt2} and \eqref{tr} by $\tilde\alpha_2$, which is non-zero in this case. Therefore, the final transversality condition \eqref{transversality} is the same as before.
   
\subsection{Proof of (iii) -- nonexistence of bifurcations}  
  We assume that bifurcation for~\eqref{eq:LL_complex} occurs at some trivial solution $(\zeta,a_0)$ so
  that the claim is proved once we show $\kappa\leq \kappa_*$.
    %$\tau_{\kappa}<\tau_{\kappa}^*$ for $\tau_\kappa^*$ as defined in (i). 
    By Proposition~\ref{prop:bifcondition} we know that the quadratic equation in $\zeta+dk^2$ from
    \eqref{eq:nec_cond_bifurcation} holds for some $k\in\N_0$. In particular, the discriminant is nonnegative
    and we obtain
    \begin{equation}\label{eq:nec_cond_bifurcation_I}
      0 \leq (4|a_0|^2)^2 - 4\cdot (3(1+\kappa^2)|a_0|^4+4\kappa|a_0|^2+1) 
        = 4\big( (1-3\kappa^2)|a_0|^4-4\kappa|a_0|^2-1\big).
    \end{equation}  
    For $\kappa\geq \frac{1}{\sqrt 3}$ this inequality is unsolvable, so we necessarily have
    $\kappa\in [0,\frac{1}{\sqrt 3})$ as well as
    \begin{equation} \label{a_quadrat_unten}
      |a_0|^2\geq \frac{2\kappa+\sqrt{1+\kappa^2}}{1-3\kappa^2}.
    \end{equation}
%     $$
%       |a|^2 \in
%       \Big(-\infty,\frac{2\kappa-\sqrt{1+\kappa^2}}{1-3\kappa^2}\Big] \cup
%       \Big[\frac{2\kappa+\sqrt{1+\kappa^2}}{1-3\kappa^2},\infty\Big) 
%     $$
    On the other hand, the inequality \eqref{eq:const_solutions_III} from the proof of (i) gives $|a_0|^2\leq
    f^2\tau$ where $\tau$ is the unique value such that $\tau(1+kf^2\tau)^2=1$. Therefore 
    \begin{equation} \label{def_tildetau}
      \tilde\tau := \frac{2\kappa+\sqrt{1+\kappa^2}}{(1-3\kappa^2)f^2}\leq \frac{|a_0|^2}{f^2}
      \leq \tau.
    \end{equation}
    Since $z\mapsto z(1+\kappa f^2z)^2$ is increasing on $[0,\infty)$, we deduce from the definition of
    $\tau$ the inequality 
    $$
      \tilde\tau (1+\kappa f^2 \tilde\tau)^2 
      \leq \tau(1+\kappa f^2 \tau)^2= 1.
    $$
		Inserting $\tilde\tau$ from \eqref{def_tildetau} this is equivalent to
    $$
    \frac{2\kappa+\sqrt{1+\kappa^2}}{(1-3\kappa^2)^3} (1-\kappa^2+\kappa\sqrt{1+\kappa^2})^2 \leq f^2,
    $$
    which implies $\kappa\leq \kappa_*$ by definition of $\kappa_*$. This finishes the proof of (iii). \qed

	%\medskip

\section{Proof of Theorem~\ref{thm:constancy}}  

Following \cite{MaRe_aprioribounds} we first provide some a priori bounds in $L^\infty(\T)$ for
solutions of~\eqref{eq:LL_complex}. 

\begin{thm} \label{thm:a_priori_bounds}
Let $d\neq 0$, $\kappa>0$ and $\zeta,f\in\RR$. Then every solution $a\in C^2(\T)$
of \eqref{eq:LL_complex} satisfies
\begin{align} \label{l_unendlich_bound}
  \norm{a}_\infty\leq\left(1+2\pi^2f^2\abs{d}^{-1}\right)\min\left\{|f|,\left(\frac{|f|}{\kappa}\right)^{1/3}\right\}.
\end{align}
\end{thm}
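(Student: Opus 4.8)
The plan is to follow the a priori bound strategy of \cite{MaRe_aprioribounds} (which treats $\kappa=0$), the new ingredient being that the nonlinear damping yields an extra $L^4$--estimate responsible for the $\min\{|f|,(|f|/\kappa)^{1/3}\}$. First I would test \eqref{eq:LL_complex} with $\bar a$, integrate over $\T$, use $\int_\T a''\bar a=-\int_\T|a'|^2$, and read off the imaginary part. Since $\int_\T|a'|^2$, $\int_\T|a|^2$ and $\int_\T|a|^4$ are all real, every $d$-- and $\zeta$--contribution cancels and one is left with the clean identity
\[
  \int_\T|a|^2+\kappa\int_\T|a|^4=f\,\Re\!\int_\T a .
\]
Cauchy--Schwarz on $[0,2\pi]$ gives $\|a\|_{L^2}^2\le\sqrt{2\pi}\,|f|\,\|a\|_{L^2}$, hence $\|a\|_{L^2}\le\sqrt{2\pi}\,|f|$; Hölder gives $\kappa\|a\|_{L^4}^4\le(2\pi)^{3/4}|f|\,\|a\|_{L^4}$, hence $\|a\|_{L^4}\le(2\pi)^{1/4}(|f|/\kappa)^{1/3}$ and, after one more Hölder step, $\|a\|_{L^2}\le\sqrt{2\pi}\,(|f|/\kappa)^{1/3}$. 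Altogether $\|a\|_{L^2}\le\sqrt{2\pi}\,M$ and, by interpolation, $\|a\|_{L^3}^3\le\|a\|_{L^2}\|a\|_{L^4}^2\le 2\pi\,M\,(|f|/\kappa)^{2/3}$, where $M:=\min\{|f|,(|f|/\kappa)^{1/3}\}$; this is the origin of the factor $M$ in the claim.

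For the second step I would split $a=\mu+v$ with $\mu:=\tfrac1{2\pi}\int_\T a$. The mean is immediately controlled, $|\mu|\le\tfrac1{2\pi}\|a\|_{L^1}\le\tfrac1{2\pi}\sqrt{2\pi}\,\|a\|_{L^2}\le M$, which supplies the leading ``$1\cdot M$''. The oscillatory part $v=a-\mu$ has zero mean and (after subtracting the mean of the equation) solves $(-d\partial_x^2+\zeta-\i)v=(1+\i\kappa)\bigl(|a|^2a-\overline{|a|^2a}\bigr)$, whose right-hand side also has zero mean; inverting $-d\partial_x^2+\zeta-\i$ on the mean-zero subspace and using Young's inequality gives $\|v\|_{L^\infty}\le \|K_\zeta\|_{L^\infty}\cdot 2(1+\kappa)\|a\|_{L^3}^3$, where $K_\zeta(x)=\tfrac1{2\pi}\sum_{k\neq0}(dk^2+\zeta-\i)^{-1}\mathrm{e}^{\i kx}$ is the associated kernel. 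Feeding in the bound on $\|a\|_{L^3}^3$ from Step~1, using that $(|f|/\kappa)^{2/3}\le f^2$ whenever $\kappa f^2\ge1$, and estimating $\|K_\zeta\|_{L^\infty}\le\tfrac1{2\pi}\sum_{k\neq0}\bigl((dk^2+\zeta)^2+1\bigr)^{-1/2}\lesssim|d|^{-1}$, one lands on $\|v\|_{L^\infty}\le 2\pi^2 f^2|d|^{-1}M$ after bookkeeping the numerical constants; combined with $|\mu|\le M$ this is the assertion.

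The crux --- and the place where I would essentially reproduce the argument of \cite{MaRe_aprioribounds} --- is the linear inversion uniformly in the unbounded parameter $\zeta$: the term $(\i-\zeta)v$ forbids any naive energy estimate on $\|a'\|_{L^2}$ or $\|a''\|_{L^1}$, since these inevitably pick up a factor $\sqrt{1+\zeta^2}$ that cannot be absorbed, so $\zeta$ has to be kept inside the resolvent and the bound $\|K_\zeta\|_{L^\infty}\lesssim|d|^{-1}$ must be proved $\zeta$-uniformly (splitting the kernel sum into frequencies $k$ near and far from a possible resonance $dk^2+\zeta\approx0$). A secondary difficulty is that the nonlinearity is cubic, so any estimate of $\|\,|a|^2a\,\|$ through $\|a\|_{L^\infty}$ is circular; it is closed cleanly by the interpolation $\|a\|_{L^3}^3\le\|a\|_{L^2}\|a\|_{L^4}^2$ in the regime $\kappa f^2\ge1$ (where $M=(|f|/\kappa)^{1/3}$ and $1+\kappa<2$ is harmless because of Theorem~\ref{thm:flow}), while in the complementary regime $\kappa f^2\le1$ (where $M=|f|$ and the $L^4$--bound is too weak) one should instead treat $\i\kappa|a|^2a$ as a perturbation of $|a|^2a$ and run the linear-damping argument of \cite{MaRe_aprioribounds} essentially verbatim. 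I expect the final write-up either to split along these two regimes or, more elegantly, to package both into a single interpolation inequality covering all $\kappa>0$.
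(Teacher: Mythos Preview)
Your first step---testing \eqref{eq:LL_complex} with $\bar a$, taking the imaginary part, and reading off $\|a\|_2^2+\kappa\|a\|_4^4\le\sqrt{2\pi}\,|f|\,\|a\|_2$, hence $\|a\|_2\le\sqrt{2\pi}\,M$ with $M:=\min\{|f|,(|f|/\kappa)^{1/3}\}$---is correct and coincides with the paper's.

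The second step contains a genuine gap. The uniform kernel bound $\|K_\zeta\|_{L^\infty}\lesssim|d|^{-1}$ is \emph{false}: whenever $\zeta=-dk_0^2$ for some $k_0\in\N$, the two resonant modes $k=\pm k_0$ alone give $|K_\zeta(0)|\ge\tfrac{1}{\pi}$, so $\|K_\zeta\|_{L^\infty}\ge\tfrac{1}{\pi}$ independently of $|d|$. For large $|d|$ this is incompatible with any estimate of the form $C|d|^{-1}$, and consequently your ``bookkeeping'' cannot produce the factor $2\pi^2f^2|d|^{-1}$ of the statement. You also misidentify the method of \cite{MaRe_aprioribounds}: it is \emph{not} a linear resolvent estimate.

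What the paper actually does (following \cite{MaRe_aprioribounds}) is purely nonlinear and makes $\zeta$ disappear by a structural cancellation rather than by inverting $-d\partial_x^2+\zeta-\i$. Set $G:=-d\,\Im(a'\bar a)$; a direct computation using the equation gives $G'=|a|^2+\kappa|a|^4-f\Re a\ge-\tfrac14 f^2$ pointwise, and since $\int_0^{2\pi}G'=0$ one obtains $|G(x)-G(0)|\le\tfrac{\pi}{2}f^2$ on $[0,2\pi]$. Independently, testing the \emph{differentiated} equation with $\bar a'$ and taking the real part kills both the $d$- and the $\zeta$-contribution (each is purely imaginary after integration by parts) and leaves $\|a'\|_2^2\le-\Im\int_0^{2\pi}(|a|^2)'\,a\,\bar a'\,dx$. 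Rewriting the integrand through $G$ and inserting the oscillation bound on $G$ yields
\[
  |d|\,\|a'\|_2^2\;\le\;\Bigl|\int_0^{2\pi}(|a|^2)'\bigl(G-G(0)\bigr)\,dx\Bigr|\;\le\;\pi f^2\,\|a\|_2\,\|a'\|_2,
\]
hence $|d|\,\|a'\|_2\le\sqrt{2}\,\pi^{3/2}f^2M$, and the claim follows from $\|a\|_\infty\le M+\sqrt{2\pi}\,\|a'\|_2$. The $\zeta$-uniformity you correctly flag as the crux is thus achieved by this nonlinear identity; your linear-inversion route cannot reproduce it, and neither the regime split at $\kappa f^2=1$ nor the appeal to Theorem~\ref{thm:flow} repairs the failure of the kernel estimate at large~$|d|$.
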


\begin{remark}
One can obtain a more refined version of the bound \eqref{l_unendlich_bound} of the form 
$\norm{a}_\infty\leq\left(1+2\pi^2f^2\abs{d}^{-1}\right) C_\kappa$ where 
\begin{equation}\label{eq:defn_Ckappa}
  C_{\kappa} = \sqrt[3]{\frac{\abs{f}}{2\kappa}+\sqrt{\frac{f^2}{4\kappa^2}+\frac{1}{27\kappa^3}}}-
  \sqrt[3]{-\frac{\abs{f}}{2\kappa}+\sqrt{\frac{f^2}{4\kappa^2}+\frac{1}{27\kappa^3}}}.
\end{equation}
This follows from Cardano's formula applied to \eqref{basis_fuer_cardano}. In this paper we do not make
further use of the refined value of $C_\kappa$, since \eqref{l_unendlich_bound} already provides a meaningful
a priori bound both for small as well as for large values of $\kappa$. Indeed, as $\kappa\to 0^+$ the
$L^\infty$-bounds from (2) in~\cite{MaRe_aprioribounds} (valid for $\kappa=0$) are partially recovered.
\end{remark}

\begin{proof}
Let $a\in H^2(\T)$ be a solution of \eqref{eq:LL_complex}. Then we define the $2\pi$-periodic function $g:=-d\Im(a'\bar a)'$. Using \eqref{eq:LL_complex} we obtain
\begin{align}
g&=-d\Im(a''\bar a) \nonumber\\
&= \Im\Bigl((\i-\zeta)|a|^2+(1+\i\kappa)|a|^4-\i f\bar a\Bigr) \label{identity_g}\\
&=\abs{a}^2+\kappa\abs{a}^4-f\Re a. \nonumber
\end{align}
Using the fact that $g$ is $2\pi$-periodic together with H\"older's inequality we get from the previous identity
\begin{align}
  0=&\int_0^{2\pi}g\,dx=\int_0^{2\pi}(\abs{a}^2+\kappa\abs{a}^4-f\Re a)\, dx \nonumber \\
\geq& \kappa\norm{a}_4^4+\norm{a}_2^2-\sqrt{2\pi}\abs{f}\norm{a}_2 \label{basis_fuer_cardano}\\
\geq& \norm{a}_2\left(\frac{\kappa}{2\pi}\norm{a}_2^3+\norm{a}_2-\sqrt{2\pi}\abs{f}\right). \nonumber
\end{align}
Neglecting once the $\|a\|_2^3$ and once the $\|a\|_2$ term we obtain the $L^2$-bound
\begin{align}\label{eq:L2}
\norm{a}_2\leq \sqrt{2\pi} \tilde C_\kappa \mbox{ with } \tilde C_\kappa =
\min\left\{|f|,\left(\frac{|f|}{\kappa}\right)^{1/3}\right\}.
\end{align}
%\medskip

Next we derive a bound for $\|a'\|_2$. First, the differential equation \eqref{eq:LL_complex} yields the identity
\begin{align}
\norm{a'}_2^2=&\Re \int_0^{2\pi}\bigl(-\i da''-\i\zeta a+(\i-\kappa)|a|^2a+f\bigr)'\bar a'\,dx
 \nonumber \\
=&\Re\int_0^{2\pi} -\i d a'''\bar a'+\i(|a|^2a)'\bar a'-\kappa(|a|^2a)'\bar a'\,dx \nonumber\\
=&\Re \int_0^{2\pi} \i (|a|^2)' a\bar a'\,dx -\kappa \int_0^{2\pi} |a|^2 |a'|^2\,dx -\kappa\Re\int_0^{2\pi} (|a|^2)'a\bar a'\,dx \label{identity}\\
=&-\Im \int_0^{2\pi} (|a|^2)'a\bar a'\,dx -\kappa\int_0^{2\pi} |a|^2 |a'|^2\,dx -\frac{\kappa}{2}\int_0^{2\pi} (|a|^2)'(|a|^2)'\,dx \nonumber \\
\leq& -\Im \int_0^{2\pi} (|a|^2)'a\bar a'\,dx. \nonumber
\end{align} 
Next we set $G:=-d\Im(a'\bar a)=d\Im(\bar a' a)$ so that $G'=g$ as well as $G(0)=G(2\pi)$. 
Using the identity \eqref{identity_g} we get the pointwise estimate $g\geq-\frac{f^2}{4}$ on $[0,2\pi]$ from which we deduce
\begin{equation}
\label{eq:G}
\begin{aligned}
G(x)-G(0)&=\int_0^xg(t)\,dt\geq-\frac{\pi}{2}f^2\quad (x\in [0,2\pi])\quad\text{and} \\
G(x)-G(2\pi)&=-\int_x^{2\pi}g(t)\,dt\leq\frac{\pi}{2}f^2\quad (x\in [0,2\pi]).
\end{aligned}
\end{equation}
Using the definition of $G$ and \eqref{eq:G} we deduce from \eqref{identity}
\begin{align*}
\abs{d}\norm{a'}_2^2 
\leq & \left| d\Im\left(\int_0^{2\pi}(\abs{a}^2)'\bar
a'a\,dx\right) \right| 
= \left|\int_0^{2\pi}(\abs{a}^2)'G\,dx\right| \\
\leq &\int_0^{2\pi}(\abs{a}^2)'|G-G(0)|\,dx\\
\leq&\frac{\pi f^2}{2}\int_0^{2\pi}|(\abs{a}^2)'|\,dx=\pi f^2\int_0^{2\pi}\abs{a}\abs{a'}\,dx\\
\leq&\pi f^2\norm{a}_2\norm{a'}_2\\
\leq&\sqrt{2}\pi^{3/2}f^2 \tilde C_\kappa\norm{a'}_2
\end{align*}
with $\tilde C_\kappa$ from \eqref{eq:L2}. So we find
\begin{align}\label{eq:H1}
\abs{d}\norm{a'}_2\leq \sqrt{2}\pi^{3/2}f^2 \tilde C_\kappa.
\end{align}

\medskip

Finally, we combine the previous estimates for $\|a\|_2,\|a'\|_2$ to deduce an $L^\infty$-estimate. 
From~\eqref{eq:L2} we obtain that there is an $x_1\in [0,2\pi]$ satisfying
$\abs{a(x_1)}\leq \tilde C_\kappa$. Together with~\eqref{eq:H1} this implies
\begin{align} \label{eq:Linfty_estimate}
  \begin{aligned}
\norm{a}_\infty&\leq \abs{a(x_1)}+\norm{a-a(x_1)}_\infty\\
&\leq \tilde C_\kappa+\norm{a'}_1\\
&\leq \tilde C_\kappa+\sqrt{2\pi}\norm{a'}_2\\
&\leq\left(1+2\pi^2f^2\abs{d}^{-1}\right) \tilde C_\kappa.
  \end{aligned}
\end{align}
\end{proof}

With these bounds the constancy of solutions for large $\kappa$ is proved along the
lines of the proof of Theorem~2 in~\cite{MaRe_aprioribounds}. However, from a technical point of view, several partial
results from the proof presented in~\cite{MaRe_aprioribounds} break down and new difficulties have to be
overcome so that the proof given next contains several new aspects.  
% \begin{itemize}
% 	\item Observe that $C_\kappa$ is non-singular in $\kappa$, indeed
% \begin{align*}
% \sqrt[3]{A+\sqrt{B}}-\sqrt[3]{-A+\sqrt{B}}=\frac{2A}{(A+\sqrt{B})^{2/3}+(A+\sqrt{B})^{1/3}(-A+\sqrt{B})^{1/3}+(-A+\sqrt{B})^{2/3}}
% \end{align*}
% where $A:=\frac{\abs{f}}{2\kappa}$ and $B:=\frac{f^2}{4\kappa^2}+\frac{1}{27\kappa^3}$. Hence
% \begin{align*}
% \lim_{\kappa\to 0}C_\kappa=\abs{f}
% \end{align*}
% \end{itemize}

\medskip

\noindent
\begin{proof}[Proof of Theorem~\ref{thm:constancy}] We equip the real Hilbert space $H^1(\T)$ with the inner product generated
by the norm 
\begin{equation}\label{eq:def_H1norm}
  \|\phi\|_{H^1}^2:= \gamma \norm{\phi'}_2^2+\norm{\phi}_2^2
   \qquad\text{for }\phi\in H^1(\T)
\end{equation} 
where $\gamma>0$ will be suitably chosen later. We observe that
a solution $a:[0,2\pi]\to\CC$ of \eqref{eq:LL_complex} is constant if and only if the function
$A=a'$ is trivial. Since $a$ solves \eqref{eq:LL_complex} the function $A$ is a $2\pi$-periodic solution of the differential equation
%even $A_1(0)=A_2(0)=0$. Does this help?
\begin{equation}\label{eq:bvp_A1A2}
 -dA''=(\i-\zeta)A+ 2(1+\i\kappa)|a|^2 A+(1+\i\kappa)a^2\bar A.
\end{equation}
We introduce the differential operator $L_\kappa: H^2(\T)\subset L^2(\T)\to L^2(\T)$ by
\begin{equation}\label{eq:def_Lkappa}
  L_\kappa B 
  :=  -dB'' - (\i-\zeta)B-2\i\kappa |a|^2B-\i\kappa a^2\bar B
\end{equation}
so that \eqref{eq:bvp_A1A2} may be rewritten as 
\begin{equation}\label{eq:bvp_A1A2_fixedpointformulation}
  L_\kappa A = 2|a|^2A+a^2 \bar A.
\end{equation}   
The fact that $L_\kappa^{-1}:L^2(\T)\to H^1(\T)$ exists as a bounded
linear operator will follow from the injectivity of $L_\kappa$, since $L_\kappa$ is a Fredholm operator of
index $0$. The injectivity is a consequence of the following estimate. For $g\in L^2(\T)$ let $B\in H^2(\T)$
satisfy $L_\kappa B=g$. Testing with $\bar B$ yields 
$$ 
  \int_0^{2\pi} \Big( d |B'|^2 -(\i-\zeta)|B|^2-2\i\kappa |a|^2 |B|^2- \i\kappa a^2 \bar B^2\Big) \,dx =
  \int_0^{2\pi} g \bar B\,dx.
$$
Taking the real and imaginary part of this equation implies 
\begin{align}
d\norm{B'}_2^2+\zeta\norm{B}_2^2+\kappa \Im\int_0^{2\pi} a^2\bar B^2\,dx &= \Re\int_0^{2\pi}g\bar B\,dx,\label{eq:1} \\
\norm{B}_2^2+\kappa\int_0^{2\pi}\Bigl(\underbrace{2|a|^2|B|^2+\Re(a^2 \bar B^2)}_{\geq |a|^2|B|^2}\Bigr)\,dx &= -\Im\int_0^{2\pi}g\bar B\,dx.\label{eq:2}
\end{align}
From \eqref{eq:2} and $\kappa\geq 0$ we get $\|B\|_{2} \leq \|g\|_2$. Together with \eqref{eq:1}, \eqref{eq:2} we obtain
\begin{align*}
  \abs{d}\norm{B'}_2^2+\sign(d)\zeta\norm{B}_2^2 -
  \kappa\int_0^{2\pi}|a|^2|B|^2\,dx&\leq\norm{g}_2^2,\\
  \norm{B}_2^2+\kappa\int_0^{2\pi}|a|^2|B|^2\,dx &\leq\norm{g}_2^2.
\end{align*}
Multiplying the second equation with $\sigma\geq 1$ and summing up both equations we finally get 
\begin{align*}
  \abs{d}\norm{B'}_2^2
  +(\sigma+\sign(d)\zeta)\norm{B}_2^2 \leq (\sigma+1) \norm{g}_2^2. 
\end{align*}
Choosing $\sigma$ sufficiently large and $\gamma$ from~\eqref{eq:def_H1norm} sufficiently small we obtain $\|B\|_{H^1}^2 \leq 4 \|g\|_2^2$. This implies in particular the injectivity of $L_\kappa$, consequently the boundedness of $L_\kappa^{-1}: L^2(\T)\to H^1(\T)$ and finally also the norm bound $\|L_\kappa^{-1}\| \leq  2$ uniformly in $\kappa>0$. 

\medskip

Having proven this bound, we turn to the task to prove that solutions 
$A$ of \eqref{eq:bvp_A1A2} are trivial for $\kappa>\kappa^*$. In view
of~\eqref{eq:bvp_A1A2_fixedpointformulation} we define the bounded linear operator
$$
  K_a B := L_\kappa^{-1}\Big( 2|a|^2B+a^2\bar B\Big): 
  L^2(\T)\to L^2(\T).
$$ 
It remains to show that its operator norm is smaller than 1, because then $K_a$ is a contraction and therefore admits a unique fixed point $A$, which must be the trivial one. Since 
$$
\|2|a|^2B+a^2\bar B\|_2^2 = \int_0^{2\pi} \Big( 5|a|^4 |B|^2+ 2 |a|^2\bar a^2 B^2+2|a|^2a^2\bar B^2\Big)\,dx
\leq 9\|a\|_\infty^4 \|B\|_2^2 $$
we find that
\begin{align*}
\norm{K_a}
  \leq 3\|L_\kappa^{-1}\|\norm{a}_\infty^2   
  \stackrel{\eqref{eq:Linfty_estimate},\eqref{eq:L2}}{\leq} 6\left(1+2\pi^2f^2\abs{d}^{-1}\right)^2
  \Big(\frac{f^2}{\kappa}\Big)^{2/3},
\end{align*}
which is smaller than 1 for $\kappa>\kappa^*$. This finishes the proof. 
\end{proof}

\section{Proof of Theorem~\ref{thm:flow}}

Let us first recall a global existence and uniqueness results in the case $\kappa=0$. It is shown in
Theorem~2.1 in~\cite{jami:14} that \eqref{eq:LLE_timedependent_standard} with $a(0)=\phi\in H^4(\T)$ has a
unique solution $a\in C(\R_+,H^4(\T))\cap C^1(\R_+,H^2(\T))\cap C^2(\R_+,L^2(\T))$. The proof of this result
may be adapted to the case $\kappa>0$ since the crucial estimate (6) in that paper is even better when
$\kappa>0$ given that the damping effect is stronger. The remaining parts of the proof need not be modified
so that we get the same estimates and gobal well-posedness result as in~\cite{jami:14} also in the case
$\kappa>0$.
Since we will need the inequality $\|a(t)\|_2\leq \max\{\sqrt{2\pi}|f|,\|a(0)\|_2\}$ in the proof of our
convergence results, let us prove this first.  For notational convenience we suppress the spatial variable in
our notation.

\medskip

For any given solution $a$ of~\eqref{eq:LLE_timedependent} the following estimate holds
  \begin{align*}
    \frac{d}{dt} \left(\frac{\|a(t)\|_2^2}{2}\right)
    =&  \Re\left( \int_0^{2\pi} a_t(t)\ov{a(t)}\,dx \right) \\
    \stackrel{\eqref{eq:LLE_timedependent}}{=}&  \Real\left( \int_0^{2\pi} 
   \Big( (- 1-\i\zeta+(\i-\kappa)|a(t)|^2)a(t)+ f +\i da_{xx}(t)\Big)  \ov{a(t)} \,dx \right)\\
   =& - \|a(t)\|_2^2 - \kappa \|a(t)\|_4^4 + f  \int_0^{2\pi} \Real(a(t))\,dx \\
   \leq& - \|a(t)\|_2^2-\frac{\kappa}{2\pi}\|a(t)\|_2^4 +\sqrt{2\pi}|f|\|a(t)\|_2.
  \end{align*}
  So $\|a(t)\|_2$ decreases provided the last term is negative. Since this is true is precisely for
  $\|a(t)\|_2\geq \sqrt{2\pi}\tilde C_\kappa$ by~\eqref{basis_fuer_cardano},\eqref{eq:L2},  we conclude
  \begin{equation}\label{eq:L2_est}
    \|a(t)\|_2 
    \leq \max\{ \sqrt{2\pi}\tilde C_\kappa, \|a(0)\|_2\}
    \stackrel{\eqref{eq:L2}}{\leq} \max\{ \sqrt{2\pi}|f|, \|a(0)\|_2\} 
    \qquad\text{for all }t\geq 0.
  \end{equation}

\medskip  

% The well-known $L^2$-boundedness of $a$ is due to the inequality  
% \begin{align*}
%   \frac{d}{dt} \left(\frac{\|a(t)\|_2^2}{2} \right)
%   &= \int_0^{2\pi}\partial_t a(t)\cdot a(t)   \\ 
%   &= - \int_0^{2\pi}(1+\kappa |a(t)|^2)|a(t)|^2 + f\int_0^{2\pi}a_1(t)  \\
%   &\leq - \|a(t)\|_2^2 - \kappa \|a(t)\|_4^4+ \sqrt{2\pi}|f|\|a(t)\|_2 \\
%   &\leq -\|a(t)\|_2^2 - \frac{\kappa}{2\pi}\|a(t)\|_2^4 + \sqrt{2\pi}|f|\|a(t)\|_2,     
% \end{align*}
% which implies \r{with $C_\kappa$ as in the proof of Theorem~\ref{thm:constancy}}
% \begin{equation}
%   \|a(t)\|_2\leq \sqrt{2\pi}C_\kappa \qquad\text{for all }t\geq 0.
% \end{equation}
Furthermore, using the equation for $a$ and integration by parts we get 
\begin{align*}
  \frac{d}{dt} \left( \frac{\|a_x(t)\|_2^2}{2}\right)  
  =& \Re \left(\int_0^{2\pi}  a_{xt}(t) \ov{a_x(t)} \,dx \right)\\
  =& - \Re\left(\int_0^{2\pi}  a_t(t) \ov{a_{xx}(t)} \,dx \right)\\
  \stackrel{\eqref{eq:LLE_timedependent}}{=}& - \Real\left( \int_0^{2\pi} 
  \Big( (- 1-\i\zeta+(\i-\kappa)|a(t)|^2)a(t)+ f +\i da_{xx}(t)\Big)  \ov{a_{xx}(t)} \,dx \right)\\  
  =& - \int_0^{2\pi} |a_x(t)|^2 \,dx - \kappa \int_0^{2\pi}|a(t)|^2|a_x(t)|^2 \,dx - 2\kappa
  \int_0^{2\pi} \Re\big(a(t) \ov{a_x(t)}\big)^2 \,dx \\
  &  - 2\int_0^{2\pi} \Im\big(a(t)\ov{a_x(t)}\big)\Re\big(a(t)\ov{a_x(t)}\big) \,dx.
\end{align*}
Writing $a\bar a_x=s+\i r$ and using the scalar inequality
\begin{equation}\label{eq:flow_scalar_ineq}
   - \kappa (s^2+r^2) - 2\kappa s^2 - 2sr 
   \leq \underbrace{(-2\kappa+\sqrt{1+\kappa^2})}_{=:\alpha_\kappa}(s^2+r^2) \qquad (s,r\in\R)
\end{equation}
we  get the estimate 
$$
  \frac{d}{dt} \left(\frac{\|a_x(t)\|_2^2}{2}\right)
  \leq - \|a_x(t)\|_2^2 +  \alpha_\kappa  \int_0^{2\pi}
  |a(t)|^2|a_x(t)|^2 \,dx
  \qquad\text{for all }t\geq 0. 
$$
Since we assumed $\kappa\geq \frac{1}{\sqrt 3}$, we have $\alpha_\kappa\leq 0$ so that
$\|a_x(t)\|_2^2$ decays exponentially to 0. The Poincar\'{e}-Wirtinger inequality implies
$\|a(t)-\frac{1}{2\pi}\int_0^{2\pi} a(t)\,dx \|_2$ decays exponentially as $t\to \infty$. 
The $L^2$-boundedness of $a(t)$ derived in~\eqref{eq:L2_est} now implies that the sequence
$\int_0^{2\pi} a(t) \,dx$ is bounded, hence $a(t_m)$ converges in $L^2(\T)$ for some sequence $t_m\nearrow\infty$ to some
constant solution $a^*$ of \eqref{eq:LL_complex}. It remains to prove that this actually implies the
convergence of the whole sequence.

\medskip 

By the fundamental theorem of calculus we get 
\begin{equation}\label{eq:aux_est}
  \|a(t)-a^*\|_\infty
  \leq \|a_x(t)\|_1 + \min_{[0,2\pi]} |a(t)-a^*| 
  \leq \sqrt{2\pi}\|a_x(t)\|_2 + \frac{1}{\sqrt{2\pi}} \|a(t)-a^*\|_2. 
\end{equation}
In particular, the subsequence $a(t_m)$ converges uniformly to the constant $a^*$. 
 So for any given $\delta\in (0,1)$ we can find an $\eps>0$ such that all $h\in\C$ with
$|h|<\eps$ satisfy the inequality
\begin{align}\label{eq:flow_scalarineq}
  \begin{aligned}
  & \Re \left( (\i-\kappa)\Big( |a^*+h|^2(a^*+h)- |a^*|^2a^*\Big)\ov{h}\right) \\
  &= -\kappa|a^*|^2|h|^2 - 2\kappa \Bigl(\Re \big(a^*\ov h\big)\Bigr)^2   -2 \Im\big(a^* \ov h) \Re(a^*\ov
  h) + O(|h|^3)  \\
  &\stackrel{\eqref{eq:flow_scalar_ineq}}{\leq } \alpha_\kappa |a^*|^2|h|^2  + O(|h|^3) \\
  &\leq \delta |h|^2.
\end{aligned}
\end{align}
Here we used $\alpha_\kappa\leq 0$. Choosing $t_m$ large enough we
can achieve 
\begin{equation}\label{eq:choice_tm}
  \|a(t_m)-a^*\|_2  \leq \frac{\sqrt{2\pi}}{4}\eps\qquad\text{and}\qquad  
  \|a_x(t)\|_2 \leq \frac{1}{4\sqrt{2\pi} }\eps \quad\text{for all }t\geq t_m.
\end{equation}
So the function $h(t):= a(t)-a^*$ satisfies for $t\geq t_m$  
the following  differential inequality provided $\|h(t)\|_\infty\leq \eps$
\begin{align*}
  \frac{d}{dt} \left(\frac{\|h(t)\|_2^2}{2}  \right)
  &= \Re\left( \int_0^{2\pi}\partial_t h(t) \ov{h(t)}  \,dx\right)  \\
  &\stackrel{\eqref{eq:LLE_timedependent}}{=}
    - \|h(t)\|_2^2  +  \Re\left( (\i-\kappa)
    \int_0^{2\pi} \big( |a^*+h(t)|^2(a^*+h(t))-   |a^*|^2a^* \big)\ov{h(t)}  \,dx \right) \\
  &\stackrel{\eqref{eq:flow_scalarineq}}{\leq} (-1+\delta) \|h(t)\|_2^2.
\end{align*}
Given that $\|h(t_m)\|_\infty\leq \frac{\sqrt{2\pi}}{4}\eps<\eps$ we infer that
$\|h(t)\|_2=\|a(t)-a^*\|_2$ decreases on some maximal interval $(t_m,t_m+T)$ and we want to show
$T=\infty$.
From \eqref{eq:choice_tm} we infer $$
 \|h(t)\|_2  \leq \frac{\sqrt{2\pi}}{4}\eps,\quad 
  \|h_x(t)\|_2 \leq \frac{1}{4\sqrt{2\pi}}\eps \qquad\text{for all }t\in [t_m,t_m+T]
$$  
so that~\eqref{eq:aux_est} implies 
$$
  \|h(t)\|_\infty 
  \leq \sqrt{2\pi}\cdot \frac{1}{4\sqrt{2\pi} }\eps + \frac{1}{\sqrt{2\pi}}\cdot
  \frac{\sqrt{2\pi}}{4}\eps
  \leq \frac{\eps}{2}<\eps
  \qquad\text{for all }t\in [t_m,t_m+T].
$$
As shown above, this implies that $\|h(t)\|_2$ is decreasing on a right neighbourhood of $t_m+T$. So we
conclude that there cannot be a finite maximal $T$ with the property mentioned above. As a consequence, $T=\infty$, $\|h(t)\|_2$
is decreasing on $[t_m,\infty)$ and we obtain $\|a(t)-a^*\|_{H^1(\T)}=\|h(t)\|_{H^1(\T)}\to 0$ as
claimed. This finishes the proof.\qed

\medskip

We add an extension of this result that covers damping parameters $\kappa<\frac{1}{\sqrt 3}$. In this case we
may obtain the convergence of the flow provided the initial condition $\phi=a(0)$ has the property that  
$\|\phi_x\|_2$ and $\|\phi\|_2$ are not too large.

\begin{lem}\label{lem:flow}
  Assume $d\neq 0, \zeta,f\in\R$ and $\kappa<\frac{1}{\sqrt 3}$. Assume
  that the initial condition  $a(0)=\phi\in H^4(\T)$  satisfies
  \begin{equation}\label{eq:flow_initcondition_assumption}
    2\pi \|\phi_x\|_2 + \max\{\sqrt{2\pi} |f|,\|\phi\|_2\} <\sqrt{\frac{2\pi}{\alpha_\kappa}}. 
  \end{equation}
  Then the uniquely determined solution $a\in C(\R_+,H^4(\T))$ of~\eqref{eq:LLE_timedependent} converges in $H^1(\T)$ to a constant. 
\end{lem}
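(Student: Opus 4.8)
The plan is to re-run the proof of Theorem~\ref{thm:flow}, carefully tracking the term $\alpha_\kappa\int_0^{2\pi}|a|^2|a_x|^2\,dx$, which now has the \emph{wrong} sign since $\alpha_\kappa=-2\kappa+\sqrt{1+\kappa^2}>0$ for $\kappa<1/\sqrt3$, and to show that the smallness hypothesis~\eqref{eq:flow_initcondition_assumption} forces this term to stay strictly dominated by the damping term $-\|a_x\|_2^2$. First I would isolate the two ingredients of the proof of Theorem~\ref{thm:flow} that do not use the sign of $\alpha_\kappa$: the $L^2$ bound $\|a(t)\|_2\leq m:=\max\{\sqrt{2\pi}|f|,\|\phi\|_2\}$ from~\eqref{eq:L2_est}, valid for all $t\geq0$, and the differential inequality
$$
  \frac{d}{dt}\Bigl(\tfrac12\|a_x(t)\|_2^2\Bigr)\leq -\|a_x(t)\|_2^2+\alpha_\kappa\int_0^{2\pi}|a(t)|^2|a_x(t)|^2\,dx,
$$
which is exactly the computation preceding~\eqref{eq:flow_scalar_ineq}. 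Combining $\int_0^{2\pi}|a|^2|a_x|^2\,dx\leq\|a\|_\infty^2\|a_x\|_2^2$ with the interpolation estimate $\|a(t)\|_\infty\leq\frac1{\sqrt{2\pi}}\|a(t)\|_2+\sqrt{2\pi}\|a_x(t)\|_2$ (this is~\eqref{eq:aux_est} with $a^\ast=0$) and the $L^2$ bound, one gets that on any time interval where $\|a_x(t)\|_2\leq\|\phi_x\|_2$ one has $\|a(t)\|_\infty\leq C_0:=\frac{m}{\sqrt{2\pi}}+\sqrt{2\pi}\|\phi_x\|_2$. The key bookkeeping remark is that~\eqref{eq:flow_initcondition_assumption}, divided by $\sqrt{2\pi}$, is precisely the inequality $\alpha_\kappa C_0^2<1$; put $\lambda:=1-\alpha_\kappa C_0^2>0$.

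Next I would close a continuity/bootstrap loop. Let $T^\ast$ be the supremum of times $T$ with $\|a_x(t)\|_2\leq\|\phi_x\|_2$ on $[0,T]$. On $[0,T^\ast)$ the two displayed facts give
$$
  \frac{d}{dt}\Bigl(\tfrac12\|a_x(t)\|_2^2\Bigr)\leq(-1+\alpha_\kappa C_0^2)\|a_x(t)\|_2^2=-\lambda\|a_x(t)\|_2^2\leq0,
$$
so $\|a_x(t)\|_2$ is non-increasing there and the defining bound self-improves; hence it cannot first be violated at a finite $T^\ast$, forcing $T^\ast=\infty$. Consequently $\|a(t)\|_\infty\leq C_0$ for all $t\geq0$ and, integrating the last inequality, $\|a_x(t)\|_2^2\leq\|\phi_x\|_2^2\,\e^{-2\lambda t}\to0$.

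Finally I would conclude convergence exactly as in the proof of Theorem~\ref{thm:flow}: the Poincar\'e--Wirtinger inequality together with the $L^2$ bound yields, along some sequence $t_m\to\infty$, convergence $a(t_m)\to a^\ast$ in $L^2(\T)$ to a constant solution $a^\ast$ of~\eqref{eq:LL_complex}, and from $\|a(t)\|_\infty\leq C_0$ we get $|a^\ast|\leq C_0$, so $\alpha_\kappa|a^\ast|^2\leq\alpha_\kappa C_0^2=1-\lambda<1$. This is exactly the inequality replacing the hypothesis $\alpha_\kappa\leq0$ in the stability part of that proof: the scalar estimate behind~\eqref{eq:flow_scalarineq} then gives, for $\delta\in(0,\lambda)$ and $|h|$ small, $\Re\bigl((\i-\kappa)(|a^\ast+h|^2(a^\ast+h)-|a^\ast|^2a^\ast)\bar h\bigr)\leq(\alpha_\kappa|a^\ast|^2+\delta)|h|^2$, whence $h(t)=a(t)-a^\ast$ satisfies $\frac{d}{dt}\bigl(\tfrac12\|h(t)\|_2^2\bigr)\leq(-\lambda+\delta)\|h(t)\|_2^2$ as long as $\|h(t)\|_\infty$ is small; the same bootstrap as in the proof of Theorem~\ref{thm:flow} (using $\|h_x(t)\|_2=\|a_x(t)\|_2\to0$ and $\|h(t_m)\|_2\to0$ to keep $\|h(t)\|_\infty$ small) then gives $\|h(t)\|_2\to0$, and combined with $\|a_x(t)\|_2\to0$ this is $\|a(t)-a^\ast\|_{H^1}\to0$.

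The hard part is really just aligning the constants: one must notice that~\eqref{eq:flow_initcondition_assumption} is precisely $\alpha_\kappa C_0^2<1$ with $C_0$ the interpolation bound for $\|a\|_\infty$ that is \emph{self-consistent} along the trajectory (it presupposes $\|a_x(t)\|_2\leq\|\phi_x\|_2$, which is what the bootstrap establishes), and that the single margin $\lambda=1-\alpha_\kappa C_0^2$ simultaneously drives the decay of $\|a_x(t)\|_2$ and makes the stability argument of Theorem~\ref{thm:flow} go through with $|a^\ast|^2$ in place of $0$.
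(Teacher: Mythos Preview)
Your proposal is correct and follows essentially the same approach as the paper. The paper's proof is much terser: it writes the differential inequality with the prefactor $-1+\alpha_\kappa(\sqrt{2\pi}\|a_x(t)\|_2+\frac{1}{\sqrt{2\pi}}\max\{\sqrt{2\pi}|f|,\|\phi\|_2\})^2$, notes this is negative at $t=0$ by~\eqref{eq:flow_initcondition_assumption}, invokes ``monotonicity'' to keep it negative for all $t$, and then says ``we conclude as above.'' Your bootstrap via $T^\ast$ is exactly this monotonicity argument made explicit, and your observation that $|a^\ast|\leq C_0$ so $\alpha_\kappa|a^\ast|^2<1$ fills in what the paper's ``conclude as above'' actually requires (since the stability step in the proof of Theorem~\ref{thm:flow} used $\alpha_\kappa\leq 0$, which is no longer available).
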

\begin{proof}
  We argue as above. Using the same estimate as in the above proof we get now for $\alpha_\kappa>0$
  \begin{align*}
  \frac{d}{dt} \left(\frac{\|a_x(t)\|_2^2}{2}\right)
  &\leq - \|a_x(t)\|_2^2  + \alpha_\kappa \int_0^{2\pi} |a(t)|^2|a_x(t)|^2 \\
  &\leq (-1+\alpha_\kappa\|a(t)\|_\infty^2) \|a_x(t)\|_2^2\\
  &\stackrel{\eqref{eq:aux_est}}{\leq}  \left(-1+\alpha_\kappa
  (\sqrt{2\pi}\|a_x(t)\|_2+\frac{1}{\sqrt{2\pi}}\|a(t)\|_2)^2\right) \|a_x(t)\|_2^2  \\
  &\stackrel{\eqref{eq:L2_est}}{\leq} \left(-1+\alpha_\kappa
  (\sqrt{2\pi}\|a_x(t)\|_2+\frac{1}{\sqrt{2\pi}} \max\{\sqrt{2\pi}|f|,\|a(0)\|_2\})^2\right)
  \|a_x(t)\|_2^2.
  \end{align*}
  So the prefactor is negative for small $t>0$ by assumption \eqref{eq:flow_initcondition_assumption}.
  Hence, by monotonicity, it remains negative for all $t>0$ and we conclude as above. 
\end{proof}

We do not know whether the above convergence result is sharp in the sense that there are initial data
causing non-convergence or even blow-up in infinite time. As above we moreover infer that all nonconstant
stationary solutions $a$ for $\kappa<\frac{1}{\sqrt 3}$ satisfy 
$$
    2\pi \|a_x\|_2 + \max\{\sqrt{2\pi} |f|,\|a\|_2\} \geq  \sqrt{\frac{2\pi}{\alpha_\kappa}}.
$$

% Notice that this inequality + Groenwall + Sobolev imbedding show that $a_x$ blows up in $L^2$ iff $a$ blows up
% in $H^1$ if and only if $a$ blows up in $L^\infty$. 

\section{Proof of Theorem~\ref{thm:continuation}} \label{sec:continuation}

In this section we discuss \eqref{eq:LL_realline} in the case of anomalous dispersion $d>0$, and we will
prove the existence of solitary-type localized solutions. At the end of this section we explain why our method fails in the case of normal dispersion $d<0$.

\medskip

Let us consider a rescaled version of~\eqref{eq:LL_realline} given by 
\begin{equation} \label{eq:onR}
  -du''+(\tilde{\zeta}-\varepsilon \mathrm i)u-(1+\mathrm i\kappa)\abs{u}^2u+\mathrm i\tilde{f}=0 \quad
  \text{on }\mathbb{R}, \qquad u'(0)=0
\end{equation}
for $d,\tilde{\zeta}>0$ and $\varepsilon,\kappa\geq 0$.
Notice that $u$ solves~\eqref{eq:onR} with $\tilde{\zeta},\tilde f$
  if and only if $a(x):=\varepsilon^{-1/2}u(\varepsilon^{-1/2}x)$ solves~\eqref{eq:LL_complex} with
 $\zeta=\tilde{\zeta}\varepsilon^{-1}$ and $f=\tilde{f}\varepsilon^{-3/2}$ on $\mathbb{R}$.

\medskip

We consider solutions of \eqref{eq:onR} of the form $u=\tilde u+u^\infty$, where 
$\tilde u$ belongs to the space $H^2_{even}(\RR;\CC)$ of even complex-valued $H^2$-functions on the real
line and $u^\infty:=\lim_{|x|\to\infty} u(x)$ solves the algebraic equation
\begin{align}\label{eq:alg}
(\tilde{\zeta}-\varepsilon \mathrm i)u-(1+\mathrm i\kappa)\abs{u}^2u+\mathrm i\tilde{f}=0.
\end{align}
The strategy is to find two purely imaginary solutions of \eqref{eq:onR} in the special case
$\varepsilon=\kappa=0$ and to continue them into the situation $\varepsilon,\kappa>0$ via the implicit
function theorem. More precisely, Theorem~\ref{thm:continuation} is proved once we have
shown Theorem~\ref{thm:continuation_of_homoclinics} below.

\medskip

Let us begin with the case $\epsilon=\kappa =0$, where we consider solutions of
\begin{align} \label{eq:onR_zero}
  -du''+\tilde{\zeta} u-\abs{u}^2u+\mathrm i\tilde{f}=0 \quad
  \text{on }\mathbb{R},\qquad u'(0)=0
\end{align}
and where $u^\infty=\lim_{|x|\to \infty} u(x)$ satisfies 
\begin{align}\label{eq:alg_zero}
\tilde{\zeta}u-\abs{u}^2u+\mathrm i\tilde{f}=0.
\end{align} 
We will always work in the setting where \eqref{eq:alg_zero} has three distinct solutions. Let us briefly explain why this is fulfilled for 
$0\leq|\tilde{f}|<\frac{2\sqrt{3}}{9}\tilde{\zeta}^{3/2}$. Clearly, \eqref{eq:alg_zero} only has purely
imaginary solutions $u^\infty = \i v^\infty$, where $v^\infty\in \R$ solves
\begin{align}\label{eq:alg1}
-\tilde{\zeta}v+v^3=\tilde{f}.
\end{align} 
The function $v \mapsto -\tilde{\zeta}v+v^3$ has the local minimum $-\frac{2\sqrt{3}}{9}\tilde{\zeta}^{3/2}$ and
the local maximum $\frac{2\sqrt{3}}{9}\tilde{\zeta}^{3/2}$. Therefore, if
$0\leq |\tilde{f}|<\frac{2\sqrt{3}}{9}\tilde{\zeta}^{3/2}$ then there are three distinct solutions $v^{(j)}$,
$j=1,2,3$ of \eqref{eq:alg1} with $v^{(1)}<-\frac{\sqrt{\zeta}}{\sqrt{3}}<v^{(2)}<\frac{\sqrt{\zeta}}{\sqrt{3}}<v^{(3)}$.

\medskip

Next let us discuss the existence of two homoclinic solutions of \eqref{eq:onR_zero}. Their nondegeneracy will be proved in Proposition~\ref{prop:nondeg}.

\begin{prop}\label{prop:continuation}
Let $d,\tilde{\zeta}>0$ and $0\leq |\tilde{f}|<\frac{2\sqrt{3}}{9}\tilde{\zeta}^{3/2}$. 
There exist two purely imaginary and even solutions $u_i=\tilde{u}_i+u_i^\infty$
 of~\eqref{eq:onR_zero} with $\tilde{u}_i\in H^2_{even}(\RR;\CC)$  for $i=1,2$  and $x\Im(u_1')>0$ and $x\Im(u_2')<0$ on $\mathbb{R}\backslash\{0\}$.
\end{prop}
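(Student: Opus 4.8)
Since every coefficient in \eqref{eq:onR_zero} is real apart from the forcing term $\mathrm i\tilde f$, the natural ansatz for a purely imaginary solution is $u=\mathrm i v$ with $v:\RR\to\RR$. Substituting and dividing by $\mathrm i$, equation \eqref{eq:onR_zero} becomes the autonomous conservative equation
\begin{equation}\label{eq:plan_newton}
  d\,v''=\tilde\zeta v-v^3+\tilde f=-W'(v),\qquad W(v):=\tfrac14 v^4-\tfrac{\tilde\zeta}{2}v^2-\tilde f v,
\end{equation}
with conserved energy $E(v,v')=\tfrac d2(v')^2+W(v)$, while $u'(0)=0$ turns into $v'(0)=0$. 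The equilibria of \eqref{eq:plan_newton} are exactly the roots of $v^3-\tilde\zeta v=\tilde f$, i.e. the three numbers $v^{(1)}<v^{(2)}<v^{(3)}$ of \eqref{eq:alg1}, which (as recalled in the excerpt) satisfy $v^{(1)}<-\sqrt{\tilde\zeta/3}<v^{(2)}<\sqrt{\tilde\zeta/3}<v^{(3)}$ under the standing hypothesis $0\le|\tilde f|<\tfrac{2\sqrt3}{9}\tilde\zeta^{3/2}$. Linearising \eqref{eq:plan_newton} at $v^{(j)}$ gives $d\,\delta''=(\tilde\zeta-3(v^{(j)})^2)\delta$, so $v^{(2)}$ is a hyperbolic saddle (because $(v^{(2)})^2<\tilde\zeta/3$), while $v^{(1)}$ and $v^{(3)}$ are centres. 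I will construct exactly two homoclinic orbits to $v^{(2)}$, one staying below and one staying above $v^{(2)}$.

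\textbf{Phase-plane construction.} Since $W'(v)=(v-v^{(1)})(v-v^{(2)})(v-v^{(3)})$, the quartic $W$ is strictly decreasing on $(-\infty,v^{(1)})$, strictly increasing on $(v^{(1)},v^{(2)})$, strictly decreasing on $(v^{(2)},v^{(3)})$, strictly increasing on $(v^{(3)},\infty)$, and $W(v)\to+\infty$ as $|v|\to\infty$. Put $c:=W(v^{(2)})$. From $W(v^{(1)})<c$ one sees that on $(-\infty,v^{(2)})$ the equation $W(v)=c$ has a unique solution $v_-<v^{(1)}$, with $W<c$ on $(v_-,v^{(2)})$ and $W>c$ on $(-\infty,v_-)$; symmetrically there is a unique $v_+>v^{(3)}$ with $W(v_+)=c$ and $W<c$ on $(v^{(2)},v_+)$. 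Let $v_1$ solve \eqref{eq:plan_newton} with $v_1(0)=v_-$, $v_1'(0)=0$; then $E(v_1,v_1')\equiv c$ and $d\,v_1''(0)=-W'(v_-)>0$, so $v_1$ starts increasing. Wherever $v_1\in(v_-,v^{(2)})$ we have $(v_1')^2=\tfrac2d(c-W(v_1))>0$, hence $v_1'$ never vanishes for $x>0$; thus $v_1$ is strictly increasing and bounded above by $v^{(2)}$, so it converges to some $\ell\in(v_-,v^{(2)}]$ with $W(\ell)=c$, which forces $\ell=v^{(2)}$. It cannot reach $v^{(2)}$ in finite time (uniqueness would then give $v_1\equiv v^{(2)}$), so $v_1$ is defined on all of $\RR$, and extending evenly (consistent by uniqueness at $x=0$) we get $v_1(x)\nearrow v^{(2)}$ as $|x|\to\infty$. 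An identical argument starting from $v_2(0)=v_+$, $v_2'(0)=0$ yields an even solution $v_2$ that is strictly decreasing on $(0,\infty)$ with $v_2(x)\searrow v^{(2)}$ as $|x|\to\infty$.

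\textbf{Conclusion and main difficulty.} Since $v^{(2)}$ is hyperbolic, the relation $v_i'=\mp\sqrt{\tfrac2d(c-W(v_i))}$ together with $c-W(v)\sim\tfrac12|W''(v^{(2)})|\,(v-v^{(2)})^2$ near $v^{(2)}$ (equivalently, the stable-manifold theorem) shows that $v_i(x)-v^{(2)}$ and all its derivatives decay exponentially as $|x|\to\infty$. Setting $u_i:=\mathrm i v_i$, $u_i^\infty:=\mathrm i v^{(2)}$ and $\tilde u_i:=u_i-u_i^\infty=\mathrm i(v_i-v^{(2)})$, we obtain $\tilde u_i\in H^2_{even}(\RR;\CC)$, and $u_i^\infty$ solves \eqref{eq:alg_zero} because $v^{(2)}$ solves \eqref{eq:alg1}. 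The monotonicity of $v_i$ gives $x\,\Im(u_1')=x\,v_1'(x)>0$ and $x\,\Im(u_2')=x\,v_2'(x)<0$ on $\RR\setminus\{0\}$, and the two solutions are distinct because $u_1(0)=\mathrm i v_-\neq \mathrm i v_+=u_2(0)$. The only point demanding genuine care is the phase-plane bookkeeping in the middle step: pinning down the global shape of $W$, the existence and the correct placement (one on each side of $v^{(2)}$) of the turning points $v_\pm$, and the fact that the resulting trajectories limit onto the saddle rather than escaping or turning back — all of which follow from the ordering of the three equilibria and the sign of $W'$ between them. (As a consistency check, for $\tilde f=0$ one has $v^{(2)}=0$ and $v_{1,2}=\mp\sqrt{2\tilde\zeta}\,\sech(x\sqrt{\tilde\zeta/d})$, the familiar bright LLE solitons.)
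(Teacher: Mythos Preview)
Your proof is correct and follows essentially the same phase-plane approach as the paper: both reduce to the real Newtonian equation for $v=\Im u$, identify $v^{(2)}$ as a saddle via the sign of $3(v^{(2)})^2-\tilde\zeta$, and obtain the two homoclinics from the conserved energy/first integral. Your version is more explicit than the paper's (you locate the turning points $v_\pm$, start the trajectories there, and argue the monotone convergence and exponential decay in detail), whereas the paper simply invokes the symmetry of the stable and unstable manifolds of the saddle; but the underlying argument is the same.
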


\begin{proof}
Looking for purely imaginary even homoclinic solutions $u=\i v$ of \eqref{eq:onR_zero} means that we need to find a real-valued even homoclinic solution $v$ of  
\begin{align}\label{eq:transf}
-dv''+\tilde{\zeta}v-v^3+\tilde{f}=0 \quad \text{on }\R.
\end{align}
The corresponding first integral is given by
\begin{align*}
I(v',v):=-dv'^2 + \tilde{\zeta}v^2-\frac{1}{2}v^4+2\tilde{f}v.
\end{align*}
All trajectories of \eqref{eq:transf} are therefore bounded in the $(v,v')$-plane and symmetric with respect
to the $v$-axis. Moreover, every trajectory crosses the $v$-axis.

\medskip

The equilibria of \eqref{eq:transf} are given by the solutions of the algebraic equation~\eqref{eq:alg1}. As we have seen, there are three
distinct real-valued solutions $v^{(j)}, j=1,2,3$ for
$0\leq |\tilde{f}|<\frac{2\sqrt{3}}{9}\tilde{\zeta}^{3/2}$.
The eigenvalues of the linearization in $v^{(j)}$ satisfy
\begin{equation*}
\lambda_{1,2}^{(j)}=\pm\sqrt{-\Delta^{(j)}}/\sqrt{d} \qquad\text{with }\Delta^{(j)}:=-\tilde{\zeta}+3({v^{(j)}})^2.
\end{equation*}
The linear stability analysis, which allows us to characterize the equilibria of the nonlinear system, reduces to the analysis of $\Delta^{(j)}$. Observe that we have $-\tilde{\zeta}+3v^2<0$ on $(-\frac{\sqrt{\zeta}}{\sqrt{3}},\frac{\sqrt{\zeta}}{\sqrt{3}})$ and $-\tilde{\zeta}+3v^2>0$ on $\RR\backslash[-\frac{\sqrt{\zeta}}{\sqrt{3}},\frac{\sqrt{\zeta}}{\sqrt{3}}]$. Hence, for $v^{(1)}<v^{(2)}<v^{(3)}$, we have $\Delta^{(j)}>0$ for $j=1,3$ and $\Delta^{(j)}<0$ for $j=2$. This means that for $j=2$ we have
two real eigenvalues of opposite sign, and the equilibrium is an unstable saddle. For $j=1,3$, we have purely imaginary eigenvalues of opposite sign, and hence, these equilibria are stable centers surrounded by periodic orbits. 

\medskip

Since the unstable manifold of the saddle is symmetric around the $v$-axis it connects to the stable manifold
and thus provides the two homoclinic orbits.
\end{proof}

\begin{figure}
\centering
\begin{tikzpicture}
  %straight lines
  \draw[->=.4] (-4,0) to (4,0) node[below] {$v$};
  \draw[->=.6] (0,-2) to (0,2) node[right] {$v'$}; 
  \draw[-<-=.5] (-3,0) to [out=-90,in=-140] (-0.6,0);
	\draw[->-=.5] (-3,0) to [out=90,in=140] (-0.6,0);
  \draw[-<-=.5] (-0.6,0) to [out=-40,in=-90] (3,0);
	\draw[->-=.5] (-0.6,0) to [out=40,in=90] (3,0);
 \end{tikzpicture}
\caption{Homoclinic orbits for $\varepsilon=\kappa=0$}
\end{figure}
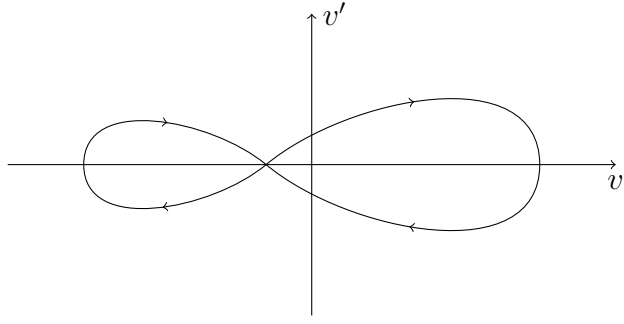

For the following nondegeneracy result let us recall from Section~\ref{sec:bifurcation}  the notation
$g(u)=|u|^2u-\i f$, $Dg(u)z = 2|u|^2 z+ u^2\bar z$ for $u,z\in \C$. 

\begin{prop}\label{prop:nondeg} 
Let $d,\tilde{\zeta}>0$ with $0< |\tilde{f}|<\frac{2\sqrt{3}}{9}\tilde{\zeta}^{3/2}$. If $u_1, u_2$ are the two homoclinic solutions 
of \eqref{eq:onR_zero} then 
$$
\operatorname{ker}_{H^2(\mathbb{R};\CC)}\Bigl(-d\frac{d^2}{dx^2}+\tilde{\zeta}-Dg(u_i)\Bigr)
  = \operatorname{span} \{u_i'\}
$$
for $i=1,2$.  
\end{prop}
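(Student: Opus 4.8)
The plan is to exploit that both $u_i$ are purely imaginary: write $u_i=\i v_i$, where $v_i$ is the real, even homoclinic solution of $-dv''+\tilde\zeta v-v^3+\tilde f=0$ provided by Proposition~\ref{prop:continuation}, and observe that the linearized operator $\mathcal L_i:=-d\frac{d^2}{dx^2}+\tilde\zeta-Dg(u_i)$ then decouples. Since $|u_i|^2=v_i^2$ and $u_i^2=-v_i^2$, one computes $Dg(\i v_i)(p+\i q)=v_i^2p+3\i v_i^2q$ for real $p,q$, so that for $z=p+\i q$
\[
  \mathcal L_i z = L_i^- p + \i\,L_i^+ q,\qquad L_i^\pm:=-d\tfrac{d^2}{dx^2}+\tilde\zeta-(2\pm1)v_i^2 \ \text{ on }H^2(\RR;\RR).
\]
Hence $\operatorname{ker}_{H^2(\RR;\CC)}\mathcal L_i=\operatorname{ker}_{H^2}L_i^-\oplus\i\,\operatorname{ker}_{H^2}L_i^+$, and because $u_i'=\i v_i'$ the proposition is equivalent to the two statements $\operatorname{ker}_{H^2}L_i^+=\operatorname{span}\{v_i'\}$ and $\operatorname{ker}_{H^2}L_i^-=\{0\}$.

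Both $L_i^\pm$ are Schrödinger operators whose potential tends at $\pm\infty$ to a strictly positive limit ($\tilde\zeta-(v^{(2)})^2$ for $L_i^-$, $-\Delta^{(2)}$ for $L_i^+$, both positive since $(v^{(2)})^2<\tilde\zeta/3$); thus $0$ lies below the essential spectrum and every kernel element, together with its derivative, vanishes at $\pm\infty$. For $L_i^+$ the argument is then standard: differentiating the equation for $v_i$ gives $L_i^+v_i'=0$ with $v_i'\not\equiv0$, any two solutions of the second order equation $L_i^+\psi=0$ have constant Wronskian $\psi v_i''-\psi'v_i'$, which vanishes upon letting $x\to+\infty$, so every $H^2$-solution is a multiple of $v_i'$. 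This yields $\operatorname{ker}_{H^2}L_i^+=\operatorname{span}\{v_i'\}$.

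The crucial step is $\operatorname{ker}_{H^2}L_i^-=\{0\}$, and this is precisely where $\tilde f\neq0$ is needed: for $\tilde f=0$ one has $L_i^-v_i=0$, the kernel element forced by the gauge invariance $u\mapsto\e^{\i\theta}u$ that the forcing destroys. I would argue in two moves. First, using $L_i^+v_i'=0$ one computes $L_i^-v_i'=2v_i^2v_i'$; by Proposition~\ref{prop:continuation} the sign condition on $\Im(u_i')=v_i'$ says $v_i'\neq0$ on $(0,\infty)$, so the ground state substitution $\phi=v_i'\,\eta$ gives, for every $\phi\in C_c^\infty((0,\infty))$,
\[
  \langle L_i^-\phi,\phi\rangle_{L^2(0,\infty)}=\int_0^\infty 2v_i^2\phi^2\dx+\int_0^\infty d\,(v_i')^2(\eta')^2\dx>0\qquad(\phi\not\equiv0),
\]
so the Dirichlet realization of $L_i^-$ on any half-line $(a,\infty)$ with $a\ge0$ (and symmetrically on $(-\infty,b)$, $b\le0$) is strictly positive. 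Consequently a nonzero $p\in\operatorname{ker}_{H^2(\RR)}L_i^-$ cannot vanish anywhere on $[0,\infty)$ — otherwise its restriction to the corresponding half-line would be a nonzero $H^1_0$ zero mode of a strictly positive operator — nor on $(-\infty,0]$; splitting $p$ into even and odd parts and noting that an odd element vanishes at $0$, we conclude $p$ is even and of one sign, say $p>0$ on $\RR$. For the second move, the identity $L_i^-v_i=-\tilde f$ together with $L_i^-p=0$ gives, integrating by parts (the boundary terms vanish since $p,p'$ and $v_i'$ vanish at $\pm\infty$ while $v_i$ stays bounded),
\[
  -\tilde f\int_{\RR}p\dx=\int_{\RR}(L_i^-v_i)\,p\dx=\int_{\RR}v_i\,(L_i^-p)\dx=0,
\]
hence $\int_{\RR}p\dx=0$, contradicting $p>0$. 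Therefore $\operatorname{ker}_{H^2}L_i^-=\{0\}$, which with the previous paragraph proves the proposition.

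I expect the only genuine obstacle to be the nondegeneracy of the "real-direction" operator $L_i^-$: one has to recognize that it becomes invertible solely because $\tilde f\neq0$, and combine the half-line strict positivity coming from $L_i^-v_i'=2v_i^2v_i'$ with the mean-zero obstruction coming from $L_i^-v_i=-\tilde f$. Everything else is routine: the decoupling computation, the Wronskian argument for $L_i^+$, the decay of $H^2$-kernel elements justifying the integrations by parts, and the fact that $v_i$ is real-analytic and not identically zero, so that $\int 2v_i^2\phi^2=0$ forces $\phi\equiv0$.
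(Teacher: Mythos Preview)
Your proof is correct and follows essentially the same route as the paper: decouple the linearization into the real operators $L_i^\pm$, handle $\ker L_i^+$ by the Wronskian argument, and eliminate $\ker L_i^-$ by first showing any nonzero element is sign-definite and then deriving $\int_\RR p=0$ from $L_i^- v_i=-\tilde f$. The only cosmetic difference is that the paper establishes the no-zero property of $\varphi_1$ via a direct Sturm-type comparison with $v_i'$ (and then a min-max comparison of $L_1,L_2$ for the odd part), whereas you package both steps into strict positivity of the Dirichlet form of $L_i^-$ via the ground-state substitution $\phi=v_i'\eta$; the underlying mechanism is the same.
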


\begin{remark} \label{remark_on_nondeg} Here $\operatorname{ker}_{H^2(\mathbb{R};\CC)}(-d\frac{d^2}{dx^2}+\tilde{\zeta}-Dg(u_i))$ refers to the kernel of the differential operator on the domain $H^2(\mathbb{R};\CC)$.
As a consequence, if we set the domain of the differential operator as
$H^2_{even}(\mathbb{R};\CC)$ we get 
$\operatorname{ker}_{H^2_{even}(\RR;\CC)}(-d\frac{d^2}{dx^2}+\tilde{\zeta}-Dg(u_i)) = \{0\}$ for $i=1,2$. This is true because
$H^2_{even}(\RR;\CC)$ only contains functions with $u'(0)=0$ so that $u_i'\not \in H^2_{even}(\RR;\CC)$ for
$i=1,2$ because \eqref{eq:onR_zero} yields
$$ 
  du_i''(0)=\tilde{\zeta}u_i(0)-|u_i(0)|^2u_i(0)+\i\tilde{f}.
$$
 The latter expression is non-zero since $u_i(0)\neq \i v^{(j)}$ for $j=1,2,3$ and $\i v^{(1)}, \i v^{(2)}, \i
 v^{(3)}$ are the solutions of \eqref{eq:alg1}. Note also that the proposition applies only for $\tilde
f\not =0$ because for $\tilde f=0$ scaling with a complex phase factor produces another degeneracy so that $\spann\{u_i', \i u_i\}\subset \operatorname{ker}_{H^2(\mathbb{R};\CC)}(-d\frac{d^2}{dx^2}+\tilde{\zeta}-Dg(u_i))$ and this time $\i u_i\in H^2_{even}(\RR;\CC)$. 
\end{remark}

\begin{proof}
We prove nondegeneracy only for $i=1$. Since $u_1'\in H^2(\mathbb{R};\CC)$, we may differentiate \eqref{eq:onR_zero} to see that
$u_1'\in\operatorname{ker}_{H^2(\mathbb{R};\CC)}\Bigl(-d\frac{d^2}{dx^2}+\tilde{\zeta}-Dg(u_1)\Bigr)$.

\medskip 

For the converse inclusion, let
$\varphi=\varphi_1+i\varphi_2 \in\operatorname{ker}_{H^2(\mathbb{R};\CC)}\Bigl(-d\frac{d^2}{dx^2}+\tilde{\zeta}-Dg(u_1)\Bigr)$ 
for real-valued $\varphi_1,\varphi_2$. Then we have 
\begin{align}
\Bigl(-d\frac{d^2}{dx^2}+\tilde{\zeta}-v_1^2\Bigr)\varphi_1&=0 \quad\mbox{on } \R, \label{eq:sys1}\\
\Bigl(-d\frac{d^2}{dx^2}+\tilde{\zeta}-3v_1^2\Bigr)\varphi_2&=0 \quad\mbox{on } \R, \label{eq:sys2}
\end{align}
and we need to show that $\varphi_1=0$ and that $\varphi_2$ is a real multiple of $v_1'$. Due to \eqref{eq:transf} we also see that 
\begin{equation} \label{eq:for_v1strich}
-d v_1''' + (\tilde\zeta-3v_1^2)v_1'=0 \mbox{ on } \R \quad \mbox{ and } \quad\int_0^\infty d(v_1'')^2 -(\tilde\zeta-3v_1^2)(v_1')^2\,dx =0.
\end{equation}
 We split $\varphi_1:=\varphi_{1,even}+\varphi_{1,odd}$ into even and odd part. Then we observe that
$\varphi_{1,even}\in H^2(\RR;\RR)$ solves~\eqref{eq:sys1} with $\varphi_{1,even}'(0)=0$ and that
$\varphi_{1,odd}\in H^2(\RR;\RR)$ solves~\eqref{eq:sys1} with $\varphi_{1,odd}(0)=0$.

\medskip

Let us first show that either $\varphi_{1,even}\equiv 0$ or $\varphi_{1,even}$  has no zero on $\R$. 
Indeed, if $\varphi_{1,even}$ had a first positive zero $x_0>0$ with $\varphi_{1,even}(x_0)=0$ then w.l.o.g.
$\varphi_{1,even}>0$ on $(0,x_0)$. Since $\varphi_{1,even}(x)\to 0$ as $x\to \infty$ there is $x_1\in
(x_0,\infty]$ such that $\varphi_{1,even}<0$ on $(x_0,x_1)$ and $\lim_{x\to x_1} \varphi_{1,even}(x)=0$. If
we multiply the differential equation in \eqref{eq:for_v1strich} by $\varphi_{1,even}$ and subtract
\eqref{eq:sys1} multiplied by $v_1'$ then we find
\begin{align*}
0&=\int_{x_0}^{x_1}-d(v_1'''\varphi_{1,even}-\varphi_{1,even}''v_1')\,dx-\int_{x_0}^{x_1} 2v_1^2\underbrace{v_1'}_{>0}\underbrace{\varphi_{1,even}}_{<0}\,dx \\
&\geq-\int_{x_0}^{x_1} d\frac{d}{dx}(v_1''\varphi_{1,even}-\varphi_{1,even}'v_1')\,dx\\
&=-d\paren{\underbrace{\varphi_{1,even}'(x_0)}_{<0}\underbrace{v_1'(x_0)}_{>0}-\underbrace{\varphi_{1,even}'(x_1)}_{\geq 0}\underbrace{v_1'(x_1)}_{\geq 0}}>0.
\end{align*}
This is impossible and proves the assertion that $\varphi_{1,even}$ has no zero on $\R$. An almost identical
argument applied to $\varphi_{1,odd}$  provides the alternative $\varphi_{1,odd}\equiv 0$ or $\varphi_{1,odd}$ has no zero on $(0,\infty)$.

\medskip

Now suppose $\varphi_{1,odd}\not\equiv 0$. Then $\varphi_{1,odd}\in H^1_0((0,\infty);\R)$
is w.l.o.g a positive Dirichlet eigenfunction to the eigenvalue $0$ of
$L_1:=-d\frac{d^2}{dx^2}+\zeta-v_1^2$ on $(0,\infty)$.
Observe that  $v_1'\in H^1_0((0,\infty);\R)$ is a positive Dirichlet eigenfunction of $L_2:=-d\frac{d^2}{dx^2}+\zeta-3v_1^2$ on $(0,\infty)$
corresponding to the smallest eigenvalue $0$. We also have the following inequality between the quadratic forms of $L_2$ and $L_1$
\begin{align*}
\int_0^\infty d(\phi')^2+(\zeta-3v_1)^2\phi^2\,dx<\int_0^\infty d(\phi')^2+(\zeta-v_1)^2\phi^2\,dx
\end{align*}
for all $\phi\in H^1_0((0,\infty);\R)\setminus\{0\}$. 
Therefore, by Poincar\'{e}'s min-max principle we obtain a strict ordering between the Dirichlet eigenvalues
of $L_1$ and $L_2$, and hence the smallest Dirichlet eigenvalue of $L_1$ is strictly positive, which yields a contradiction. This
implies  $\varphi_{1,odd}\equiv 0$.

\medskip

Let us now consider the even part $\varphi_{1,even}$ and suppose that $\varphi_{1,even}$ has no zero on $(0,\infty)$. 
Testing \eqref{eq:transf} with $\varphi_{1,even}$, integrating twice we obtain
\begin{align*}
0&\neq-\int_0^\infty \tilde{f}\varphi_{1,even}\,dx=\int_0^\infty -dv_1''\varphi_{1,even}+(\zeta-v_1^2)v_1\varphi_{1,even}\,dx\\
&=\int_0^\infty -dv_1\varphi_{1,even}''+(\zeta-v_1^2)v_1\varphi_{1,even}\,dx=0.
\end{align*}
This is a contraction, and hence, $\varphi_{1,even}\equiv 0$. Together with $\varphi_{1,odd}\equiv 0$ we finally see $\varphi_1\equiv 0$.

\medskip

Now we show that $\varphi_2$ is a multiple of $v_1'$. Multiplying the differential equation in \eqref{eq:for_v1strich} with $\varphi_2$, 
\eqref{eq:sys2} with $v_1'$ and subtracting we obtain
\begin{align*}
0=-d(v_1'''\varphi_2-\varphi_2''v_1')=-d\frac{d}{dx}(v_1''\varphi_2-\varphi_2'v_1').
\end{align*}
As both $v_1'$ and $\varphi_2$ together with their derivatives vanish at infinity we obtain
\begin{align*}
v_1''\varphi_2-\varphi_2'v_1'=0,
\end{align*}
which means that $\varphi_2$ is a multiple of $v_1'$. This concludes the proof of the proposition.
\end{proof}

Now we will continue the purely imaginary nontrivial solutions $u_1, u_2$ of \eqref{eq:onR_zero} from Proposition~\ref{prop:nondeg} into the range 
where $\varepsilon,\kappa>0$.  For the proof of the final result, we rewrite~\eqref{eq:onR} for $u=\tilde u
+u^\infty_{\epsilon,\kappa}$ with $\tilde u\in H$ as follows
\begin{align} \label{eq:rewrite}
-d\tilde{u}''+\tilde{\zeta}\tilde{u}-\varepsilon\mathrm i\tilde{u}- (1+\mathrm i
\kappa)(g(\tilde{u}+u_{\varepsilon,\kappa}^\infty)-g(u_{\varepsilon,\kappa}^\infty))=0.
\end{align}
Here $u^\infty_{\varepsilon,\kappa}$ is given as the continuation of $\i v^{(2)}$ into the range of
$\varepsilon, \kappa>0$. Note that three distinct solutions of~\eqref{eq:alg} persists for small
$\varepsilon,\kappa>0$.
%
%
%\sout{where $G(x):=\abs{x}^2x$, $G'(x)z:=2\abs{x}^2z-x^2\overline{z}$ for $x,z\in\mathbb{C}$ and
%$u_{\varepsilon,\kappa}^\infty\in\CC$ solves~\eqref{eq:alg}. Observe, that the latter is only an equation in
%$\tilde{u}$, as $u_{\varepsilon,\kappa}^\infty\in\mathbb{C}$ can always be computed explicitly.}
%
%\r{\texttt{Sp\"ater ... oder auch gar nicht bringen ...} Let us remark, that it is not possible to construct such solutions for $\varepsilon=\kappa=0$ for $d<0$ in this way, as there are no homoclinic orbits in this case.}

\begin{thm} \label{thm:continuation_of_homoclinics}
  Let $u_1=\tilde u_1+\i v^{(2)}, u_2=\tilde u_2+\i v^{(2)}$ be the two solutions of \eqref{eq:onR} for
  $(\varepsilon,\kappa)=(0,0)$ from Proposition~\ref{prop:continuation}. Then there exist open neighborhoods
  $U_i$ of $\tilde{u}_i$ in  $H^2_{even}(\RR;\CC)$, $J_i$ of $(0,0)$ in $\mathbb{R}\times\mathbb{R}$ such
  that~\eqref{eq:rewrite} is uniquely solvable for $(\tilde u,\epsilon, \kappa)$ in $U_i\times J_i$, $i=1,2$.
\end{thm}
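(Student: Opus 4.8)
\medskip

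The plan is to solve \eqref{eq:rewrite} in a neighborhood of $(\tilde u_i,0,0)$, $i=1,2$, by the Implicit Function Theorem. Let
\[
  G\colon H^2_{even}(\RR;\CC)\times\RR\times\RR\to L^2_{even}(\RR;\CC),\qquad (\tilde u,\varepsilon,\kappa)\longmapsto\text{left-hand side of }\eqref{eq:rewrite},
\]
where $u^\infty_{\varepsilon,\kappa}$ denotes the branch of solutions of the algebraic equation \eqref{eq:alg} with $u^\infty_{0,0}=\i v^{(2)}$. First I would check that this branch exists and depends smoothly (indeed real-analytically) on $(\varepsilon,\kappa)$ near $(0,0)$: the derivative in $u$ of the left-hand side of \eqref{eq:alg} at $(\i v^{(2)},0,0)$ equals $\tilde\zeta\,\Id-Dg(\i v^{(2)})$, which in the real representation $\CC\cong\RR^2$ is $\operatorname{diag}\bigl(\tilde\zeta-(v^{(2)})^2,\ \tilde\zeta-3(v^{(2)})^2\bigr)$ with both entries positive since $0\le(v^{(2)})^2<\tilde\zeta/3$, hence invertible, so the Implicit Function Theorem applies to \eqref{eq:alg}. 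Next, because $H^2(\RR)$ is a Banach algebra continuously embedded in $C_0(\RR)$ and $g(u)=|u|^2u-\i f$ is a polynomial in $(u,\bar u)$, the map $\tilde u\mapsto g(\tilde u+u^\infty_{\varepsilon,\kappa})-g(u^\infty_{\varepsilon,\kappa})$ is smooth from $H^2_{even}(\RR;\CC)$ into $L^2_{even}(\RR;\CC)$ — it has no constant term, hence lies in $L^2$, and evenness is inherited from $\tilde u$ and the constant $u^\infty_{\varepsilon,\kappa}$. Thus $G$ is $C^1$, and $G(\tilde u_i,0,0)=0$ by Proposition~\ref{prop:continuation}.

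The crux is to show that the partial derivative
\[
  D_{\tilde u}G(\tilde u_i,0,0)=-d\frac{d^2}{dx^2}+\tilde\zeta-Dg(u_i)=:L_i\colon H^2_{even}(\RR;\CC)\to L^2_{even}(\RR;\CC)
\]
is a topological isomorphism. Since $u_i=\i v_i$ is purely imaginary, $Dg(u_i(x))$ is for every $x$ a real symmetric (in fact diagonal) matrix, so $L_i$ is self-adjoint on $L^2(\RR;\CC)$; as the potential $Dg(u_i(\cdot))$ is even, the even subspace is invariant and $L_i$ is self-adjoint on $L^2_{even}(\RR;\CC)$. Proposition~\ref{prop:nondeg} together with Remark~\ref{remark_on_nondeg} gives $\operatorname{ker}_{H^2_{even}(\RR;\CC)}L_i=\{0\}$. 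For surjectivity, note that $u_i$ is homoclinic to the saddle $\i v^{(2)}$, so $u_i(x)\to\i v^{(2)}$ as $|x|\to\infty$ and $Dg(u_i(\cdot))-Dg(\i v^{(2)})$ is multiplication by a matrix-valued $C_0(\RR)$-function; hence $L_i$ is a relatively compact perturbation of the constant-coefficient operator $L_i^\infty:=-d\frac{d^2}{dx^2}+\tilde\zeta-Dg(\i v^{(2)})$, and $\sigma_{\mathrm{ess}}(L_i)=\sigma(L_i^\infty)$. The Fourier symbol of $L_i^\infty$ is $dk^2\,\Id+\operatorname{diag}\bigl(\tilde\zeta-(v^{(2)})^2,\ \tilde\zeta-3(v^{(2)})^2\bigr)$, $k\in\RR$, so $\sigma(L_i^\infty)=[\tilde\zeta-3(v^{(2)})^2,\infty)\subset(0,\infty)$ because $d>0$ and $(v^{(2)})^2<\tilde\zeta/3$. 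In particular $0\notin\sigma_{\mathrm{ess}}(L_i)$, so $L_i$ is Fredholm, and of index $0$ by self-adjointness; combined with its trivial kernel this makes $L_i$ bijective, hence an isomorphism by the open mapping theorem.

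With $G\in C^1$, $G(\tilde u_i,0,0)=0$, and $D_{\tilde u}G(\tilde u_i,0,0)$ an isomorphism, the Implicit Function Theorem yields open neighborhoods $U_i\ni\tilde u_i$ in $H^2_{even}(\RR;\CC)$ and $J_i\ni(0,0)$ in $\RR\times\RR$ together with a unique $C^1$ map $(\varepsilon,\kappa)\mapsto\tilde u(\varepsilon,\kappa)\in U_i$ with $\tilde u(0,0)=\tilde u_i$ solving $G(\tilde u(\varepsilon,\kappa),\varepsilon,\kappa)=0$, i.e.\ \eqref{eq:rewrite}, and with no other solutions in $U_i\times J_i$. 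This is exactly Theorem~\ref{thm:continuation_of_homoclinics}; together with the rescaling $a(x)=\varepsilon^{-1/2}u(\varepsilon^{-1/2}x)$ and the fact that $\|\tilde u(\varepsilon,\kappa)\|_{H^2}$ stays bounded away from $0$ near the origin, it then yields Theorem~\ref{thm:continuation}, since the rescaling amplifies the $H^2$-norm by a factor diverging as $\varepsilon\to 0$ uniformly in $\kappa$.

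I expect the main obstacle to be the surjectivity of $L_i$: Proposition~\ref{prop:nondeg} only delivers a trivial kernel, which on the unbounded line $\RR$ does not by itself imply invertibility, so the Fredholm/essential-spectrum argument above is indispensable. This is precisely where $d>0$ and the saddle character of the asymptotic equilibrium $\i v^{(2)}$ enter — they place $0$ strictly below $\sigma_{\mathrm{ess}}(L_i)$ — and correspondingly the reason the method breaks down for normal dispersion $d<0$. A minor point requiring care is that restricting the self-adjoint $L_i$ to the even subspace preserves self-adjointness and does not enlarge the essential spectrum.
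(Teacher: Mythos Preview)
Your proof is correct and follows exactly the paper's approach: define the nonlinear map from $H^2_{even}(\RR;\CC)\times\RR^2$ to $L^2_{even}(\RR;\CC)$, invoke Proposition~\ref{prop:nondeg} and Remark~\ref{remark_on_nondeg} for the trivial kernel of the linearization, use the Fredholm index-$0$ property to conclude invertibility, and apply the Implicit Function Theorem. The paper asserts the Fredholm property in one line, whereas you supply the justification via the essential-spectrum computation of the asymptotic constant-coefficient operator --- a welcome addition, not a different route.
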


\begin{proof}
We define $F:H^2_{even}(\RR;\CC)\times\RR\times\RR\to L^2_{even}(\R;\C)=\{\tilde u\in L^2(\R;\C): \tilde u(-x)=\tilde u(x) \text{ for a.a. } x\in\R\}$ by
\begin{align*}
F(\tilde{u},\varepsilon,\kappa):=-d\tilde{u}''+\tilde{\zeta}\tilde{u}-\varepsilon\mathrm i\tilde{u}-(1+\mathrm i \kappa)(g(\tilde{u}+u_{\varepsilon,\kappa}^\infty)-g(u_{\varepsilon,\kappa}^\infty)).
\end{align*}
Then we have $F(\tilde{u}_i,0,0)=0$ by definition of $\tilde{u}_i$ and $\frac{\partial F}{\partial\tilde
u}(\tilde u_i,0,0) =-d\frac{d^2}{dx^2}+\tilde{\zeta}-Dg(u_i): H^2_{even}(\RR;\CC)\to L^2_{even}(\R;\C)$. Due
to Remark~\ref{remark_on_nondeg} we know that $\operatorname{ker}_{H^2_{even}(\RR;\CC)}(\frac{\partial
F}{\partial\tilde u}(\tilde u_i,0,0))=\{0\}$ for $i=1,2$. Since $\frac{\partial F}{\partial\tilde u}(\tilde
u_i,0,0)$ is a Fredholm operator of index $0$, it has a bounded inverse and thus the statement of the theorem
follows from the implicit function theorem.
\end{proof} 

\begin{remark}
Let us denote one of the two solution families of  Theorem~\ref{thm:continuation_of_homoclinics} by
$u_{\varepsilon,\kappa}$. Taking into account the  rescaling
$a_{\varepsilon,\kappa}(x)=\varepsilon^{-1/2}u_{\varepsilon,\kappa}(\varepsilon^{-1/2}x)$ we  have proved
Theorem~\ref{thm:continuation}. Moreover,
\begin{align*}
\Big\Vert a_{\varepsilon,\kappa}-\lim_{\abs{x}\to\infty} a_{\varepsilon,\kappa}(x)\Big\Vert_{H^2}
\geq\varepsilon^{-1/4}\Big\Vert
u_{\varepsilon,\kappa}-\lim_{\abs{x}\to\infty}u_{\varepsilon,\kappa}(x)\Big\Vert_{H^2}\to \infty
\end{align*}
for $\varepsilon\to 0$ uniformly with respect to $\kappa$. 
\end{remark}

We finish our discussion with a brief analysis of the case $d<0$ (normal dispersion). Here, we also consider the rescaled equation \eqref{eq:onR} and write it in the form
\begin{equation} \label{eq:onR_dark}
  -|d|u''+(\varepsilon \mathrm i-\tilde{\zeta})u+(1+\mathrm i\kappa)\abs{u}^2u-\mathrm i\tilde{f}=0 \quad
  \text{on }\mathbb{R}, \qquad u'(0)=0.
\end{equation}
Starting with $\varepsilon=\kappa=0$ we consider purely imaginary solutions. The equilibria in the phase plane for \eqref{eq:transf} are the same as before, but due to $d<0$ their character changes. 
The eigenvalues of the linearization are now given by 
$$
\lambda_{1,2}^{(j)}=\pm\i\sqrt{-\Delta^{(j)}}/\sqrt{|d|}\quad\text{with }\Delta^{(j)}:=-\tilde{\zeta}+3({v^{(j)}})^2
$$
for $j=1,2,3$. Now we have a center for $j=2$ and two unstable saddles for $j=1,3$. The unstable saddles are connected by two heteroclinic solutions. Going back to \eqref{eq:onR_dark} we have for $\varepsilon=0$ two heteroclinic solutions $u_1, u_2$ with  $\Im(u_1')>0$ and $\Im(u_2')<0$ on $\R$. Moreover $\lim_{x\to \infty} u_1(x) = \lim_{x\to-\infty} u_2(x)= u^{(3)}=\i v^{(3)}$, $\lim_{x\to-\infty} u_1(x)=\lim_{x\to\infty} u_2(x)=u^{(1)}=\i v^{(1)}$. For $\varepsilon, \kappa>0$ the unstable saddles persist and one might try to continue the heteroclinic solutions $u_1, u_2$ into the range $\varepsilon, \kappa>0$. Let us explain why the previous continuation argument fails in the case of $u_1$ (the argument for $u_2$ is the same). One could seek for heteroclinic solutions of the form 
$$
u= \tilde u + \psi_{\varepsilon,\kappa} \;\mbox{ with } \tilde u \in H^2(\R)
$$
and where $\psi_{\varepsilon,\kappa}$ is a smooth given function of $x$, continuous in $\varepsilon, \kappa$ with 
$$
\psi_{0,0}=u_1 \quad\mbox{and}\quad \lim_{x\to \infty} \psi_{\varepsilon,\kappa}(x) =
u^{(3)}_{\epsilon,\kappa},\;
\lim_{x\to-\infty} \psi_{\varepsilon,\kappa}(x) =u^{(1)}_{\epsilon,\kappa} 
$$
where $u^{(j)}_{\varepsilon,\kappa}$ are the continuations of the purely imaginary zeros $u^{(j)}$ of \eqref{eq:alg} into the range $\varepsilon,\kappa>0$. The implicit function continuation argument applied to 
$$
F(\epsilon,\kappa,\tilde u) = -|d| (\tilde u+\psi_{\varepsilon,\kappa})'' +\epsilon\i(\tilde u+\psi_{\varepsilon,\kappa})- g( \tilde u+\psi_{\varepsilon,\kappa})
$$
would then provide $\tilde u$ as a function of $\varepsilon$ and $\kappa$. Due to $\psi_{0,0}=u_1$ we have $F(0,0,0)=0$ and 
the linearized operator is given by $\frac{\partial F}{\partial \tilde u}(0,0,0)=-|d|\frac{d^2}{dx^2}-\tilde{\zeta}+Dg(u_1): H^2(\R)\to L^2(\R)$. Now there is the question of nondegeneracy of $u_1$. Since $u_1$ is purely imaginary, $\frac{\partial F}{\partial \tilde u}(0,0,0)$ decouples into two real-valued, selfadjoint operators
\begin{align}
L_1 := &\Bigl(-|d|\frac{d^2}{dx^2}-\tilde{\zeta}+v_1^2(x)\Bigr): H^2(\R)\to L^2(\R), \label{eq:sys1_dark}\\
L_2 := &\Bigl(-|d|\frac{d^2}{dx^2}-\tilde{\zeta}+3v_1^2(x)\Bigr): H^2(\R)\to L^2(\R). \label{eq:sys2_dark}
\end{align}
Since $v^{(j)}$ solves $(-\tilde\zeta +v^2)v=f>0$ for $j=1,2,3$ and $v^{(1)}<0<v^{(3)}$ we see that $-\tilde\zeta +\lim_{x\to-\infty} v_1^2(x) = -\tilde\zeta + (v^{(1)})^2<0$ and $-\tilde\zeta +\lim_{x\to-\infty} v_1^2(x) = -\tilde\zeta + (v^{(3)})^2>0$. Hence we get for the essential spectrum of $L_1$ the relation
$$
\sigma_{ess}(L_1) = [-\tilde\zeta + (v^{(1)})^2,\infty)
$$
and $0\in \sigma_{ess}(L_1)$. Unlike in the case of $d>0$ , $L_1$ is not a Fredholm operator and the non-degeneracy of the heteroclinic solution fails for $d<0$.

\bibliographystyle{plain}
\bibliography{bibliography}

\section*{Acknowledgments}

We gratefully acknowledge financial support by the Deutsche Forschungs\-gemeinschaft (DFG) through CRC 1173.

\end{document}